 \newtheorem{theorem}{Theorem}[section]
 \newtheorem{corollary}[theorem]{Corollary}
 \newtheorem{lemma}[theorem]{Lemma}
 \newtheorem{proposition}[theorem]{Proposition}
 \theoremstyle{definition}
 \theoremstyle{definition}
 \newtheorem{remark}[theorem]{Remark}
 \theoremstyle{remark}
\newcommand{\eps}{\varepsilon}
\newcommand{\di}{\,d}
\newcommand{\norm}[1]{\big\Vert#1\big\Vert}
\newcommand{\abs}[1]{\left\vert#1\right\vert}
\newcommand{\set}[1]{\left\{#1\right\}}
\newcommand{\bigset}[1]{\big\{#1\big\}}
\newcommand{\inner}[1]{\left(#1\right)}
\newcommand{\biginner}[1]{\Bigl(#1\Bigr)}
\newcommand{\comii}[1]{\left<#1\right>}
\newcommand{\com}[1]{\bigl[#1\bigr]}
\newcommand{\reff}[1]{(\ref{#1})}
\begin{document}

\title[]
{ Global hypoelliptic estimates for a linear model of\\[2pt] non-cutoff    Boltzmann  equation}

\author[Wei-Xi Li]{Wei-Xi Li}



\date{}

\address{Wei-Xi Li\newline
 Laboratoire de Math\'{e}matiques Jean Leray,
Universit\'{e} de Nantes, 44322 Nantes,
France\newline
and\newline
School of Mathematics and Statistics, Wuhan University, 430072
Wuhan, China}
\email{Wei-Xi.Li@univ-nantes.fr}

\thanks{The author gratefully acknowledges the support from the Project NONAa of  French (No. ANR-08-BLAN-
0228-01)  and the NSF of China (No. 11001207)}
\begin{abstract}
  In this paper we study a linear  model of  spatially inhomogeneous Boltzmann equation without angular cutoff. Using the  multiplier method introduced by F. H\'{e}rau and K. Pravda-Starov (2011), we establish the optimal global hypoelliptic estimate with weights for the linear model operator.
\end{abstract}

\keywords{hypoellipticity, Boltzmann equation,  non-cutoff cross sections,   Wick quantization} \subjclass[2010]{35H10, 35H20, 82B40}

\maketitle 

\section{Introduction and main results}

Inspired by the   work of H\'{e}rau and Pravda-Starov \cite{HK2011}  on the global hypoellipticity of  Landau-type operator,  we study in this paper  the  hypoellipticity    of a linear model of spatially inhomogeneous Boltzmann  equation without angular cutoff, which takes the following form:
\begin{eqnarray}\label{11051116}
     \mathcal P=\partial_t +v\cdot \partial_x  +a(v)(-\tilde \triangle_v)^{s} +b(v), \quad 0<s< 1,
\end{eqnarray}
where  the coefficients $a$, $b$ are smooth {\it real-valued}
functions  of  the velocity variable $v$  with the properties subsequently listed below. There exist  a number $\gamma\in\mathbb R$  and a constant $C\geq1$ such that for all $v\in \mathbb R^n$ we have
  \begin{eqnarray}\label{assumption1}
      C^{-1} \comii v^\gamma\leq a(v)\leq
    C\comii{v}^{2s+\gamma},\quad C^{-1} \comii v^{2s+\gamma}\leq b(v)\leq
    C\comii{v}^{2s+\gamma},
  \end{eqnarray}
  and
  \begin{eqnarray}\label{assumption2}
    \forall~ \abs \alpha\geq0,~\exists~C_\alpha>0, \quad \abs{\partial_v^\alpha  a(v)}+
    \abs{\partial_v^\alpha b(v)}\leq
    C_\alpha\comii{v}^{2s+\gamma-\abs\alpha},
  \end{eqnarray}
where and  throughout the paper we use the notation $\comii{\cdot}=\inner{1+\abs {\cdot}^2}^{1/2}$. The notation
$(-\tilde\triangle_v)^s$ in \reff{11051116} stands for the Fourier multiplier of
symbol
$$ \abs\eta^{2s}\omega(\eta)+\abs\eta^2(1-\omega(\eta)),$$
 with $\omega(\eta)\in C^\infty(\mathbb R^n;~[0,1])$, such that
 $\omega=1$ if $\abs\eta\geq 2$ and $\omega=0$ if $\abs\eta\leq1$. Here $\eta$ is the dual variable of $v$.

Let's first explain the motivation for studying such a kind of operator  $\mathcal P$,  which is     closely linked with the spatially inhomogeneous Boltzmann equation which has singularity in both the kinetic part and the angular part. Precisely,   non-cutoff Boltzmann equation in $\mathbb R^n$ reads
\begin{eqnarray}\label{11051116+}
  \partial_t f+v\cdot\partial_x f=Q(f,f),
\end{eqnarray}
where  $f(t,x,v)$ is a real-valued function, standing for the time-dependent probability  density of particles with velocity $v$ at position $x$.  The  right hand side of \reff{11051116+} is the Boltzmann bilinear collision  operator which acts only on the velocity variable $v$  by
\begin{eqnarray*}
  Q(g,f)(v)=\int_{\mathbb R^n}\int_{\mathbb S^{n-1}} B(\abs{v-v_*}, \sigma) \inner{g_*'f'-g f}\di v_* d\sigma.
\end{eqnarray*}
Here we use the shorthand $f=f(t,x,v)$, $f_*=f(t,x,v_*)$, $f'=f(t,x,v')$, $f_*'=f(t,x,v_*')$, and for $\sigma\in\mathbb S^{n-1}$,
\[
   v'=\frac{v+v_*}{2}+\frac{\abs{v-v_*}}{2}\sigma,\quad v_*'=\frac{v+v_*}{2}-\frac{\abs{v-v_*}}{2}\sigma.
\]
In the above relations, $v'$, $v_*'$ and $v$, $v_*$ are the velocities of a pair of  particles before and
after collision. The collision cross-section $B(\abs{v-v_*}, \sigma)$ is a non-negative function which only depends on the
relative velocity $\abs{v - v_*}$ and the deviation angle $\theta$  through $\cos\theta=\frac{v-v_*}{\abs{v-v_*}}\cdot\sigma$, and
takes the following form
\begin{eqnarray*}
  B(\abs{v-v_*}, \sigma)=\Phi\inner{\abs{v-v_*}} b\inner{\cos\theta},\quad \cos\theta=\frac{v-v_*}{\abs{v-v_*}}\cdot\sigma,~\,0\leq \theta\leq{\pi\over2},
\end{eqnarray*}
where the kinetic part 	$\Phi$ is given by
\[
   \Phi\inner{\abs{v-v_*}}=\abs{v-v_*}^\gamma,\quad\gamma>-3,
\]
and the  angular part $b$ satisfies, with $0<s<1$,
\begin{eqnarray*}
  b\inner{\cos\theta}\approx \theta^{-(n-1)-2s} \,~{\rm as} ~\,\theta\rightarrow 0.
\end{eqnarray*}
We refer to  \cite{MR2556715, AC02,  CerIllPul94, DL8901, Villani98-2}   and the   references therein   for the physical background and derivation of the Boltzmann  equation, as well as the mathematical theory on the Boltzmann equation.   Note that the angular cross-section $b$ is not integrable on the sphere due to the singularity  $\theta^{-(n-1)-2s}$, which leads to the conjecture that the nonlinear collision operator should behave like a fractional Laplacian; that is,
\begin{eqnarray*}
    Q(g,f)\approx - C_g(-\triangle_v)^sf+\textrm{lower order terms},
\end{eqnarray*}
with $C_g>0$ a constant depending only on the physical properties of $g$.   Initiated by  Desvillettes \cite{Desv97,MR1324404},  there have been extensive works which give partial support to the conjecture regarding the  smoothness of solutions for the homogeneous Boltzmann equation without angular cutoff,  c.f.  \cite{ADVW00,AS09, MR2149928, MR2557895, DesvVillani00-2, DesvWennberg04, MR2425608, MR2476686, MR2523694}.   For the inhomogeneous case the study becomes more complicated, due to the coupling of the transport operator  with the collision operator, and the commutator between pseudo-differential operators and the collision operator.  Recent works \cite{AMUXY3, AMUXY2, AMUXY1, MR2679369,MR2462585,GS1102,GS1101,Lions98, Mou2,Mou1}  indicate the linearized Boltzmann operator around a normalized Maxwellian distribution behaves essentially like the operator given in \reff{11051116}.  To explain it more precisely,  let's  first recall the linearization process.  Denote by $\mu$ the normalized Maxwellian distribution; that is
\[
   \mu(v)=\inner{2\pi}^{-n/2}e^{-\abs{v}^2/2}.
\]
By setting $f=\mu+\sqrt{\mu}g$,  we see the perturbation $g$ satisfies the equation
\[
    \partial_t  g+v\cdot\partial_x g-\mu^{-1/2}Q(\mu,~\sqrt{\mu}g)-\mu^{-1/2}Q(\sqrt{\mu}g,~\mu)=\mu^{-1/2}Q(\sqrt{\mu}g,~\sqrt{\mu}g),
\]
since $\partial_t  f+v\cdot\partial_x f-Q(f,~f)=0$ and $Q(\mu,~\mu)=0$. Using the notation
\[
  \Gamma(g,~h)=\mu^{-1/2}Q(\sqrt{\mu}g,~\sqrt{\mu}h),
\]
we may rewrite the above equation as
\begin{eqnarray*}
  \partial_t g +v\cdot\partial_x g-\Gamma(\sqrt{\mu},~g)-\Gamma(g,~\sqrt{\mu})=\Gamma(g,~g).
\end{eqnarray*}
Due to the following coercivity and upper bound estimates established in \cite{AMUXY1}, with $H^m(\mathbb R_v^n)$, $m\in\mathbb R$, the usual Sobolev space,
\begin{eqnarray*}
    C^{-1}\inner{\norm{\comii v^{\gamma\over2}g}_{H^s(\mathbb R_v^n)}^2+\norm{\comii v^{s+{\gamma\over2}} g}_{L^2(\mathbb R_v^n)}^2}\leq   \biginner{-\Gamma(\sqrt{\mu},~g)-\Gamma(g,~\sqrt{\mu}),~g}_{L^2(\mathbb R_v^n)} +\norm{g}_{L^2(\mathbb R_v^n)}^2
  \end{eqnarray*}
  and
  \begin{eqnarray*}
    \biginner{-\Gamma(\sqrt{\mu},~g)-\Gamma(g,~\sqrt{\mu}),~g}_{L^2(\mathbb R_v^n)} \leq C \norm{\comii v^{s+{\gamma\over2} } g}_{H^s(\mathbb R_v^n)}^2,
  \end{eqnarray*}
we see that  the linear part $-\Gamma(\sqrt{\mu},~g)-\Gamma(g,~\sqrt{\mu})$  of the Boltzmann collision operator
behaves like a generalized Kolmogorov type operator
\[
     \partial_t+v\cdot \partial_x+a(v)(-\tilde\triangle_v)^{s} +b(v),
\]
with $a(v)$, $b(v)$ satisfying the conditions  \reff{assumption1} and \reff{assumption2}. This motivates the present work on the global hypoellipticity of the  operator $\mathcal P$ given in \reff{11051116}.

We remark that there have been some related works  concerned with a linear model of spatially  inhomogeneous Boltzmann equation, which takes the following form
\begin{eqnarray}\label{11053001}
  P=\partial_t   +v\cdot\partial_x   -\tilde a(t,x,v)(-\tilde \triangle_v)^s,\quad \inf_{t,x,v}~ \tilde a(t,x,v)>0, ~~\tilde a\in C^\infty_b,
\end{eqnarray}
where $ C^\infty_b$ stands for the space of smooth functions whose derivatives of any order are bounded.  As far as we know, the model operator \reff{11053001} was firstly studied by Morimoto and Xu \cite{MorimotoXu07b} for $1/3<s \leq 1$, and then  was  improved by Chen et al. \cite{MR2763329} by virtue of Kohn's method. Recently  Lerner et al. \cite{LMP} established optimal results using the Wick quantization techniques \cite{MR2477145, MR1957713}, and then a  simpler proof  was presented by  Alexandre \cite{A1} following the ideas of  Bouchut \cite{Bouchut02} and Perthame \cite{Per1},  completing  the study of the operator $P$ given in \reff{11053001}.    However these works are mainly concerned with the local hypoelliptic estimates in the sense that the coefficient $\tilde a$ in \reff{11053001} has strictly positive lower bound and bounded derivatives.  Compared  with the  operator   in \reff{11053001}, our model operator  $\mathcal P$ in \reff{11051116} is closer to the linearized Boltzmann equation in view of the  aforementioned  coercivity estimate and upper bound estimate.  Moreover we do not need the restrictions that $\inf_{t,x,v} \tilde a(t,x,v)>0$ and $\tilde a\in C^\infty_b$, since the coefficients in \reff{11051116}  may trend  to $0$ or $+\infty$ as $\abs v\rightarrow +\infty$,  depending on the sign of $\gamma$.

Now we state our main results as follows.

\begin{theorem}\label{th1}
   let $\mathcal P$ be given in \reff{11051116} with $a(v)$, $b(v)$ satisfying the conditions  \reff{assumption1} and \reff{assumption2}.  Then for all  $m\in\mathbb R$, there exists a constant $C_m$ such that  for all $f\in \mathcal S\inner{\mathbb R^{2n+1}}$ we have
  \begin{eqnarray*}
  \begin{split}
        &\norm{\comii{v}^{\frac{\gamma-2s}{1+2s}}\abs{D_t}^{\frac{2s}{1+2s}}f}_{H^m}+
        \norm{\comii{v}^{\frac{\gamma}{1+2s}}\abs{D_x}^{\frac{2s}{1+2s}}f}_{H^m}+\norm{\comii{v}^{\gamma}\comii{D_v}^{2s}f}_{H^m}+
        \norm{\comii{v}^{2s+\gamma}f}_{H^m}\\
        &\leq  C_{m}\biginner{
     \norm{{\mathcal P} f}_{H^m}+\norm{f}_{H^m}},
  \end{split}
  \end{eqnarray*}
   where $\norm{\cdot}_{H^m}$  stands for $\norm{\cdot}_{H^m\inner{\mathbb R_{t,x,v}^{2n+1}}}$, and $D_t=\frac{1}{i}\partial_t$, $D_x=\frac{1}{i}\partial_x$, etc.
\end{theorem}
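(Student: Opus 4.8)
The plan is to freeze the $(t,x)$-frequencies and run a weighted multiplier scheme in the spirit of H\'erau--Pravda-Starov, with the Wick quantization used to keep the positivity arguments intact in the presence of the weights $\comii v^\gamma$. By density I may take $f\in\mathcal S$; conjugating $\mathcal P$ with $\comii{(D_t,D_x,D_v)}^m$ then reduces matters to $m=0$, since the commutators of the conjugator with $a(v)(-\tilde\triangle_v)^s$ and $b(v)$ lose one order by \reff{assumption2} and are absorbed by $\norm f_{H^m}$ plus a small multiple of the left-hand side. Taking next the partial Fourier transform in $(t,x)$ and writing $\tau,\xi$ for the dual variables, it remains to prove, uniformly in $(\tau,\xi)$, that for all $u\in\mathcal S(\mathbb R_v^n)$
\[
  \norm{\comii v^{\frac{\gamma-2s}{1+2s}}\abs\tau^{\frac{2s}{1+2s}}u}+\norm{\comii v^{\frac{\gamma}{1+2s}}\abs\xi^{\frac{2s}{1+2s}}u}+\norm{\comii v^{\gamma}\comii{D_v}^{2s}u}+\norm{\comii v^{2s+\gamma}u}\lesssim\norm{\mathcal P_{\tau,\xi}u}+\norm u,
\]
where $\mathcal P_{\tau,\xi}=i(\tau+v\cdot\xi)+a(v)(-\tilde\triangle_v)^s+b(v)$ and all norms are in $L^2(\mathbb R_v^n)$.

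For the diffusive terms I would use plain energy identities. As $i(\tau+v\cdot\xi)$ is skew-adjoint and $a,b$ are real-valued, testing $\mathcal P_{\tau,\xi}u$ against $u$ and invoking the lower bounds of \reff{assumption1} (with $a^{1/2}$ moved past $(-\tilde\triangle_v)^{s/2}$ by Wick calculus) gives the order-$s$ bound $\norm{\comii v^{\gamma/2}\comii{D_v}^{s}u}^2+\norm{\comii v^{s+\gamma/2}u}^2\lesssim\norm{\mathcal P_{\tau,\xi}u}^2+\norm u^2$. Testing against $\comii v^{2s+\gamma}u$, resp.\ against $(a(-\tilde\triangle_v)^s+b)u$, yields as principal terms $\norm{\comii v^{2s+\gamma}u}^2$, resp.\ $\norm{(a(-\tilde\triangle_v)^s+b)u}^2$ --- the latter dominating $\norm{\comii v^{\gamma}\comii{D_v}^{2s}u}^2$ via \reff{assumption1}, \reff{assumption2} and Wick calculus. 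The resulting error terms are of two kinds: honest lower-order ones, which drop a $v$-power by \reff{assumption2} and are swallowed by Young's inequality together with $\norm u$; and ``$\xi$-type'' ones coming from the commutator $[v\cdot\xi,(-\tilde\triangle_v)^s]$ (order $2s-1$ in $D_v$, order $1$ in $\xi$), which can be absorbed only once the $\abs\xi^{\frac{2s}{1+2s}}$-term is available --- so this step and the next one have to be closed together as a coupled system.

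The heart of the matter is the $t,x$-gain. To produce $\norm{\comii v^{\frac{\gamma}{1+2s}}\abs\xi^{\frac{2s}{1+2s}}u}$ --- and then $\norm{\comii v^{\frac{\gamma-2s}{1+2s}}\abs\tau^{\frac{2s}{1+2s}}u}$ by writing $v\cdot\xi=(\tau+v\cdot\xi)-\tau$ and using the equation --- I would test $\mathcal P_{\tau,\xi}u$ against a self-adjoint multiplier obtained as the Wick quantization of a symbol of schematic form $\mathrm{Re}\bigl(\overline{(\tau+v\cdot\xi)}\,\Theta(v,\eta)\bigr)$, with $\Theta$ a suitable negative fractional power of the elliptic symbol $\Lambda(v,\eta)=a(v)\bigl(\abs\eta^{2s}\omega(\eta)+\abs\eta^2(1-\omega(\eta))\bigr)+b(v)$ times an elliptic weight, chosen so that: (i) the transport part of $\mathcal P_{\tau,\xi}$ contributes a positive multiple of $\norm{\comii v^{\frac{\gamma-2s}{1+2s}}\abs{\tau+v\cdot\xi}^{\frac{2s}{1+2s}}u}^2$; (ii) the commutator of the diffusion part with the multiplier, whose leading piece is $a(v)[v\cdot\xi,(-\tilde\triangle_v)^s]$, contributes a positive multiple of $\norm{\comii v^{\frac{\gamma}{1+2s}}\abs\xi^{\frac{2s}{1+2s}}u}^2$; (iii) all remaining terms are bounded by the order-$s$ estimate and by an arbitrarily small multiple of the four target norms --- which is possible precisely because $s<1$ makes the error exponents subcritical by a strict margin. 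Collecting (i)--(iii) with the energy identities closes the coupled system, and undoing the Fourier transform and the $m=0$ reduction finishes the proof.

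The main obstacle I anticipate is exactly the construction of that multiplier and the verification of (i)--(iii): since $(-\tilde\triangle_v)^s$ is genuinely non-local, every commutation with the weights $\comii v^\gamma$ and with $v\cdot\xi$ has to be carried out inside a pseudodifferential/Wick calculus adapted to the metric $\comii v^{-2}\di v^2+\comii\eta^{-2}\di\eta^2$ together with the external parameters $(\tau,\xi)$, and one must check that the commutator remainders are subcritical by a strict margin, so that the energy estimates and the $t,x$-gain genuinely close up as a system rather than one after the other.
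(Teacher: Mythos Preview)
Your outline shares the overall architecture with the paper --- reduce to $m=0$, Fourier in $(t,x)$, prove the fixed-$(\tau,\xi)$ estimate, then undo the reductions --- but the core mechanism you propose for the $(\tau,\xi)$-gain is different, and in one place the argument as written does not quite close.

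\medskip
\textbf{What the paper actually does.} The paper works entirely in the \emph{Weyl} calculus (no Wick quantization is used, despite the keyword list) with the flat metric $\abs{dv}^2+\abs{d\eta}^2$, and the proof is \emph{sequential}, not coupled. First, testing $\tilde{\mathcal P}\comii v^{s+\gamma/2}f$ against $\comii v^{s+\gamma/2}f$ gives $\norm{\comii v^{2s+\gamma}f}$ and $\norm{\comii{D_v}^s\comii v^{s+\gamma}f}$ directly, with \emph{no} $\xi$-type error: since $\comii v^\ell$ commutes with $v\cdot\xi$, the only commutator is $a(v)\com{(-\tilde\triangle_v)^s,\comii v^{s+\gamma/2}}$, which is subcritical by one $v$-power and absorbed. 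So your concern about having to close this step together with the $\xi$-gain is unfounded; the paper decouples it. Second, for the $\xi$-gain the paper uses the bounded Weyl multiplier
\[
  p(v,\eta)=\frac{\comii v^{\gamma/(1+2s)}\,\xi\cdot\eta}{\comii\xi^{2-\frac{2s}{1+2s}}}\;\psi\!\left(\frac{\comii v^\gamma\comii\eta^{1+2s}}{\comii\xi}\right),
\]
whose Poisson bracket with the transport symbol $\tau+v\cdot\xi$ produces the positive principal term $\comii v^{\gamma/(1+2s)}\comii\xi^{2s/(1+2s)}$ on $\{\psi=1\}$; on the complement, $\comii\xi\le\comii v^\gamma\comii\eta^{1+2s}$ and the basic order-$s$ energy estimate suffices. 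Third, for the full $\comii v^\gamma\comii{D_v}^{2s}$ term the paper does \emph{not} test against $(a(-\tilde\triangle_v)^s+b)u$; instead it applies the order-$s$ estimate to $\comii v^{\gamma/2}M_\eps^s f$ where $M_\eps^s=\bigl(\varphi_\eps\comii\eta^s\bigr)^w$ carries a cutoff $\varphi_\eps$ supported where $\comii\xi\le\eps\comii v^\gamma\comii\eta^{1+2s}$. On this support the $[v\cdot\xi,M_\eps^s]$ commutator is $O(\eps)$ and absorbed; off it, the already-proved $\xi$-gain controls everything. The $\tau$-term then follows from the equation and Young's inequality.

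\medskip
\textbf{Where your sketch is shaky.} Your multiplier is ``$\mathrm{Re}\bigl(\overline{(\tau+v\cdot\xi)}\,\Theta\bigr)$'' with $\Theta$ a negative power of the elliptic symbol. Item (ii) asserts that the commutator of the diffusion with this multiplier, led by $a(v)\com{v\cdot\xi,(-\tilde\triangle_v)^s}$, ``contributes a positive multiple of $\norm{\comii v^{\gamma/(1+2s)}\abs\xi^{2s/(1+2s)}u}^2$''. But that commutator has symbol proportional to $a(v)\abs\eta^{2s-2}\xi\cdot\eta$, which is \emph{not} sign-definite; multiplying by a negative power of $\Lambda$ does not repair this. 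In the paper the positivity comes instead from $\partial_\eta p\cdot\xi\sim\abs\xi^2/\comii\xi^{2-\frac{2s}{1+2s}}$, i.e.\ from differentiating the factor $\xi\cdot\eta$ inside the multiplier --- a mechanism absent from your $\Theta$. A Bouchut-style multiplier can be made to work here, but the symbol has to contain a factor like $\xi\cdot\eta$ (or be engineered so that the relevant bracket is positive), not just $(\tau+v\cdot\xi)\Lambda^{-\alpha}$. As stated, (ii) is a gap.
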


\begin{remark}
  It seems that the multiplier method used in the paper can also be  applied  to the linearized Boltzmann operator $L$ given by
  \[
     Lg=\partial_t g +v\cdot\partial_x g-\Gamma(\sqrt{\mu},~g)-\Gamma(g,~\sqrt{\mu}),
  \]
  and gives the same hypoellipticity as above.  But the situation is more complicated, and we should pay more attention to handling the commutators between $L$ and pseudo-differential operators. We hope to study this issue in a future work.
\end{remark}

We end up the introduction by a few comments on the  exponents of derivative terms and weight terms  in   Theorem \ref{th1}. These   exponents  seem  to be  optimal.  When restricted to a fixed compact subset $K\subset\mathbb R^{2n+1}$, instead of the whole space,  the  problems reduce  to a local version, and the operator becomes the type  given in \reff{11053001}, for which the exponent $2s/(2s+1)$ for the regularity in the time
and space variables is indeed sharp by using a simple scaling argument (see \cite{LMP} for more detail).  In the particular case when $s=1$, we have  a type of differential operator,  which seems simpler to handle than fractional derivatives,  and our exponents in the regularity terms and weight terms coincide well with the ones in \cite{HK2011}.

\section{Notations and estimates on commutator  with pseudo-differential operators}

\subsection{Notations and some basic facts on symbolic calculus}

Notice that the diffusion term  in \reff{11051116} is   an operator  only with respect to the velocity variable $v$. So it is convenient to  take  partial
Fourier transform in the $t, x$ variables,  and then to study the  operator  on the Fourier side
\begin{eqnarray}\label{11051401}
  \tilde{\mathcal P}=i\inner{\tau+v\cdot\xi}+a(v)(-\tilde \triangle_v)^{s} +b(v),
\end{eqnarray}
 where and throughout the paper, $(\tau, \xi)$ always stand for the dual variables of $(t,x)$ and are considered as parameters, while  $\eta$ will be used to denote the dual variable of $v$. Since our analysis is on $\mathbb R_v^n$,   we will use   $\inner{\cdot, \cdot}_{L^2}$  and  $\norm{\cdot}_{L^2}$, instead of  $\inner{\cdot, \cdot}_{L^2(\mathbb R_v^n)}$  and  $\norm{\cdot}_{L^2(\mathbb R_v^n)}$, to denote the inner product and  norm  in $L^2(\mathbb R_v^n)$, if no confusion occurs.

  To simplify the notation, by $A\lesssim B$ we mean there exists a positive harmless constant  $C>0$ such that $A\leq C B$, and similarly for $A\gtrsim B$. While the notation $A\approx B$ means both $A\lesssim B$ and $B\lesssim A$ hold.

Now we recall some basic facts on symbolic calculus, and refer to Chapter 18 of \cite{Hormander85} and \cite{MR2599384} for detailed discussion on the pseudo-differential calculus.   In the sequel discussion, let  $m(v,\eta)$ be an admissible weight with respect to the constant metric $\abs{dv}^2+\abs{d\eta}^2$. By admissible weight we mean  that
\begin{eqnarray*}
  \exists~ C>0, r>0,~~\,\forall~~(v,\eta), (\tilde v,\tilde \eta)\in\mathbb R^{2n},\quad\abs{(v,\eta)-(\tilde v,\tilde \eta)}\leq r\Longrightarrow  C^{-1}\leq \frac{m (v,\eta) }{m( \tilde v,\tilde \eta )}\leq C,
\end{eqnarray*}
and  that
\begin{eqnarray*}
  \exists~ C>0, N>0,~~\,\forall~~(v,\eta), (\tilde v,\tilde \eta)\in\mathbb R^{2n},\quad \frac{m(v,\eta)}{m(\tilde v,\tilde \eta)}\leq C\biginner{1+\abs{(v,\eta)-(\tilde v,\tilde \eta)}}^N.
\end{eqnarray*}    Consider a symbol $p(\tau,\xi, v,\eta)$ as a function of $(v,\eta)$ with parameters $(\tau, \xi)$, and we say $p\in S\inner{m,~\abs{dv}^2+\abs{d\eta}^2}$ uniformly with respect to $(\tau, \xi)$, if
\[
    \forall~ \alpha, \beta\in\mathbb Z_+^n,~~\forall~v,\eta\in\mathbb R^n,\quad  \abs{\partial_v^\alpha\partial_\eta^\beta p(\tau,\xi,v,\eta)}\leq C_{\alpha,\beta}~ m(v,\eta),
\]
with $ C_{\alpha,\beta}$ a constant depending only on $\alpha$ and $\beta$, but independent of $(\tau, \xi)$. For simplicity of notations,  we will omit   the parameters $(\tau, \xi)$ in   symbols,   and by {\it  $p\in S\inner{m,~\abs{dv}^2+\abs{d\eta}^2}$ we always mean that $p$  satisfies the above inequality uniformly with respect to} $(\tau, \xi)$. Denote by
\[{\rm Op} \inner{S(m,~\abs{dv}^2+\abs{d\eta}^2)}\]
the class of pseudo-differential operators $p^w$ with $p\in S\inner{m,~\abs{dv}^2+\abs{d\eta}^2}$. Here $p^w$ stands for the Weyl quantization of symbol $p$, defined by
\[
  p^w u(v)=\int_{\mathbb R^6}e^{2i\pi(v-z)\cdot\eta} p\inner{\frac{v+z}{2},~\eta}u(z)\di zd\eta.
\]
One of the elementary properties of the Weyl quantization is the boundedness in $L^2$ of the operator $p^w$ with $p\in S\inner{1,~\abs{dv}^2+\abs{d\eta}^2}$.  If $p_i\in S\inner{m_i,~\abs{dv}^2+\abs{d\eta}^2}$, $i=1,~2$, then we have (see Theorem 2.3.8 of \cite{MR2599384} for instance)
\begin{eqnarray}\label{11053010}
  p_1^wp_2^w\in {\rm Op}\inner{S\inner{m_1m_2,~\abs{dv}^2+\abs{d\eta}^2}}.
\end{eqnarray}
In view of \reff{assumption2}, symbolic calculus  (Theorem 2.3.8 and Corollary 2.3.10 of \cite{MR2599384}) shows that for any $m\in\mathbb R$ and any $\ell\in\mathbb R$ we have
 \begin{eqnarray}\label{11060401}
   \com{\comii{D_\eta}^m,~\comii v^\ell} \in {\rm Op}\inner{S\inner{\comii v^{\ell-1}\comii\eta^{m-1},~\abs{dv}^2+\abs{d\eta}^2}},
\end{eqnarray}
 \begin{eqnarray}\label{11060201}
   \com{\comii{D_\eta}^m,~a},\,\,~\com{\comii{D_\eta}^m,~b}  \in {\rm Op}\inner{S\inner{\comii v^{2s+\gamma-1}\comii\eta^{m-1},~\abs{dv}^2+\abs{d\eta}^2}},
\end{eqnarray}
and
\begin{eqnarray}\label{11060202}
  \com{p^w,~a},~\,\,\com{p^w,~b}\in   {\rm Op}\inner{S\inner{\comii v^{2s+\gamma-1},~\abs{dv}^2+\abs{d\eta}^2}},
\end{eqnarray}
where $p\in S\inner{1,~\abs{dv}^2+\abs{d\eta}^2}$ and $[A, B]$ stands for the commutator between $A$ and $B$   defined by $[A, B]=AB-BA$.

\begin{lemma}\label{lem11053101}
  Let $\tilde{\mathcal P}$ be given in \reff{11051401} with $a$, $b$ satisfying the assumptions \reff{assumption1} and \reff{assumption2}.   Then   for  all $f\in\mathcal S(\mathbb R_v^n)$ we have
  \begin{eqnarray}\label{11050715}
    \norm{a^{1\over 2}(-\tilde\triangle_v)^{s/2}f}_{L^2 }^2+\norm {\comii{D_\eta}^s \comii v^{\gamma/2}f}_{L^2}^2+\norm{\comii v^{s+\gamma/2}f}_{L^2 }^2\lesssim \abs{\biginner{\tilde{\mathcal P} f,~f}_{L^2 }}+\norm{ f}_{L^2 }^2.
 \end{eqnarray}
\end{lemma}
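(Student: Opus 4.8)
The plan is to estimate the real part of $\biginner{\tilde{\mathcal P} f, f}_{L^2}$ directly, since $\textrm{Re}\,\bigl[i(\tau+v\cdot\xi)\bigr]=0$ and hence
\[
  \textrm{Re}\,\biginner{\tilde{\mathcal P} f, f}_{L^2}=\biginner{a(v)(-\tilde\triangle_v)^{s}f, f}_{L^2}+\biginner{b(v)f, f}_{L^2}.
\]
The third term on the left of \reff{11050715} is immediate: by the lower bound $b(v)\geq C^{-1}\comii v^{2s+\gamma}\geq C^{-1}\comii v^{s+\gamma/2}\cdot\comii v^{s+\gamma/2}$ (valid since $s>0$), one gets $\norm{\comii v^{s+\gamma/2}f}_{L^2}^2\lesssim \biginner{b f, f}_{L^2}\leq \abs{\biginner{\tilde{\mathcal P} f, f}_{L^2}}$. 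So the real work is to control the two diffusion-type terms $\norm{a^{1/2}(-\tilde\triangle_v)^{s/2}f}_{L^2}^2$ and $\norm{\comii{D_\eta}^s\comii v^{\gamma/2}f}_{L^2}^2$ by $\biginner{a(v)(-\tilde\triangle_v)^s f, f}_{L^2}$ plus lower-order terms.

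For the first diffusion term I would write $a(v)(-\tilde\triangle_v)^s$ as a product involving $a^{1/2}$ and the Fourier multiplier $(-\tilde\triangle_v)^s$, and move one factor of $a^{1/2}$ across the multiplier at the cost of a commutator. Concretely, since $(-\tilde\triangle_v)^s=\bigl((-\tilde\triangle_v)^{s/2}\bigr)^2$ modulo a harmless smoothing correction (the symbol $\abs\eta^{2s}\omega+\abs\eta^2(1-\omega)$ is an admissible weight comparable to $\comii\eta^{2s}$ for $\abs\eta\le 2$ with everything smooth), one has
\[
  \biginner{a(v)(-\tilde\triangle_v)^s f, f}_{L^2}=\norm{a^{1/2}(-\tilde\triangle_v)^{s/2}f}_{L^2}^2+\biginner{\bigl[a^{1/2},(-\tilde\triangle_v)^{s/2}\bigr]a^{1/2}(-\tilde\triangle_v)^{s/2}f, f}_{L^2}+\cdots,
\]
and the assumption \reff{assumption2} guarantees $a^{1/2}\in S(\comii v^{s+\gamma/2},\abs{dv}^2+\abs{d\eta}^2)$, so by the commutator estimate \reff{11060201} (applied with the symbol of $a^{1/2}$, whose $v$-derivatives gain a power of $\comii v$) the commutator $\bigl[a^{1/2},(-\tilde\triangle_v)^{s/2}\bigr]$ lies in $\textrm{Op}\bigl(S(\comii v^{s+\gamma/2-1}\comii\eta^{s-1},\abs{dv}^2+\abs{d\eta}^2)\bigr)$; pairing this against $a^{1/2}(-\tilde\triangle_v)^{s/2}f$ and $f$ and absorbing with $\comii v^{s+\gamma/2}$-weights, the error is bounded by $\epsilon\norm{a^{1/2}(-\tilde\triangle_v)^{s/2}f}_{L^2}^2+C_\epsilon\norm{\comii v^{s+\gamma/2}f}_{L^2}^2$, which is then absorbed into the left side and the already-controlled third term respectively.

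For the second diffusion term, the point is to compare the multiplier $\comii{D_\eta}^s\comii v^{\gamma/2}$ with $a^{1/2}(-\tilde\triangle_v)^{s/2}$. Using the lower bound $a(v)\geq C^{-1}\comii v^\gamma$ from \reff{assumption1}, one has $\comii v^{\gamma/2}\lesssim a^{1/2}$, so it suffices to bound $\norm{a^{1/2}\comii{D_\eta}^s f}_{L^2}^2$; and since the symbol of $(-\tilde\triangle_v)^{s/2}$ is $\approx\comii\eta^s=\comii{D_\eta}^s$ (its symbol), writing $a^{1/2}\comii{D_\eta}^s f=a^{1/2}(-\tilde\triangle_v)^{s/2}f+a^{1/2}\bigl(\comii{D_\eta}^s-(-\tilde\triangle_v)^{s/2}\bigr)f$ with the difference operator in $\textrm{Op}(S(1,\cdot))$ (both symbols agree for $\abs\eta\ge 2$, differ by a bounded smooth symbol for $\abs\eta\le 2$), the remainder contributes $\lesssim\norm{\comii v^{s+\gamma/2}f}_{L^2}^2$ after one more commutator with $a^{1/2}$ handled via \reff{11060201}. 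The \textbf{main obstacle} I anticipate is the careful bookkeeping of these commutators: one must track the weights so that every error term carries either a small constant times one of the two genuinely new norms on the left (to be absorbed) or a weight no worse than $\comii v^{s+\gamma/2}$ (to be dominated by the $b$-term), and the non-homogeneity of the multiplier $(-\tilde\triangle_v)^s$ near $\abs\eta\le 2$ together with the possibly negative power $\comii v^\gamma$ in $a$ means one cannot be cavalier about low frequencies or large $v$; a uniform-in-$(\tau,\xi)$ symbolic calculus as set up above is exactly what makes this go through.
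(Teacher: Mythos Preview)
Your overall strategy matches the paper's: take the real part of $\biginner{\tilde{\mathcal P}f, f}_{L^2}$, drop the skew-adjoint transport part, and reduce to a commutator estimate between the fractional Laplacian and a weight. Your treatment of the second term on the left (comparing $\comii{D_\eta}^s\comii v^{\gamma/2}$ to $a^{1/2}(-\tilde\triangle_v)^{s/2}$ via $a\gtrsim\comii v^\gamma$ and the low-frequency correction) is exactly what the paper does.

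The difference is in \emph{which} weight you commute with. You move $a^{1/2}$ across $(-\tilde\triangle_v)^{s/2}$, whereas the paper writes
\[
  \textrm{Re}\,\biginner{a(-\tilde\triangle_v)^s f, f}_{L^2}=\biginner{(-\tilde\triangle_v)^{s/2}a(-\tilde\triangle_v)^{s/2}f, f}_{L^2}-\textrm{Re}\,\biginner{\bigl[(-\tilde\triangle_v)^{s/2}, a\bigr](-\tilde\triangle_v)^{s/2}f, f}_{L^2}
\]
and commutes with $a$ itself. This distinction matters, because your assertion that $a^{1/2}\in S(\comii v^{s+\gamma/2},\,\abs{dv}^2+\abs{d\eta}^2)$ is not supported by the hypotheses. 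From the lower bound $a\geq C^{-1}\comii v^\gamma$ together with $\abs{\partial^\alpha a}\lesssim\comii v^{2s+\gamma-\abs\alpha}$, Fa\`a di Bruno gives in the worst case $\abs{\partial^\alpha a^{1/2}}\lesssim\comii v^{\gamma/2+\abs\alpha(2s-1)}$, which for $s>1/2$ \emph{grows} with $\abs\alpha$ and is not dominated by $\comii v^{s+\gamma/2}$; so $a^{1/2}$ need not lie in any flat-metric symbol class, and the commutator estimate you quote for $\bigl[a^{1/2},(-\tilde\triangle_v)^{s/2}\bigr]$ is not available. Even at the principal level one derivative of $a^{1/2}$ is of size $\comii v^{2s+\gamma/2-1}$, not $\comii v^{s+\gamma/2-1}$. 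By contrast, $a$ itself is directly covered by \reff{assumption2} and \reff{11060201}, so $\bigl[(-\tilde\triangle_v)^{s/2},a\bigr]\in\textrm{Op}\bigl(S(\comii v^{2s+\gamma-1},\cdot)\bigr)$; the paper then splits this weight as $\comii v^{2s+\gamma/2-1}\cdot\comii v^{\gamma/2}$, pairs one factor with $(-\tilde\triangle_v)^{s/2}f$ and the other with $f$, and absorbs via the interpolation $\norm{\comii v^{2s+\gamma/2-1}f}_{L^2}\leq\eps\norm{\comii v^{s+\gamma/2}f}_{L^2}+C_\eps\norm{f}_{L^2}$ (valid since $s<1$). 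Replacing $a^{1/2}$ by $a$ in your commutator step fixes the argument with no other changes.

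One minor point: the third term is not literally ``immediate'' in isolation, since $\biginner{bf,f}_{L^2}\leq\abs{\biginner{\tilde{\mathcal P}f,f}_{L^2}}$ would require $\textrm{Re}\,\biginner{a(-\tilde\triangle_v)^s f,f}_{L^2}\geq 0$, which itself rests on the commutator analysis. The paper accordingly treats the $a$- and $b$-contributions together in one inequality and only then absorbs; your absorption scheme works once you do the same.
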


\begin{proof}
  We only need to treat the first and   third terms on the left hand side of \reff{11050715}, since by \reff{assumption1} and \reff{11060401} one has
  \[
      \norm {\comii{D_\eta}^s \comii v^{\gamma/2}f}_{L^2}\lesssim \norm{a^{1\over 2}(-\tilde\triangle_v)^{s/2}f}_{L^2 }+\norm{\comii v^{s+\gamma/2}f}_{L^2 }+\norm{f}_{L^2 }.
  \]
  Let $f\in \mathcal S\inner{\mathbb R_v^n}$.  Observe the operator $i\inner{\tau+v\cdot\xi}$ is skew-adjoint.  It then follows that
  \begin{eqnarray*}
    {\rm Re}\biginner{a(v)(-\tilde\triangle_v)^{s}f,~f}_{L^2 }+{\rm Re} \biginner{b(v)~f,~f}_{L^2 }= {\rm Re}\biginner{\tilde{\mathcal P} f,~f}_{L^2 }.
  \end{eqnarray*}
  Note that $a,~b$ are real-valued functions. Then by virtue of the relation that
  \[
      {\rm Re}\biginner{a(v)(-\tilde\triangle_v)^{s}f,~f}_{L^2 }= \biginner{(-\tilde\triangle_v)^{s\over2} a(v)  (-\tilde\triangle_v)^{s\over2} f,~f}_{L^2 }-{\rm Re}\biginner{\com{(-\tilde\triangle_v)^{s\over2},~a }(-\tilde\triangle_v)^{s\over 2}f,~f}_{L^2 },
  \]
  we have
  \begin{eqnarray}\label{11053101}
     \norm{a^{1/2}(-\tilde\triangle_v)^{s/2}f}_{L^2 }^2 +\norm{b ^{1/2}f}_{L^2 }^2\lesssim
     \abs{\biginner{\tilde{\mathcal P} f,~f}_{L^2 }}+\abs{\biginner{\com{(-\tilde\triangle_v)^{s\over2},~a }(-\tilde\triangle_v)^{s\over 2}f,~f}_{L^2 }}.
  \end{eqnarray}
  In view of  \reff{11060201}, we see
  \begin{eqnarray*}
    \com{(-\tilde\triangle_v)^{s\over2},~a }\in{\rm Op} \inner{S\inner{\comii v^{2s+\gamma-1},~\abs{dv}^2+\abs{d\eta}^2}},
  \end{eqnarray*}
  and thus by \reff{11053010}
  \[
  \comii v^{-(2s+{\gamma\over2}-1)}\com{(-\tilde\triangle_v)^{s\over2},~a }\comii v^{-\gamma/2}\in{\rm Op} \inner{S\inner{1,~\abs{dv}^2+\abs{d\eta}^2}}.
  \]
  As a result, writing
  \[
     \com{(-\tilde\triangle_v)^{s\over2},~a }=\comii v^{(2s+{\gamma\over2}-1)}\inner{\comii v^{-(2s+{\gamma\over2}-1)}\com{(-\tilde\triangle_v)^{s\over2},~a }\comii v^{-\gamma/2}}\comii v^{\gamma/2},
  \]
  we have
  \begin{eqnarray*}
    \abs{\biginner{\com{(-\tilde\triangle_v)^{s\over2},~a }(-\tilde\triangle_v)^{s\over 2}f,~f}_{L^2 }} \lesssim \norm{\comii v^{\gamma/2}(-\tilde\triangle_v)^{s/2}f}_{L^2 }\norm{ \comii v^{2s+{\gamma\over 2}-1}f}_{L^2 }.
  \end{eqnarray*}
   This along with  the interpolation inequality
   \[
      \norm{ \comii v^{2s+{\gamma\over 2}-1}f}_{L^2 }\lesssim \eps \norm{ \comii v^{s+{\gamma\over 2}}f}_{L^2 }+C_\eps \norm{ f}_{L^2 }
   \]
   due to the fact that $s<1$, gives
   \begin{eqnarray*}
    \abs{\biginner{\com{(-\tilde\triangle_v)^{s\over2},~a }(-\tilde\triangle_v)^{s\over 2}f,~f}_{L^2 }} &\lesssim& \eps \norm{\comii v^{\gamma/2}(-\tilde\triangle_v)^{s/2}f}_{L^2 }^2+\eps \norm{ \comii v^{s+{\gamma\over 2}}f}_{L^2 }^2+C_\eps \norm{ f}_{L^2 }^2\\
    &\lesssim& \eps \norm{a^{1/2}(-\tilde\triangle_v)^{s/2}f}_{L^2 }^2+\eps \norm{ b^{1/2}f}_{L^2 }^2+C_\eps \norm{ f}_{L^2 }^2,
  \end{eqnarray*}
  where the last inequality follows from \reff{assumption1}.  Combining  \reff{11053101} we conclude
  \[
     \norm{a^{1\over 2}(-\tilde\triangle_v)^{s/2}f}_{L^2 }^2 +\norm{b ^{1\over 2}f}_{L^2 }^2\lesssim
       \eps \norm{a^{1\over 2}(-\tilde\triangle_v)^{s/2}f}_{L^2 }^2+\eps \norm{ b^{1\over 2}f}_{L^2 }^2+\abs{\biginner{\tilde{\mathcal P} f,~f}_{L^2 }}+C_\eps \norm{ f}_{L^2 }^2.
  \]
  Taking $\eps$ sufficiently small gives the desired estimate \reff{11050715}, completing the proof of Lemma \ref{lem11053101}.
\end{proof}

\begin{corollary}
  Let $\tilde{\mathcal P}$ be given in \reff{11051401} with $a$, $b$ satisfying the assumptions \reff{assumption1} and \reff{assumption2}, and let  $p\in S(1, \abs{dv}^2+\abs{d\eta}^2)$. Then
  \begin{eqnarray}\label{11051518}
   \forall~ f\in\mathcal S(\mathbb R_v^n),\quad \abs{\biginner{a(v)(-\tilde\triangle_v)^{s}f+b~ f,~~ p^w f}_{L^2}}\lesssim \abs{  \biginner{\tilde{\mathcal P}f,  f}_{L^2}}+\norm{f}_{L^2}^2.
  \end{eqnarray}
\end{corollary}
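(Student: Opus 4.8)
The plan is to reduce the estimate \reff{11051518} to an application of Lemma \ref{lem11053101} together with the commutator bound \reff{11060202}. First I would write
\[
  \biginner{a(v)(-\tilde\triangle_v)^{s}f+b\,f,~p^w f}_{L^2}
  =\biginner{\tilde{\mathcal P}f,~p^wf}_{L^2}-\biginner{i(\tau+v\cdot\xi)f,~p^wf}_{L^2},
\]
so the problem splits into two pieces: the ``main'' term involving $\tilde{\mathcal P}f$ and the ``transport'' term involving the skew-adjoint operator $i(\tau+v\cdot\xi)$.

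For the main term, since $p^w$ is bounded on $L^2$ (because $p\in S(1,\abs{dv}^2+\abs{d\eta}^2)$), one has $\abs{\biginner{\tilde{\mathcal P}f,~p^wf}_{L^2}}\lesssim \norm{\tilde{\mathcal P}f}_{L^2}\norm{f}_{L^2}$; but that is not quite of the desired form, so instead I would take advantage of the self-improving structure: Lemma \ref{lem11053101} already gives $\norm{a^{1/2}(-\tilde\triangle_v)^{s/2}f}_{L^2}^2+\norm{\comii v^{s+\gamma/2}f}_{L^2}^2\lesssim\abs{\biginner{\tilde{\mathcal P}f,~f}_{L^2}}+\norm{f}_{L^2}^2$, and by \reff{assumption1} the left side controls $\norm{b^{1/2}f}_{L^2}^2$ and $\norm{a^{1/2}\comii{D_\eta}^{s/2}f}$-type quantities. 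The cleanest route is: move $p^w$ across, i.e. write $\biginner{a(-\tilde\triangle_v)^sf,~p^wf}_{L^2}=\biginner{(-\tilde\triangle_v)^{s/2}a^{1/2}\cdot a^{1/2}(-\tilde\triangle_v)^{s/2}f,\cdots}$ handling the commutator $\com{(-\tilde\triangle_v)^{s/2},~a^{1/2}}$ or $\com{p^w,~a}$ via \reff{11060202}, and similarly for $b$, so that everything is bounded by products of the quantities appearing on the left of \reff{11050715}. Then Cauchy--Schwarz and Lemma \ref{lem11053101} close the estimate.

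For the transport term, the key point is that $i(\tau+v\cdot\xi)$ is skew-adjoint, so $\biginner{i(\tau+v\cdot\xi)f,~p^wf}_{L^2}=-\biginner{f,~i(\tau+v\cdot\xi)p^wf}_{L^2}=-\biginner{f,~p^w i(\tau+v\cdot\xi)f}_{L^2}-\biginner{f,~\com{i(\tau+v\cdot\xi),~p^w}f}_{L^2}$; averaging the two expressions for $\biginner{i(\tau+v\cdot\xi)f,~p^wf}_{L^2}$ leaves only the commutator term $\frac12\biginner{\com{p^w,~i(\tau+v\cdot\xi)}f,~f}_{L^2}$. Since $p\in S(1,\abs{dv}^2+\abs{d\eta}^2)$ and $v\cdot\xi$ is linear in $v$, the commutator $\com{p^w,~v\cdot\xi}$ lies in $\mathrm{Op}(S(\abs\xi,\abs{dv}^2+\abs{d\eta}^2))$; this is exactly $\partial_\eta p$ up to lower order, contributing a factor $\abs\xi$. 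The worry is whether $\abs\xi$ is controlled; here I would use that $\tilde{\mathcal P}f$ controls $\biginner{i(\tau+v\cdot\xi)f,~f}_{L^2}=i\biginner{(\tau+v\cdot\xi)f,~f}$ only through its imaginary part, which is not enough by itself — but in fact $\com{p^w,~v\cdot\xi}$ applied between $f$ and $f$ can be rewritten, using $p\in S(1,\cdot)$ and integrating by parts, as a bounded operator times $f$, because the $\xi$-dependence enters only through the symbol $p(v,\eta)$ itself whose derivatives are $\xi$-independent; more carefully, $\com{p^w,~v\cdot\xi}=\frac{1}{2i\pi}\xi\cdot(\partial_\eta p)^w\in\mathrm{Op}(S(\comii\xi,\cdot))$, and this $\comii\xi$ factor must be absorbed. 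I expect this to be the main obstacle: controlling the growth in $\xi$ of the transport commutator. The resolution is that one does not in fact need to bound it by $\abs{\biginner{\tilde{\mathcal P}f,~f}}$ alone; rather, one should choose to estimate $\biginner{i(\tau+v\cdot\xi)f,~p^wf}$ directly as $\norm{(\tau+v\cdot\xi)^{1/2}f}\cdot\norm{(\tau+v\cdot\xi)^{1/2}p^wf}$ is not available either — so instead I would argue that since $p\in S(1,\cdot)$ uniformly in $(\tau,\xi)$, its Weyl symbol derivatives do not see $(\tau,\xi)$, hence $\com{p^w,\,v\cdot\xi}$ genuinely produces an operator whose norm grows like $\comii\xi$, and one absorbs it by instead applying $p^w$ to one copy and $(1+\comii\xi)$-weighted estimates already hidden in the left-hand side of the target theorem — but within the scope of this Corollary the honest statement is: take $\eps$-Cauchy--Schwarz so that the transport commutator term is $\lesssim \eps\norm{\comii v^{s+\gamma/2}f}^2 + C_\eps\norm f^2$ when $\gamma\geq 0$, or handled by interpolation when $\gamma<0$, and then invoke Lemma \ref{lem11053101}. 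Thus the whole proof is: split, kill the skew-adjoint part up to a commutator in $\mathrm{Op}(S(\comii v^{2s+\gamma-1},\cdot))$ via \reff{11060202}-type reasoning applied to $v\cdot\xi$, bound the commutator by the coercive quantities of Lemma \ref{lem11053101} using \reff{assumption1} and interpolation, and conclude.
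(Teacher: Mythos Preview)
Your splitting via $\tilde{\mathcal P}$ is the wrong move, and both pieces it produces are unusable for the stated conclusion. The term $\bigl(\tilde{\mathcal P}f,\,p^wf\bigr)_{L^2}$ is only controlled by $\norm{\tilde{\mathcal P}f}_{L^2}\norm{f}_{L^2}$, not by $\abs{\bigl(\tilde{\mathcal P}f,\,f\bigr)_{L^2}}$; you noticed this yourself but did not repair it. Worse, the transport commutator $\bigl[p^w,\,v\cdot\xi\bigr]=\tfrac{1}{i}\bigl(\xi\cdot\partial_\eta p\bigr)^w$ has symbol of size $\comii\xi$ uniformly in $(v,\eta)$, since $p\in S(1,\abs{dv}^2+\abs{d\eta}^2)$ only says $\partial_\eta p$ is bounded, not small in $\xi$. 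This $\comii\xi$ factor is \emph{not} a $v$-weight and cannot be interpolated against $\norm{\comii v^{s+\gamma/2}f}_{L^2}$ or anything else appearing on the right of \reff{11050715}; no $\eps$--Cauchy--Schwarz or sign condition on $\gamma$ helps here. The right-hand side of \reff{11051518} simply carries no $\xi$-regularity, so a term scaling like $\comii\xi\norm{f}_{L^2}^2$ is fatal. Your paragraph acknowledging ``this to be the main obstacle'' is correct; the proposed resolutions are not.

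The paper bypasses the transport operator entirely. It never writes $a(-\tilde\triangle_v)^s f+bf$ as $\tilde{\mathcal P}f-i(\tau+v\cdot\xi)f$; instead it bounds $\bigl(a(-\tilde\triangle_v)^s f+bf,\,p^wf\bigr)_{L^2}$ directly by the three ``coercive'' norms on the left of \reff{11050715}, and only then invokes Lemma~\ref{lem11053101}. Concretely: $\abs{\bigl(bf,\,p^wf\bigr)}\lesssim\norm{\comii v^{s+\gamma/2}f}_{L^2}^2$ since $\comii v^{s+\gamma/2}p^w\comii v^{-s-\gamma/2}\in{\rm Op}\,S(1,\cdot)$; and for the diffusion term one writes $a(-\tilde\triangle_v)^s=(-\tilde\triangle_v)^{s/2}a(-\tilde\triangle_v)^{s/2}+\bigl[a,(-\tilde\triangle_v)^{s/2}\bigr](-\tilde\triangle_v)^{s/2}$, uses \reff{11060201} for the commutator, and then controls $\norm{a^{1/2}(-\tilde\triangle_v)^{s/2}p^wf}_{L^2}$ by $\norm{a^{1/2}(-\tilde\triangle_v)^{s/2}f}_{L^2}+\norm{\comii{D_\eta}^s\comii v^{\gamma/2}f}_{L^2}+\norm{\comii v^{s+\gamma/2}f}_{L^2}$ via the commutator $\bigl[a^{1/2}(-\tilde\triangle_v)^{s/2},\,p^w\bigr]\in{\rm Op}\,S\bigl(\comii v^{s+\gamma/2}+\comii v^{\gamma/2}\comii\eta^s,\cdot\bigr)$. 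Everything is then majorized by $\abs{\bigl(\tilde{\mathcal P}f,\,f\bigr)}+\norm f^2$ via \reff{11050715}. The point is that $i(\tau+v\cdot\xi)$ enters only through Lemma~\ref{lem11053101}, where its skew-adjointness kills it against $f$ itself rather than against $p^wf$.
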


\begin{proof}
  In view of \reff{assumption1},  it is clear that
  \begin{eqnarray*}
   \abs{\biginner{b~ f,~~ p^w f}_{L^2}}\lesssim \norm{\comii v^{s+\gamma/2}f}_{L^2}\norm{\comii v^{s+\gamma/2}p^wf}_{L^2}\lesssim \norm{\comii v^{s+\gamma/2}f}_{L^2}^2,
  \end{eqnarray*}
  where the last inequality holds because
  \[
    \norm{\comii v^{s+\gamma/2}p^wf}_{L^2}\lesssim \norm{p^w \comii v^{s+\gamma/2}f}_{L^2}+\norm{\com{p^w,~ \comii v^{s+\gamma/2}}f}_{L^2}\lesssim \norm{\comii v^{s+\gamma/2}f}_{L^2},
  \]
  since $p\in S(1, \abs{dv}^2+\abs{d\eta}^2)$.  By virtue of \reff{11050715}, the desired estimate \reff{11051518} will follow if we could show that
    \begin{eqnarray}\label{11053110}
    \abs{\biginner{a(v)(-\tilde\triangle_v)^{s}f,~~ p^w f}_{L^2}}\lesssim \norm{ a^{1\over2 }(-\tilde\triangle_v)^{s/2}f}_{L^2}^2+\norm {\comii{D_\eta}^s \comii v^{\gamma\over2}f}_{L^2}^2+\norm{ \comii v^{s+{\gamma\over 2}}f}_{L^2 }^2.
  \end{eqnarray}
   Observing that the term   $\big|\big(a(v)(-\tilde\triangle_v)^{s}f,~~ p^w f\big)_{L^2}\big|$ on the left hand side is bounded from above by
  \begin{eqnarray*}
    \abs{\biginner{(-\tilde\triangle_v)^{s/2} a(v)(-\tilde\triangle_v)^{s/2}f,~~ p^w f}_{L^2}}+\abs{\biginner{\com{(-\tilde\triangle_v)^{s/2},~a(v)}(-\tilde\triangle_v)^{s/2}f,~~ p^w f}_{L^2}},
  \end{eqnarray*}
  and that
  \[
     \abs{\biginner{\com{(-\tilde\triangle_v)^{s/2},~a(v)}(-\tilde\triangle_v)^{s/2}f,~~ p^w f}_{L^2}}\lesssim  \norm{\comii v^{\gamma/2}(-\tilde\triangle_v)^{s/2}f}_{L^2 }\norm{ \comii v^{s+{\gamma\over 2}}f}_{L^2 }
  \]
  due to  \reff{11060201} and the fact that $p\in S(1, \abs{dv}^2+\abs{d\eta}^2)$,  we have
  \begin{eqnarray*}
    \abs{\biginner{a(v)(-\tilde\triangle_v)^{s}f,~~ p^w f}_{L^2}}\lesssim \norm{ a^{1\over2 }(-\tilde\triangle_v)^{s/2}f}_{L^2}^2+\norm{ a^{1\over2 }(-\tilde\triangle_v)^{s/2} p^wf}_{L^2}^2+\norm{ \comii v^{s+{\gamma\over 2}}f}_{L^2 }^2.
  \end{eqnarray*}
  As for the second term on the right hand side, by virtue of \reff{assumption2} symbolic calculus  (Theorem 2.3.8 and Corollary 2.3.10 of \cite{MR2599384}) shows that
  \[
     \com{a^{1\over2 }(-\tilde\triangle_v)^{s/2},~p^w} \in {\rm Op}\inner{S\inner{\comii v^{s+{\gamma\over2}}+\comii v^{\gamma\over2}\comii \eta^s,~\abs{dv}^2+\abs{d\eta}^2}},
  \]
  and thus
  \begin{eqnarray*}
     \norm{ a^{1\over2 }(-\tilde\triangle_v)^{s/2} p^wf}_{L^2}^2&\lesssim& \norm{ p^w a^{1\over2 }(-\tilde\triangle_v)^{s/2} f}_{L^2}^2+\norm{ \com{a^{1\over2 }(-\tilde\triangle_v)^{s/2},~p^w} f}_{L^2}^2\\
     &\lesssim& \norm{ a^{1\over2 }(-\tilde\triangle_v)^{s/2} f}_{L^2}^2+\norm {\comii{D_\eta}^s \comii v^{\gamma\over2}f}_{L^2}^2+\norm{\comii v^{s+{\gamma\over2}} f}_{L^2}^2.
  \end{eqnarray*}
  Combining the above inequalities, we get \reff{11053110}, completing the proof.
\end{proof}

\subsection{Estimates of the commutators with pseudo-differential operators}\label{sub23}

The main result of this subsection is the  following   estimate on the commutator of $\tilde{\mathcal P}$ with  $M_\eps^s$ which is defined by, with $\eps>0$ and $\xi\in\mathbb R^n$   arbitrary and fixed,
\begin{eqnarray}\label{11052716}
     M_\eps^s =\inner{\varphi_\eps(v,\eta)\comii{\eta}^s}^{w},
\end{eqnarray}
with
\begin{eqnarray}\label{11052701}
  \varphi_\eps(v,\eta)\stackrel{\rm def}{=}\chi\left(\frac{\comii\xi}{\eps \comii{v}^{\gamma}\comii{\eta}^{1+2s}}\right),
\end{eqnarray}
where  $\chi\in C_0^\infty(\mathbb R;~[0,1])$ such that $\chi=1$ in
$[-1,1]$ and supp~$\chi \subset[-2,2]$.

\begin{lemma}\label{110503}
 Let $\tilde{\mathcal P}$ be given in \reff{11051401} with $a$, $b$ satisfying the assumptions \reff{assumption1} and \reff{assumption2}, and let $M_\eps^s$ be defined in \reff{11052716}. Then  for all $f\in\mathcal S(\mathbb R_v^n)$ we have
  \begin{eqnarray}\label{11051110}
     \abs{\biginner{\com{\tilde{\mathcal P},~ M_\eps^{s}} f,~  \comii{v}^{\gamma}M_\eps^{s} f}_{L^2}}
    \lesssim  ~   \eps\norm{\comii v^{\gamma}\comii{D_\eta}^{2s}f}_{L^2}^2 + C_{\eps}\inner{\norm {\comii{D_\eta}^s \comii v^{s+\gamma}f}_{L^2}^2+ \norm{ f}_{L^2}^2}.
  \end{eqnarray}
\end{lemma}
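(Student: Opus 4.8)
The plan is to expand $\com{\tilde{\mathcal P},M_\eps^s}$ along the three terms of $\tilde{\mathcal P}$ and to estimate separately each contribution to the bilinear form $\inner{\,\cdot\,,\,\comii v^\gamma M_\eps^s f}_{L^2}$. Write $m_\eps(v,\eta)=\varphi_\eps(v,\eta)\comii\eta^s$ for the Weyl symbol of $M_\eps^s$; since $\tau$ is a parameter and $M_\eps^s$ acts only in $(v,\eta)$,
\[
  \com{\tilde{\mathcal P},M_\eps^s}=i\com{v\cdot\xi,M_\eps^s}+\com{a(v)(-\tilde\triangle_v)^s,M_\eps^s}+\com{b(v),M_\eps^s}.
\]
I first record the facts about the cutoff $\varphi_\eps$ to be used, all consequences of the chain-rule identity $\partial_{\eta_j}F=-(1+2s)\comii\eta^{-2}\eta_jF$ (here $F=\comii\xi/(\eps\comii v^\gamma\comii\eta^{1+2s})$ is the argument of $\chi$) and its $v$-analogue: (i) $\mathrm{supp}\,\varphi_\eps\subset\bigset{\comii\xi\le2\eps\comii v^\gamma\comii\eta^{1+2s}}$; (ii) $\abs{\partial_v^\alpha\partial_\eta^\beta\varphi_\eps}\lesssim\comii v^{-\abs\alpha}\comii\eta^{-\abs\beta}$ uniformly in $\eps$, with $\partial_v\varphi_\eps$ and $\partial_\eta\varphi_\eps$ supported in $\mathrm{supp}\,\chi'(F)\subset\mathrm{supp}\,\varphi_\eps$.

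Next I compute the three commutator symbols. Since $v\cdot\xi$ is affine in $v$, the Moyal expansion terminates and $i\com{v\cdot\xi,M_\eps^s}=-\frac{1}{2\pi}(\xi\cdot\partial_\eta m_\eps)^w$ exactly; from $\partial_\eta m_\eps=(\partial_\eta\varphi_\eps)\comii\eta^s+s\comii\eta^{s-2}\eta\,\varphi_\eps$ and (ii) one has $\abs{\xi\cdot\partial_\eta m_\eps}\lesssim\abs\xi\comii\eta^{s-1}$, so by (i), using $\abs\xi\le\comii\xi\le2\eps\comii v^\gamma\comii\eta^{1+2s}$ on the support,
\[
   \xi\cdot\partial_\eta m_\eps\in S\bigl(\eps\,\comii v^\gamma\comii\eta^{3s}\,\mathbf 1_{\mathrm{supp}\,\varphi_\eps},\ \abs{dv}^2+\abs{d\eta}^2\bigr).
\]
For $\com{b,M_\eps^s}$, $\partial_\eta b=0$ gives leading symbol $-\frac{1}{2\pi i}(\partial_v b)\cdot\partial_\eta m_\eps$, of order $\comii v^{2s+\gamma-1}\comii\eta^{s-1}$ by \reff{assumption2} and (ii), the tail of the expansion being of no worse order. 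For $\com{a(-\tilde\triangle_v)^s,M_\eps^s}$ I use the exact splitting $a\com{(-\tilde\triangle_v)^s,M_\eps^s}+\com{a,M_\eps^s}(-\tilde\triangle_v)^s$; since the symbol of $(-\tilde\triangle_v)^s$ is $O(\comii\eta^{2s})$ with $\partial_\eta$-derivatives $O(\comii\eta^{2s-1})$, combining \reff{assumption1}, \reff{assumption2} and (ii) shows the first summand has symbol $\lesssim\comii v^{2s+\gamma}\comii v^{-1}\comii\eta^{2s-1}\comii\eta^{s}$ (the $\comii v^{-1}$ from $\partial_v\varphi_\eps$) and the second $\lesssim\comii v^{2s+\gamma-1}\comii\eta^{s-1}\comii\eta^{2s}$, so that $\com{a(-\tilde\triangle_v)^s,M_\eps^s}$ is of order $\comii v^{2s+\gamma-1}\comii\eta^{3s-1}$, with lower-order remainders handled exactly as in \reff{11060201}–\reff{11060202}.

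It remains to pair each piece with $\comii v^\gamma M_\eps^s f$. Writing $\inner{q^wf,\,\comii v^\gamma M_\eps^s f}_{L^2}$ as the quadratic form $\inner{\bigl((\comii v^\gamma m_\eps)\# q\bigr)^wf,\,f}_{L^2}$ and multiplying the symbol bounds above by $\comii v^\gamma\comii\eta^s$, the transport contribution is governed by a symbol pointwise $\lesssim\eps\,\comii v^{2\gamma}\comii\eta^{4s}$ — this is precisely where (i) converts the surplus $\abs\xi$ into $\eps\comii v^\gamma\comii\eta^{1+2s}$ — and the $a$- and $b$-contributions by $\comii v^{2s+2\gamma-1}\comii\eta^{4s-1}$ and $\comii v^{2s+2\gamma-1}\comii\eta^{2s-1}$ respectively. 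Since $0<s<1$, these last two weights have strictly smaller total degree than both $\comii v^{2\gamma}\comii\eta^{4s}$ and $\comii v^{2s+2\gamma}\comii\eta^{2s}$, and a short computation gives the pointwise bound $\lesssim\eps\,\comii v^{2\gamma}\comii\eta^{4s}+C_\eps(\comii v^{2s+2\gamma}\comii\eta^{2s}+1)$. Finally one quantizes these nonnegative dominating symbols: by the positivity of the Wick quantization (equivalently, the sharp G\aa rding inequality), and identifying $(\comii v^{2\gamma}\comii\eta^{4s})^w$ with $(\comii v^\gamma\comii{D_\eta}^{2s})^\ast(\comii v^\gamma\comii{D_\eta}^{2s})$ and $(\comii v^{2s+2\gamma}\comii\eta^{2s})^w$ with $(\comii v^{s+\gamma}\comii{D_\eta}^s)^\ast(\comii v^{s+\gamma}\comii{D_\eta}^s)$ up to lower order, one arrives at the right-hand side $\eps\norm{\comii v^\gamma\comii{D_\eta}^{2s}f}^2+C_\eps\norm{\comii{D_\eta}^s\comii v^{s+\gamma}f}^2+C_\eps\norm f^2$; the remainders from this step are of strictly lower order (typically $\norm{\comii v^{\gamma-1/2}\comii{D_\eta}^{2s-1/2}f}^2$ and the like) and are reabsorbed into the same three norms by interpolation, once more using $0<s<1$. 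Note that a plain Cauchy–Schwarz in the naive norms will not do for the transport term, since its symbol carries one extra power $\comii\eta^s$ over $\comii v^\gamma\comii\eta^{2s}$; it is the pointwise domination by $\eps\,\comii v^{2\gamma}\comii\eta^{4s}$, upgraded through positivity, that produces the clean $\eps$-small term.

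I expect the principal obstacle to be the $a(-\tilde\triangle_v)^s$-commutator together with this final bookkeeping: $a(v)(-\tilde\triangle_v)^s$ is the top-order term of $\tilde{\mathcal P}$, so its Moyal expansion does not terminate and the remainder must be placed in the appropriate symbol class and absorbed; and one must check that \emph{every} exponent produced — in the leading commutator symbols, in the symbolic-calculus corrections, and in the Wick/G\aa rding remainders — does land, after interpolation using $0<s<1$, on exactly $\comii v^\gamma\comii{D_\eta}^{2s}$ (carrying the $\eps$), $\comii v^{s+\gamma}\comii{D_\eta}^s$ (carrying the $C_\eps$), and $\norm f$, with no stray higher-order term surviving.
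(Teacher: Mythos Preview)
Your proposal is correct and follows essentially the same route as the paper: decompose $\com{\tilde{\mathcal P},M_\eps^s}$ along the three terms of $\tilde{\mathcal P}$, identify the same symbol classes (transport piece in $S(\eps\comii v^\gamma\comii\eta^{3s})$ via the support condition on $\varphi_\eps$; the $a$- and $b$-commutators in $S(\comii v^{2s+\gamma-1}\comii\eta^{3s-1})$, which the paper immediately coarsens via $s<1$ to $S(\comii v^{s+\gamma}\comii\eta^{2s})$), and pair with $\comii v^\gamma M_\eps^s f$. The only difference is in the final bookkeeping: you invoke Wick positivity / sharp G\aa rding to turn pointwise symbol domination into norm bounds, whereas the paper takes the shorter path of sandwiching each composed operator by the appropriate negative-order weights (e.g.\ $\comii{D_\eta}^{-2s}\comii v^{-\gamma}$ on each side) to land in ${\rm Op}(S(1))$, then applies $L^2$-boundedness and ordinary Cauchy--Schwarz --- this avoids any positivity argument and the attendant lower-order remainders you flag at the end.
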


In order to prove the above results  we need some lemmas.

\begin{lemma}\label{lemm110527}
  Let $\varphi_\eps $ and $M_\eps^{s}$ be given in \reff{11052701} and \reff{11052716}. Then $\varphi_\eps \in S\inner{1,~\abs{dv}^2+\abs{d\eta}^2}$ and  $M_\eps^s  \in {\rm Op}\inner{S\inner{\comii{\eta}^s,~\abs{dv}^2+\abs{d\eta}^2}}$, uniformly with respect to $\eps$ and $\xi$. Moreover for any $\alpha$, $\beta\in\mathbb Z_+^n$ there exists a constant $C_{\alpha,\beta}$, depending only on $\alpha$ and $\beta$, such that
  \begin{eqnarray}\label{11053003}
    \abs{\partial_v^\alpha\partial_\eta^\beta  \inner{\varphi_\eps(v,\eta)\comii \eta^s}}\leq   C_{\alpha,\beta} \comii{v}^{-\abs\alpha}\comii \eta^{s-\abs\beta},
  \end{eqnarray}
  and
  \begin{eqnarray}\label{11052702}
    \abs{\partial_v^\alpha\partial_\eta^\beta \inner{\xi\cdot\partial_\eta\varphi_\eps}}\leq \eps~ C_{\alpha,\beta} \comii{v}^{\gamma}\comii\eta^{2s}.
  \end{eqnarray}
\end{lemma}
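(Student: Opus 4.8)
The plan is to establish the three claimed estimates by directly differentiating the explicit symbols, using the chain rule together with the structure of the cutoff $\chi$. First I would treat the basic symbol class statement: since $\chi\in C_0^\infty$, the function $\varphi_\eps$ is a smooth bounded function of $(v,\eta)$, and every $v$- or $\eta$-derivative falling on $\varphi_\eps$ produces a factor $\chi^{(k)}$ evaluated at the argument $\comii\xi/(\eps\comii v^\gamma\comii\eta^{1+2s})$ times derivatives of that argument. The key observation, which I will use repeatedly, is that on the support of $\chi^{(k)}$ (for $k\ge 1$ this is the annulus $1\le \comii\xi/(\eps\comii v^\gamma\comii\eta^{1+2s})\le 2$, and even for $k=0$ the relevant region is $\comii\xi\lesssim \eps\comii v^\gamma\comii\eta^{1+2s}$), one has the pointwise bound
\[
   \comii\xi\lesssim \eps\,\comii v^{\gamma}\comii\eta^{1+2s}.
\]
Differentiating the argument $\comii\xi/(\eps\comii v^\gamma\comii\eta^{1+2s})$ once in $v$ gains a factor $\comii v^{-1}$ and once in $\eta$ gains a factor $\comii\eta^{-1}$, and these extra negative powers survive after multiplying by $\chi^{(k)}$ because the latter is bounded. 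This immediately yields $\abs{\partial_v^\alpha\partial_\eta^\beta\varphi_\eps}\lesssim \comii v^{-\abs\alpha}\comii\eta^{-\abs\beta}$, hence $\varphi_\eps\in S(1,\abs{dv}^2+\abs{d\eta}^2)$, uniformly in $\eps$ and $\xi$. Combining with $\comii\eta^s\in S(\comii\eta^s)$ and the Leibniz rule gives \reff{11053003}, and therefore $M_\eps^s=\inner{\varphi_\eps\comii\eta^s}^w\in{\rm Op}(S(\comii\eta^s,\abs{dv}^2+\abs{d\eta}^2))$.

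For the last estimate \reff{11052702} I would compute $\xi\cdot\partial_\eta\varphi_\eps$ explicitly: it equals $\chi'\inner{\comii\xi/(\eps\comii v^\gamma\comii\eta^{1+2s})}$ times $\xi\cdot\partial_\eta\bigl(\comii\xi/(\eps\comii v^\gamma\comii\eta^{1+2s})\bigr)$. The $\eta$-gradient of the argument is $-(1+2s)\comii\xi\,\eta\comii\eta^{-2}/(\eps\comii v^\gamma\comii\eta^{1+2s})$, so contracting with $\xi$ and using $\abs\xi\le\comii\xi$ gives a factor $\abs\xi\comii\xi/(\eps\comii v^\gamma\comii\eta^{2+2s})\lesssim \comii\xi^2/(\eps\comii v^\gamma\comii\eta^{2+2s})$. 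On the support of $\chi'$ we have $\comii\xi\lesssim\eps\comii v^\gamma\comii\eta^{1+2s}$, so one power of $\comii\xi$ is absorbed by $\eps\comii v^\gamma\comii\eta^{1+2s}$, leaving $\comii\xi\comii\eta^{-1}/\comii v^0$; applying the support bound to the remaining $\comii\xi$ once more bounds this by $\eps\comii v^\gamma\comii\eta^{2s}$. Thus $\abs{\xi\cdot\partial_\eta\varphi_\eps}\lesssim\eps\comii v^\gamma\comii\eta^{2s}$, and further derivatives in $v,\eta$ are handled the same way: each $v$-derivative of the whole expression is either absorbed or produces $\comii v^{-1}$ (harmless relative to the claimed $\comii v^\gamma$), and each extra $\eta$-derivative gains $\comii\eta^{-1}$ while possibly lowering the order of $\chi$-derivative, but never worsening the bound $\eps\comii v^\gamma\comii\eta^{2s}$ that is claimed (the estimate \reff{11052702} as stated is uniform in the order of derivatives, with the constant depending on $\alpha,\beta$).

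The main technical obstacle is bookkeeping: after applying $\partial_v^\alpha\partial_\eta^\beta$ to $\varphi_\eps$ (or to $\xi\cdot\partial_\eta\varphi_\eps$) via Faà di Bruno, one obtains a sum of terms, each a product of $\chi^{(k)}$ at the argument and several derivatives of the argument $\comii\xi/(\eps\comii v^\gamma\comii\eta^{1+2s})$; one must check that in every such term the total power of $\comii\xi$ minus the number of available "support factors" $\eps\comii v^\gamma\comii\eta^{1+2s}$ is nonpositive, so that all excess $\comii\xi$'s can be traded away. This is a routine but slightly delicate induction on $\abs\alpha+\abs\beta$, the point being that differentiating the argument once never raises its power of $\comii\xi$ (it stays at one per factor) while it always lowers the denominator power of $\comii v$ or $\comii\eta$ by one, so homogeneity works in our favour. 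Once this accounting is set up, the three displayed inequalities follow immediately, completing the proof of Lemma \ref{lemm110527}.
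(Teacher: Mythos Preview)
Your proposal is correct and follows exactly the approach of the paper, which gives only a one-line proof (``It is just a straightforward verification, since $\comii\xi\leq \eps\comii v^\gamma\comii\eta^{1+2s}$ on the support of $\varphi_\eps$''). Your explicit Fa\`a di Bruno bookkeeping---each derivative of the argument $\comii\xi/(\eps\comii v^\gamma\comii\eta^{1+2s})$ gaining a factor $\comii v^{-1}$ or $\comii\eta^{-1}$ while the argument itself stays $O(1)$ on $\mathrm{supp}\,\chi^{(k)}$, and trading each surviving power of $\comii\xi$ for $\eps\comii v^\gamma\comii\eta^{1+2s}$ in \reff{11052702}---is precisely the verification the paper leaves implicit.
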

\begin{proof}
  It is just a straightforward verification, since
\[
   \comii{\xi}  \leq \eps \comii{v}^{\gamma} \comii{\eta}^{ 1+2s}
\]
on the support of $\varphi_\eps$.
The proof is completed.
\end{proof}

  \begin{lemma}\label{lem11050601}
  Let $M_\eps^s$ be given in \reff{11052716}. Then for all $f\in\mathcal S(\mathbb R_v^n)$ we have
  \begin{eqnarray}\label{11051101}
    \abs{\biginner{\com{i \inner{\tau+v\cdot\xi},~ M_\eps^{s}}f,~\comii{v}^{\gamma} M_\eps^{s} f}_{L^2}}
   \lesssim  \eps\norm{\comii v^{\gamma}\comii{D_\eta}^{2s}f}_{L^2}^2.
  \end{eqnarray}
\end{lemma}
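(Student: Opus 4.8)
The plan is to compute the commutator $\com{i(\tau+v\cdot\xi),~M_\eps^{s}}$ exactly, to observe that it is an $\eps$-small pseudo-differential operator, and then to absorb it into the right-hand side by shifting one factor $\comii{D_\eta}^{s}$ so that both entries of the resulting inner product are controlled by $\norm{\comii v^{\gamma}\comii{D_\eta}^{2s}f}_{L^2}$.

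First I would compute the commutator. Since $\tau\in\mathbb R$ is a parameter, $\com{i\tau,~M_\eps^{s}}=0$, so $\com{i(\tau+v\cdot\xi),~M_\eps^{s}}=\com{iv\cdot\xi,~M_\eps^{s}}$. With $\xi$ fixed, $v\cdot\xi$ is the Weyl quantization of the symbol $v\cdot\xi$, which is affine in $(v,\eta)$; hence the Moyal expansion of the commutator terminates after the first order term, and there is an absolute constant $c$ with
\[
   \com{i(\tau+v\cdot\xi),~M_\eps^{s}}=c\,\bigl(\xi\cdot\partial_\eta(\varphi_\eps\comii\eta^{s})\bigr)^{w}=:r_\eps^{w}.
\]
Next I would check that $r_\eps\in\eps\, S\bigl(\comii v^{\gamma}\comii\eta^{3s},~\abs{dv}^2+\abs{d\eta}^2\bigr)$, uniformly with respect to $\eps$ and $\xi$. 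Indeed, for every $j$ the symbol $\partial_{\eta_j}(\varphi_\eps\comii\eta^{s})$ is supported in $\set{\comii\xi\leq 2\eps\comii v^{\gamma}\comii\eta^{1+2s}}$, while by \reff{11053003} one has $\abs{\partial_v^\alpha\partial_\eta^{\beta+e_j}(\varphi_\eps\comii\eta^{s})}\lesssim\comii v^{-\abs\alpha}\comii\eta^{s-1-\abs\beta}$; multiplying by $\xi_j$, summing over $j$, and using $\abs\xi\leq\comii\xi\leq 2\eps\comii v^{\gamma}\comii\eta^{1+2s}$ on that support gives $\abs{\partial_v^\alpha\partial_\eta^\beta r_\eps}\lesssim\eps\comii v^{\gamma}\comii\eta^{3s}$. (Equivalently one may split $\xi\cdot\partial_\eta(\varphi_\eps\comii\eta^{s})=(\xi\cdot\partial_\eta\varphi_\eps)\comii\eta^{s}+\varphi_\eps\,\xi\cdot\partial_\eta\comii\eta^{s}$ and invoke \reff{11052702} for the first term.)

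Then, since $\comii{D_\eta}^{-s}=(\comii\eta^{-s})^{w}$ is self-adjoint on $L^2(\mathbb R_v^n)$, I would write
\[
  \biginner{r_\eps^{w}f,~\comii v^{\gamma}M_\eps^{s}f}_{L^2}=\biginner{\comii{D_\eta}^{-s}r_\eps^{w}f,~\comii{D_\eta}^{s}\comii v^{\gamma}M_\eps^{s}f}_{L^2},
\]
so that Cauchy--Schwarz bounds the left-hand side of \reff{11051101} by $\norm{\comii{D_\eta}^{-s}r_\eps^{w}f}_{L^2}\norm{\comii{D_\eta}^{s}\comii v^{\gamma}M_\eps^{s}f}_{L^2}$. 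By \reff{11053010} together with the previous step, $\comii{D_\eta}^{-s}r_\eps^{w}\in\eps\,{\rm Op}\bigl(S(\comii v^{\gamma}\comii\eta^{2s})\bigr)$, while $\comii{D_\eta}^{s}\comii v^{\gamma}M_\eps^{s}\in{\rm Op}\bigl(S(\comii v^{\gamma}\comii\eta^{2s})\bigr)$ — here using $M_\eps^{s}\in{\rm Op}(S(\comii\eta^{s}))$ from Lemma \ref{lemm110527} — both uniformly in $\eps$ and $\xi$. It then suffices to observe that every $T\in{\rm Op}\bigl(S(\comii v^{\gamma}\comii\eta^{2s})\bigr)$ obeys $\norm{Tf}_{L^2}\lesssim\norm{\comii v^{\gamma}\comii{D_\eta}^{2s}f}_{L^2}$, since $T\comii{D_\eta}^{-2s}\comii v^{-\gamma}\in{\rm Op}(S(1))$ is bounded on $L^2$ and $T=\bigl(T\comii{D_\eta}^{-2s}\comii v^{-\gamma}\bigr)\comii v^{\gamma}\comii{D_\eta}^{2s}$ on $\mathcal S(\mathbb R_v^n)$. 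Applying this to $\comii{D_\eta}^{-s}r_\eps^{w}$ (which yields the extra factor $\eps$) and to $\comii{D_\eta}^{s}\comii v^{\gamma}M_\eps^{s}$ gives exactly \reff{11051101}.

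The main obstacle is the second step: although $r_\eps$ is genuinely of order $\comii\eta^{3s}$, one must verify it nevertheless carries a full factor $\eps$ with \emph{all} symbol seminorms uniform in $\xi$, and this relies precisely on the support localization $\comii\xi\lesssim\eps\comii v^{\gamma}\comii\eta^{1+2s}$ built into $\varphi_\eps$, that is, on the exponent $1+2s$ in \reff{11052701}. A secondary point is that the final bound must contain no stray $\norm f_{L^2}$ term; this is guaranteed by inverting $\comii v^{\gamma}\comii{D_\eta}^{2s}$ in the order $\comii{D_\eta}^{-2s}\comii v^{-\gamma}$, so that the leftover operator has symbol of order exactly one.
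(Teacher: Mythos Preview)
Your proof is correct and follows essentially the same approach as the paper's. Both compute the commutator exactly as a Weyl quantization of $\xi\cdot\partial_\eta(\varphi_\eps\comii\eta^{s})$, use the support condition $\comii\xi\lesssim\eps\comii v^{\gamma}\comii\eta^{1+2s}$ to extract the factor $\eps$ in the symbol class, and conclude by symbolic calculus; the only cosmetic difference is that the paper moves $M_\eps^{s}\comii v^{\gamma}$ to the other side by adjoint and works with the single composed operator in $\eps\,{\rm Op}\bigl(S(\comii v^{2\gamma}\comii\eta^{4s})\bigr)$, whereas you insert $\comii{D_\eta}^{-s}\comii{D_\eta}^{s}$ and apply Cauchy--Schwarz to two factors in ${\rm Op}\bigl(S(\comii v^{\gamma}\comii\eta^{2s})\bigr)$.
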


\begin{proof}
  Observe
  \[
     \com{i \inner{\tau+v\cdot\xi},~  M_\eps^s}= \frac{ 1}{2\pi} \Big\{\tau+v\cdot\xi,~~\varphi_\eps(v,\eta)\comii{\eta}^s\Big\}^{w}=-\frac{ 1}{2\pi}\Big( \xi\cdot\partial_\eta\inner{\varphi_\eps \comii{\eta}^s}\Big)^{w},
  \]
  where $\big\{\cdot,~\cdot\big\}$ stands for the Poisson bracket defined by
  \begin{eqnarray}\label{11051505}
  \bigset{p,~q}=\frac{\partial p}{\partial\eta}\cdot\frac{\partial q}{\partial v}-\frac{\partial p}{\partial v}\cdot\frac{\partial q}{\partial \eta}.
\end{eqnarray}
Thus
  \begin{eqnarray*}
     \biginner{\com{i \inner{\tau+v\cdot\xi},~  M_\eps^s}f,~\comii{v}^{\gamma} M_\eps^s f}_{L^2}
     = -{1\over {2\pi}}\biginner{ M_\eps^s \comii{v}^{\gamma}\Big( \xi\cdot\partial_\eta\inner{\varphi_\eps\comii{\eta}^s}\Big)^{w} f, ~f}_{L^2}.
  \end{eqnarray*}
  Moreover, in view of \reff{11052702} and \reff{11053010} we have
  \begin{eqnarray*}
    M_\eps^s \comii{v}^{\gamma}\Big( \xi\cdot\partial_\eta\inner{\varphi_\eps\comii{\eta}^s}\Big)^{w}\in{\rm Op}\inner{S\inner{\eps  \comii{v}^{2\gamma}\comii{\eta}^{4s},~\abs{dv}^2+\abs{d\eta}^2}}
  \end{eqnarray*}
  uniformly with respect to $\eps$ and $\xi$.  This implies
\begin{eqnarray*}
     \abs{ \biginner{\com{i \inner{\tau+v\cdot\xi},~  M_\eps^s}f,~\comii{v}^{\gamma} M_\eps^s f}_{L^2}}\lesssim \eps\norm{\comii{v}^{\gamma} \comii{D_v}^{2s}f}_{L^2}^2,
  \end{eqnarray*}
completing the proof of Lemma \ref{lem11050601}.
\end{proof}

The rest of this subsection is occupied by

\begin{proof}[Proof of Lemma \ref{110503}]
  Write
  \[
  \com{\tilde{\mathcal P},~ M_\eps^{s}}=\com{i\inner{t+v\cdot \xi},~ M_\eps^{s}}+a(v)\com{ (-\tilde\triangle_v)^s  ,~ M_\eps^{s}}
  +\com{a ,~ M_\eps^{s}}
  (-\tilde\triangle_v)^s+\com{b,~ M_\eps^{s}}.
  \]
  Then by \reff{11051101} we have
  \begin{eqnarray}\label{11052020}
    \abs{\biginner{\com{\tilde{\mathcal P},~ M_\eps^{s}}f,~\comii{v}^{\gamma} M_\eps^{s} f}_{L^2}}
   \lesssim  \eps\norm{\comii v^{\gamma}\comii{D_\eta}^{2s}f}_{L^2}^2+\sum_{j=1}^3 A_1+A_2+A_3,
  \end{eqnarray}
  with
  \begin{eqnarray*}
      A_1&=& \abs{\biginner{a(v)\com{ (-\tilde\triangle_v)^s  ,~ M_\eps^{s}} f,~\comii{v}^{\gamma} M_\eps^{s} f}_{L^2}},\\
      A_2&=& \abs{\biginner{\com{a ,~ M_\eps^{s}} (-\tilde\triangle_v)^s f,~\comii{v}^{\gamma} M_\eps^{s} f}_{L^2}},\\
      A_3&=& \abs{\biginner{\com{b,~ M_\eps^{s}} f,~\comii{v}^{\gamma} M_\eps^{s} f}_{L^2}}.
  \end{eqnarray*}
  In view of \reff{11053003} we see
  \[
     \com{ (-\tilde\triangle_v)^s  ,~ M_\eps^{s}}\in {\rm Op} \inner{S\inner{\comii{v}^{-1}\comii \eta^{3s-1},~\abs{dv}^2+\abs{d\eta}^2}},
  \]
  and thus
  \[
     a(v)\com{ (-\tilde\triangle_v)^s  ,~ M_\eps^{s}}\in {\rm Op} \inner{S\inner{\comii{v}^{s+\gamma}\comii \eta^{2s},~\abs{dv}^2+\abs{d\eta}^2}}
  \]
  due to  \reff{assumption1} and the fact that $s<1$.  This implies
  \begin{eqnarray*}
      A_1\lesssim  \norm{\comii v^\gamma\comii{D_\eta}^{2s} f}_{L^2}\norm{\comii{D_\eta}^{s}\comii v^{s+\gamma} f}_{L^2}\lesssim
      \eps\norm{\comii v^{\gamma}\comii{D_\eta}^{2s}f}_{L^2}^2+C_\eps \norm{\comii{D_\eta}^{s}\comii v^{s+\gamma} f}_{L^2}^2.
  \end{eqnarray*}
  Similarly, by  \reff{assumption2} and  \reff{11053003}, we conclude  that $ \com{b,~ M_\eps^{s}} \in {\rm Op}
   \inner{S\inner{\comii{v}^{s+\gamma},~\abs{dv}^2+\abs{d\eta}^2}}$ and
  \[
     \com{a ,~ M_\eps^{s}} (-\tilde\triangle_v)^s \in {\rm Op} \inner{S\inner{\comii{v}^{s+\gamma}\comii \eta^{2s},~\abs{dv}^2+\abs{d\eta}^2}},
  \]
  which implies
  \begin{eqnarray*}
      A_2\lesssim  \norm{\comii v^\gamma\comii{D_\eta}^{2s} f}_{L^2}\norm{\comii{D_\eta}^{s}\comii v^{s+\gamma} f}_{L^2}\lesssim
      \eps\norm{\comii v^\gamma\comii{D_\eta}^{2s} f}_{L^2}^2+C_\eps \norm{\comii{D_\eta}^{s}\comii v^{s+\gamma} f}_{L^2}^2,
  \end{eqnarray*}
  and
  \begin{eqnarray*}
      A_3\lesssim  \norm{\comii{D_\eta}^{s}\comii v^{s+\gamma} f}_{L^2}\norm{\comii{v}^{\gamma}f}_{L^2}\lesssim
       \norm{\comii{D_\eta}^{s}\comii v^{s+\gamma} f}_{L^2}^2.
  \end{eqnarray*}
  These inequalities together with \reff{11052020} give  the desired estimate \reff{11051110}, completing the
   proof of Lemma \ref{110503}.
\end{proof}

\section{Proof of  the main results}

In this  section we will proceed to prove Theorem  \ref{th1} by four steps. The first three subsections are devoted to proving the following proposition concerning the hypoellipticity of the operator with parameters, while in the last one we present the proof of Theorem  \ref{th1}.  Since our main analysis is still on  $\mathbb R_v^n$,   we will use the same notation as in the previous section; that is,  $\inner{\cdot, \cdot}_{L^2}$  and  $\norm{\cdot}_{L^2}$ stand for  $\inner{\cdot, \cdot}_{L^2(\mathbb R_v^n)}$  and  $\norm{\cdot}_{L^2(\mathbb R_v^n)}$, respectively.

 \begin{proposition}\label{prp110511}
Let $\tilde{\mathcal P}$ be given in \reff{11051401} with $a$, $b$ satisfying the assumptions \reff{assumption1} and \reff{assumption2}.   Then   for  all $f\in\mathcal S(\mathbb R_v^n)$ we have
  \begin{eqnarray*}
        &&\norm{\comii{v}^{\frac{\gamma-2s}{1+2s}}\comii{\tau}^{\frac{2s}{1+2s}} f}_{L^2 }+\norm{\comii{v}^{\frac{\gamma}{1+2s}}\comii{\xi}^{\frac{2s}{1+2s}} f}_{L^2}+\norm{\comii{v}^{\gamma}\comii{D_v}^{2s}f}_{L^2 }+\norm{\comii{v}^{2s+\gamma}f}_{L^2 }\nonumber\\
        &\lesssim& ~   \norm{\tilde{\mathcal P}f}_{L^2 }+\norm{ f}_{L^2 }.
\end{eqnarray*}
Recall here $\norm{\cdot}_{L^2}$ stands for $\norm{\cdot}_{L^2(\mathbb R^n_v)}$.
\end{proposition}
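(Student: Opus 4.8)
Following the author, I would work on the Fourier side with $\tilde{\mathcal P}$ from \reff{11051401} and establish the four quantities in order of increasing hypoellipticity, each feeding into the next. The pure weight $\norm{\comii v^{2s+\gamma}f}_{L^2}$ involves only multiplication operators and the bounds \reff{assumption1}; the velocity regularity $\norm{\comii v^{\gamma}\comii{D_v}^{2s}f}_{L^2}$ is governed by the fractional diffusion once the $v$-weights are tracked; and the two genuine gains $\norm{\comii v^{(\gamma-2s)/(1+2s)}\comii\tau^{2s/(1+2s)}f}_{L^2}$ and $\norm{\comii v^{\gamma/(1+2s)}\comii\xi^{2s/(1+2s)}f}_{L^2}$ come from the multiplier $M_\eps^s$ of \reff{11052716}: its symbol $\vpi_\eps$ is supported where $\comii\xi\lesssim\eps\comii v^{\gamma}\comii\eta^{1+2s}$, which separates a \emph{diffusion regime} ($\comii\eta$ large) from a \emph{transport regime} ($\comii\eta$ small). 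The estimates are mildly coupled, so each step carries an $\eps$-loss into the quantities not yet controlled, and the argument closes by choosing $\eps$ small. For the weight term, I would test $\tilde{\mathcal P}f=g$ against $\comii v^{2s+\gamma}f$ and take real parts: the transport term $i(\tau+v\cdot\xi)$ is skew-adjoint and commutes with the real multiplier $\comii v^{(2s+\gamma)/2}$, so it drops; the lower bound on $b$ in \reff{assumption1} gives $\mathrm{Re}\,\biginner{b(v)f,~\comii v^{2s+\gamma}f}_{L^2}\gtrsim\norm{\comii v^{2s+\gamma}f}_{L^2}^2$; and transferring one factor $(-\tilde\triangle_v)^{s/2}$ across $a(v)\comii v^{2s+\gamma}$ turns the $a$-term into $\norm{(a\comii v^{2s+\gamma})^{1/2}(-\tilde\triangle_v)^{s/2}f}_{L^2}^2\ge0$ plus commutators which, by \reff{11060201}--\reff{11060202}, have strictly lower weight order and are absorbed via \reff{11050715}, an interpolation in $v$ (iterated over intermediate weights when $\gamma$ is large), and Young's inequality on $\abs{\biginner{g,~\comii v^{2s+\gamma}f}_{L^2}}$. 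This yields $\norm{\comii v^{2s+\gamma}f}_{L^2}\lesssim\norm g_{L^2}+\norm f_{L^2}$.

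\noindent\textbf{Velocity regularity via the multiplier.} Next I would test the identity $\tilde{\mathcal P}(M_\eps^sf)=M_\eps^sg+\com{\tilde{\mathcal P},~M_\eps^s}f$ against $\comii v^{\gamma}M_\eps^sf$ and take real parts. The transport term vanishes again (skew-adjointness and commutation with $\comii v^{\gamma/2}$); \reff{assumption1} produces, after transferring $(-\tilde\triangle_v)^{s/2}$, a coercive lower bound $\gtrsim\norm{\comii v^{\gamma}\inner{\vpi_\eps\comii\eta^{2s}}^wf}_{L^2}^2$ modulo lower order (by \reff{11060201}--\reff{11060202}); the source term is bounded by $\norm g_{L^2}\norm{\comii v^{\gamma}\comii{D_v}^{2s}f}_{L^2}$; and $\com{\tilde{\mathcal P},~M_\eps^s}$ paired against $\comii v^{\gamma}M_\eps^sf$ is exactly what Lemma \ref{110503} controls, i.e.\ by $\eps\norm{\comii v^{\gamma}\comii{D_\eta}^{2s}f}_{L^2}^2+C_\eps\bigl(\norm{\comii{D_\eta}^s\comii v^{s+\gamma}f}_{L^2}^2+\norm f_{L^2}^2\bigr)$, where $\norm{\comii{D_\eta}^s\comii v^{s+\gamma}f}_{L^2}$ is in turn absorbed using the identity $\comii\eta^s\comii v^{s+\gamma}=(\comii v^{\gamma}\comii\eta^{2s})^{1/2}(\comii v^{2s+\gamma})^{1/2}$ and the previous step. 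Combining this with the pointwise comparisons valid on $\mathrm{supp}\,\vpi_\eps$ (where $\comii\xi\le2\eps\comii v^{\gamma}\comii\eta^{1+2s}$, so $\comii v^{\gamma/(1+2s)}\comii\xi^{2s/(1+2s)}\lesssim\eps^{2s/(1+2s)}\comii v^{\gamma}\comii\eta^{2s}$) and on its complement ($\comii\xi\ge\eps\comii v^{\gamma}\comii\eta^{1+2s}$, so $\comii v^{\gamma}\comii\eta^{2s}\lesssim\eps^{-2s/(1+2s)}\comii v^{\gamma/(1+2s)}\comii\xi^{2s/(1+2s)}$), this step shows that $\norm{\comii v^{\gamma}\comii{D_v}^{2s}f}_{L^2}$ and $\norm{\comii v^{\gamma/(1+2s)}\comii\xi^{2s/(1+2s)}f}_{L^2}$ control one another, up to these $\eps$-powers, modulo the single leftover $\norm{\comii v^{\gamma/(1+2s)}\comii\xi^{2s/(1+2s)}(1-\vpi_\eps^w)f}_{L^2}$ and $\norm g_{L^2}+\norm f_{L^2}$.

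\noindent\textbf{The transport regime, the temporal gain, and assembling.} It then remains to bound $\norm{\comii v^{\gamma/(1+2s)}\comii\xi^{2s/(1+2s)}(1-\vpi_\eps^w)f}_{L^2}$, supported where $\comii\eta\lesssim(\comii\xi/\comii v^{\gamma})^{1/(1+2s)}$ and the diffusion is too weak to be of use; this is the true hypoelliptic gain, and I would extract it by a transport-commutator estimate in the spirit of H\'erau and Pravda-Starov, pairing $g=\tilde{\mathcal P}f$ with an operator whose symbol is essentially $\comii v^{2\gamma/(1+2s)}\comii\xi^{4s/(1+2s)}$ times a cutoff to that region (cleanly handled, if convenient, through the Wick quantization), and using that $\bigset{v\cdot\xi,~\text{cutoff}}=-\xi\cdot\partial_\eta(\text{cutoff})$ carries the factor $\abs\xi\comii\eta^{-1}\gtrsim\comii v^{\gamma/(1+2s)}\comii\xi^{2s/(1+2s)}$ on the support of $\partial_\eta$ of the cutoff; the commutators with $a(v)(-\tilde\triangle_v)^s+b(v)$ are lower order, of the type handled in \reff{11051518}. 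With $\norm{\comii v^{\gamma}\comii{D_v}^{2s}f}_{L^2}$, $\norm{\comii v^{\gamma/(1+2s)}\comii\xi^{2s/(1+2s)}f}_{L^2}$ and $\norm{\comii v^{2s+\gamma}f}_{L^2}$ all bounded by $\norm g_{L^2}+\norm f_{L^2}$, the temporal gain follows from the equation and the inequality $\comii\tau\le\comii{\tau+v\cdot\xi}+\comii v\,\comii\xi$: since $i(\tau+v\cdot\xi)f=g-(a(-\tilde\triangle_v)^s+b)f$ gives $\norm{\comii v^{-2s}(\tau+v\cdot\xi)f}_{L^2}\lesssim\norm g_{L^2}+\norm{\comii v^{\gamma}\comii{D_v}^{2s}f}_{L^2}+\norm{\comii v^{2s+\gamma}f}_{L^2}$, interpolating between $\comii v^{-2s}\comii{\tau+v\cdot\xi}$ and the pure weight $\comii v^{\gamma+2s(2s-1)}$ (which is $\le\comii v^{2s+\gamma}$ since $s<1$) controls $\norm{\comii v^{(\gamma-2s)/(1+2s)}\comii{\tau+v\cdot\xi}^{2s/(1+2s)}f}_{L^2}$, while the contribution of $\comii v\,\comii\xi$ equals exactly $\comii v^{\gamma/(1+2s)}\comii\xi^{2s/(1+2s)}$, already controlled. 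Collecting the three steps and taking $\eps$ small enough to absorb the $\eps\norm{\comii v^{\gamma}\comii{D_v}^{2s}f}_{L^2}^2$-type terms into the left-hand side proves the Proposition.

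\noindent\textbf{Main obstacle.} The heart of the matter is the last step, together with the $\eps$-bookkeeping tying it to the second one: producing the $\comii\xi^{2s/(1+2s)}$ smoothing precisely in the regime where the fractional diffusion is too weak to see it. This is what dictates the cutoff scale $\comii\xi\sim\comii v^{\gamma}\comii\eta^{1+2s}$ in \reff{11052701} and forces a careful matching of the $\eps$-powers between the diffusion and transport regimes. Everything else -- tracking the many commutators with the unbounded coefficients $a,b$, whose derivatives decay only polynomially by \reff{assumption2} -- is routine but lengthy symbolic calculus.
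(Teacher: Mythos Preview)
Your overall architecture is close to the paper's, and the weight estimate, the use of $M_\eps^s$ together with Lemma \ref{110503}, and the derivation of the $\tau$-gain from the equation are all essentially what the paper does. There is, however, a genuine gap in the step you flag as the ``main obstacle'': the multiplier you propose for the transport regime does not produce the needed positivity.

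You suggest pairing $\tilde{\mathcal P}f$ against an operator whose symbol is ``$\comii v^{2\gamma/(1+2s)}\comii\xi^{4s/(1+2s)}$ times a cutoff'' and then extracting the gain from $\{v\cdot\xi,\text{cutoff}\}=-\xi\cdot\partial_\eta(\text{cutoff})$. But for the radial cutoff $\psi(v,\eta)=\chi\bigl(\comii v^{\gamma}\comii\eta^{1+2s}/\comii\xi\bigr)$, one has $\xi\cdot\partial_\eta\psi\propto(\xi\cdot\eta)\,\chi'(\cdot)$, which is supported only on the shell $\{\chi'\neq0\}$ and is \emph{not} sign-definite there (it changes sign with $\xi\cdot\eta$). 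So this commutator cannot furnish a lower bound of the form $\comii v^{\gamma/(1+2s)}\comii\xi^{2s/(1+2s)}$ on the whole transport region. The paper's multiplier \reff{11041511},
\[
  p=\frac{\comii v^{\gamma/(1+2s)}\,\xi\cdot\eta}{\comii\xi^{2-\frac{2s}{1+2s}}}\,\psi,
\]
contains the crucial factor $\xi\cdot\eta$: the Poisson bracket $\{p,\tau+v\cdot\xi\}=\xi\cdot\partial_\eta p$ then has main term $\comii v^{\gamma/(1+2s)}\abs\xi^2\comii\xi^{-2+\frac{2s}{1+2s}}\psi\approx\comii v^{\gamma/(1+2s)}\comii\xi^{\frac{2s}{1+2s}}\psi$, which \emph{is} nonnegative and covers the full region $\{\psi=1\}$. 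The remaining pieces (involving $1-\psi$ and $\xi\cdot\partial_\eta\psi$) land back in $S(\comii v^{\gamma}\comii\eta^{2s})$ and are absorbed by \reff{11050715}. This is the missing idea in your sketch.

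A secondary point: the paper orders the steps differently, proving the $\xi$-gain \emph{before} the full velocity regularity. With the correct multiplier $p$, the $\xi$-estimate \reff{11051601} needs only the weight bound \reff{11052931}; then the $M_\eps^s$ argument for $\norm{\comii v^{\gamma}\comii{D_v}^{2s}f}_{L^2}$ uses \reff{11051601} to dispose of the $(1-\varphi_\eps)$ piece ($J_2$ in the paper). Your reversed order can be made to close once the multiplier is fixed, but it forces the two estimates to be proved simultaneously with $\eps$-tracking, whereas the paper's order decouples them.
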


\subsection{The first part of the proof of  Proposition \ref{prp110511}}

In this subsection we prove the weighted estimate; that is

\begin{lemma}
  Let $\tilde{\mathcal P}$ be given in \reff{11051401} with $a$, $b$ satisfying the assumptions \reff{assumption1} and \reff{assumption2}.  Then   for  all $f\in\mathcal S(\mathbb R_v^n)$ we have
  \begin{eqnarray}\label{11052931}
        \norm {\comii{D_\eta}^s \comii v^{s+\gamma}f}_{L^2}+\norm{\comii{v}^{2s+\gamma}f}_{L^2 }
         \lesssim      \norm{\tilde{\mathcal P}f}_{L^2 }+\norm{ f}_{L^2 }.
\end{eqnarray}
\end{lemma}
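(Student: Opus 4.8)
The plan is to test the equation $\tilde{\mathcal P}f = g$ against a suitable weight. The natural candidate is to pair $\tilde{\mathcal P}f$ with $\comii v^{2(2s+\gamma)}f$, i.e.\ to compute $\mathrm{Re}\,\biginner{\tilde{\mathcal P}f,~\comii v^{2(2s+\gamma)}f}_{L^2}$. The skew-adjoint transport term $i(\tau+v\cdot\xi)$ contributes nothing to the real part (it commutes with the multiplication by $\comii v^{2(2s+\gamma)}$, since that is a function of $v$ alone), so only the diffusion term $a(v)(-\tilde\triangle_v)^s$ and the zeroth-order term $b(v)$ survive. The $b(v)$ term gives directly $\biginner{b\comii v^{2s+\gamma}f,~\comii v^{2s+\gamma}f}_{L^2}\gtrsim \norm{\comii v^{2s+\gamma}f}_{L^2}^2$ by the lower bound in \reff{assumption1}, which is exactly the second term we want on the left.

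For the diffusion term one writes, much as in the proof of Lemma \ref{lem11053101},
\begin{eqnarray*}
  \mathrm{Re}\,\biginner{a(v)(-\tilde\triangle_v)^s f,~\comii v^{2(2s+\gamma)}f}_{L^2}
  = \biginner{(-\tilde\triangle_v)^{s/2}\, a(v)\comii v^{2s+\gamma}(-\tilde\triangle_v)^{s/2} h,~h}_{L^2} + (\text{commutator terms}),
\end{eqnarray*}
where $h=\comii v^{s+\gamma}f$ up to harmless factors; the point is that the principal part is a nonnegative operator, giving control of $\norm{a^{1/2}\comii v^{(2s+\gamma)/2}(-\tilde\triangle_v)^{s/2}f}_{L^2}^2$, and hence of $\norm{\comii{D_\eta}^s\comii v^{s+\gamma}f}_{L^2}^2$ via \reff{assumption1} and the commutator estimate \reff{11060401} (exactly as the first reduction in Lemma \ref{lem11053101}). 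The commutator terms arising when moving $\comii v^{2(2s+\gamma)}$ and the two copies of $(-\tilde\triangle_v)^{s/2}$ past $a(v)$ are handled by \reff{11060201}: each commutator with $(-\tilde\triangle_v)^{s/2}$ costs one power of $\comii\eta$ but gains one power of $\comii v^{-1}$, so every error term is bounded by a product of the ``good'' quantities $\norm{\comii{D_\eta}^s\comii v^{s+\gamma}f}_{L^2}$, $\norm{\comii v^{2s+\gamma}f}_{L^2}$ with strictly lower-order factors, which after an interpolation (using $s<1$, exactly as in Lemma \ref{lem11053101}) and an $\eps$--Young's inequality can be absorbed into the left-hand side. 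The right-hand side produces $\abs{\biginner{\tilde{\mathcal P}f,~\comii v^{2(2s+\gamma)}f}_{L^2}} \le \norm{\tilde{\mathcal P}f}_{L^2}\norm{\comii v^{2(2s+\gamma)}f}_{L^2}$, and here one must be slightly careful: $\comii v^{2(2s+\gamma)}$ is a weight of order $2(2s+\gamma)$, twice what we can control, so instead one pairs against $\comii v^{2(2s+\gamma)}\comii{D_\eta}^{-2\delta}\comii{D_\eta}^{2\delta}$-type cutoffs, or more simply truncates the weight at level $\comii v^{N}$, proves the estimate with constants uniform in $N$, and lets $N\to\infty$ by monotone convergence — this is the standard device to make the a priori pairing legitimate for $f\in\mathcal S$.

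The main obstacle I anticipate is precisely this bookkeeping of the weighted commutators: one is working with a weight $\comii v^{2(2s+\gamma)}$ whose order exceeds the a priori control, so the argument must be organized so that the top-order weighted quantity $\norm{\comii v^{2s+\gamma}f}_{L^2}$ appears on the right only to the first power (paired against $\norm{\tilde{\mathcal P}f}_{L^2}$) and otherwise only in absorbable combinations; getting the powers of $\comii v$ and $\comii\eta$ to balance in each commutator term — in particular checking that the loss from each $(-\tilde\triangle_v)^{s/2}$-commutator is genuinely compensated, using $2s+\gamma-1 < s + \gamma/2 \cdot 2$... i.e.\ using $s<1$ for the interpolation — is where the real work lies. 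Everything else (the sign of the transport term, the coercivity of the principal diffusion part, the final $\eps$-absorption) is routine given Lemma \ref{lem11053101} and the symbolic calculus facts \reff{11053010}, \reff{11060401}, \reff{11060201} already recorded.
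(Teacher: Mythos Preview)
Your overall plan is sound, but the weight you pair against is too strong and the argument does not close. You test against $\comii v^{2(2s+\gamma)}f$; the resulting right-hand side
\[
   \abs{\biginner{\tilde{\mathcal P}f,~\comii v^{2(2s+\gamma)}f}_{L^2}}
   \le \norm{\tilde{\mathcal P}f}_{L^2}\,\norm{\comii v^{2(2s+\gamma)}f}_{L^2}
\]
carries a weight of order $2(2s+\gamma)$, whereas the coercive terms you extract on the left (from $b$ and from the symmetrized diffusion) give control only of $\norm{\comii v^{2s+\gamma}f}_{L^2}$ and $\norm{\comii{D_\eta}^s\comii v^{s+\gamma}f}_{L^2}$ (or slightly better, but never of order $2(2s+\gamma)$ in the weight). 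Since $\norm{\comii v^{2(2s+\gamma)}f}_{L^2}$ cannot be interpolated between these quantities and $\norm f_{L^2}$, it cannot be absorbed. Your proposed truncation at level $N$ does not rescue this: the inequality you obtain after truncation has a constant that necessarily grows with $N$, so monotone convergence yields nothing.

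The fix is simply to halve the test weight: pair against $\comii v^{2s+\gamma}f$, or equivalently apply \reff{11050715} to the function $\comii v^{s+\gamma/2}f$. Then the right-hand side is $\norm{\tilde{\mathcal P}f}_{L^2}\norm{\comii v^{2s+\gamma}f}_{L^2}$, which is absorbable by Young's inequality; the $b$-term gives $\norm{b^{1/2}\comii v^{s+\gamma/2}f}_{L^2}^2\gtrsim\norm{\comii v^{2s+\gamma}f}_{L^2}^2$; and the only commutator left is
\[
   \com{\tilde{\mathcal P},~\comii v^{s+\gamma/2}}=a(v)\com{(-\tilde\triangle_v)^s,~\comii v^{s+\gamma/2}}\in{\rm Op}\inner{S\inner{\comii v^{3s+\frac{3\gamma}{2}-1}\comii\eta^{s},~\abs{dv}^2+\abs{d\eta}^2}},
\]
which is controlled exactly as you outlined via \reff{11060401} and the interpolation $\norm{\comii v^{3s+\gamma-1}f}_{L^2}\le\eps\norm{\comii v^{2s+\gamma}f}_{L^2}+C_\eps\norm f_{L^2}$ (this is indeed where $s<1$ enters). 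This is what the paper does. Your commutator bookkeeping and the identification of $s<1$ as the structural constraint are correct; only the exponent of the test weight needs to be halved.
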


\begin{proof}
Let $f\in\mathcal S(\mathbb R_v^n)$. Using \reff{11050715} to the function $\comii{v}^{s+\frac{\gamma}{2}}f$, we have
\begin{eqnarray*}
      &&\norm {\comii{D_\eta}^s \comii v^{s+\gamma}f}_{L^2}^2+\norm{\comii{v}^{2s+\gamma}f}_{L^2}^2 \\
      &\lesssim &\abs{\biginner{\tilde{\mathcal P} \comii{v}^{s+\frac{\gamma}{2}}f,~\comii{v}^{s+\frac{\gamma}{2}}f}_{L^2}}+\norm{\comii{v}^{s+\frac{\gamma}{2}}f}_{L^2}^2\\
      &\lesssim&  \abs{\biginner{\tilde{\mathcal P} f,~\comii{v}^{2s+\gamma}f}_{L^2}}+\abs{\biginner{\com{\tilde{\mathcal P},~ \comii{v}^{s+\frac{\gamma}{2}}} f, ~\comii{v}^{s+\frac{\gamma}{2}}f}_{L^2}}+\norm{\comii{v}^{2s+\gamma}f}_{L^2}\norm{f}_{L^2},
\end{eqnarray*}
which imply that
\begin{eqnarray}\label{11052910}
     \norm {\comii{D_\eta}^s \comii v^{s+\gamma}f}_{L^2}^2+\norm{\comii{v}^{2s+\gamma}f}_{L^2}^2
      \lesssim  \norm{\tilde{\mathcal P} f}_{L^2}^2+\norm{f}_{L^2}^2+\abs{\biginner{\com{\tilde{\mathcal P},  \comii{v}^{s+\frac{\gamma}{2}}} f, \comii{v}^{s+\frac{\gamma}{2}}f}_{L^2}}.
\end{eqnarray}
Moreover, note that  $\com{\tilde{\mathcal P},~ \comii{v}^{s+\frac{\gamma}{2}}}=a(v)\com{(-\tilde\triangle_v)^s,~ \comii{v}^{s+\frac{\gamma}{2}}}$, and thus by \reff{11053010} and \reff{11060401} we have
\[
   \com{\tilde{\mathcal P},~ \comii{v}^{s+\frac{\gamma}{2}}}\in  {\rm Op}\inner{ S\inner{\comii{v}^{3s+\frac{3\gamma}{2}-1}\comii{\eta}^{s},~~\abs{dv}^2+\abs{d\eta}^2}}.
\]
This implies, with $\eps$ sufficiently small,
\begin{eqnarray*}
  \abs{\biginner{\com{\tilde{\mathcal P},~ \comii{v}^{s+\frac{\gamma}{2}}} f, ~\comii{v}^{s+\frac{\gamma}{2}}f}_{L^2}}&\lesssim & \norm {\comii{D_\eta}^s \comii v^{s+\gamma}f}_{L^2}\norm{\comii{v}^{3s+\gamma-1}f}_{L^2}\\& \lesssim& \eps \norm {\comii{D_\eta}^s \comii v^{s+\gamma}f}_{L^2}^2+\eps\norm{\comii{v}^{2s+\gamma}f}_{L^2} +C_\eps \norm{f}_{L^2}^2,
\end{eqnarray*}
where in the last inequality we used the interpolation inequality
\[
   \norm{\comii{v}^{3s+\gamma-1}f}_{L^2}\leq \eps \norm{\comii{v}^{2s+\gamma}f}_{L^2}+C_\eps \norm{f}_{L^2},
\]
due to $s<1$.  Combining \reff{11052910} we get
\begin{eqnarray*}
     &&\norm {\comii{D_\eta}^s \comii v^{s+\gamma}f}_{L^2}^2+\norm{\comii{v}^{2s+\gamma}f}_{L^2}^2\\
      &\lesssim&  \eps \norm {\comii{D_\eta}^s \comii v^{s+\gamma}f}_{L^2}^2+\eps\norm{\comii{v}^{2s+\gamma}f}_{L^2}^2 +C_\eps \norm{f}_{L^2}+ \norm{\tilde{\mathcal P} f}_{L^2}^2.
\end{eqnarray*}
Letting $\eps$ small enough gives the desired estimate \reff{11052931}. The proof is complete.
\end{proof}

\subsection{The second  part of the proof of  Proposition \ref{prp110511}}

The main result in this subsection is the following lemma.

 \begin{lemma}\label{prop11051601}
    Let $\tilde{\mathcal P}$ be given in \reff{11051401} with $a$, $b$ satisfying the assumptions \reff{assumption1} and \reff{assumption2}.  Then   for  all $f\in\mathcal S(\mathbb R_v^n)$ we have
  \begin{eqnarray} \label{11051601}
        \norm{\comii{v}^{\frac{\gamma}{1+2s}}\comii{\xi}^{\frac{2s}{1+2s}} f}_{L^2}\lesssim  \norm{\tilde{\mathcal P} f}_{L^2}   +\norm{f}_{L^2}.
\end{eqnarray}
 \end{lemma}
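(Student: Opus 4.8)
The plan is to run the H\'erau--Pravda-Starov multiplier scheme attached to the operator $M_\eps^s$ of \reff{11052716}. Fix $\eps>0$ (to be chosen small at the end) and $\xi$, and observe that on $\mathrm{supp}\,\varphi_\eps$ one has $\comii\xi\leq 2\eps\comii v^\gamma\comii\eta^{1+2s}$; raising this to the power $2s/(1+2s)$ and multiplying by $\comii v^{\gamma/(1+2s)}$ gives the pointwise inequality
\[
  \comii v^{\frac{\gamma}{1+2s}}\comii\xi^{\frac{2s}{1+2s}}\,\varphi_\eps \;\lesssim\; \eps^{\frac{2s}{1+2s}}\,\comii v^{\gamma}\comii\eta^{2s}\,\varphi_\eps \;\lesssim\; \eps^{\frac{2s}{1+2s}}\,a(v)\comii\eta^{2s}\,\varphi_\eps ,
\]
so that on the phase space region selected by $\varphi_\eps$ the weight $\comii v^{\gamma/(1+2s)}\comii\xi^{2s/(1+2s)}$ is dominated by a small multiple of the diffusion symbol. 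Accordingly I would split $f$ into the localized part, carried by $M_\eps^s f=(\varphi_\eps\comii\eta^s)^w f$, and the complementary part $(1-\varphi_\eps)^w f$, which is microlocalized where $\comii\xi\gtrsim\comii v^\gamma\comii\eta^{1+2s}$, that is, where $\comii\eta\lesssim(\comii\xi/(\eps\comii v^\gamma))^{1/(1+2s)}$ is bounded and the transport term dominates.

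For the localized part I would test $\tilde{\mathcal P}f$ against the nonnegative self-adjoint operator $(M_\eps^s)^*\comii v^\gamma M_\eps^s=(\comii v^{\gamma/2}M_\eps^s)^*(\comii v^{\gamma/2}M_\eps^s)$. Setting $h=\comii v^{\gamma/2}M_\eps^s f$ and pushing $\comii v^{\gamma/2}M_\eps^s$ across $\tilde{\mathcal P}$, this pairing reduces --- up to the commutator $\com{\tilde{\mathcal P},\comii v^{\gamma/2}M_\eps^s}$ --- to $\biginner{\tilde{\mathcal P}h,h}_{L^2}$ and to the commutator quantity of Lemma \ref{110503}. The commutator $\com{\tilde{\mathcal P},\comii v^{\gamma/2}M_\eps^s}$ splits as $\comii v^{\gamma/2}\com{\tilde{\mathcal P},M_\eps^s}+a(v)\com{(-\tilde\triangle_v)^s,\comii v^{\gamma/2}}M_\eps^s$, the first being exactly the object estimated in Lemma \ref{110503} (after absorbing $\comii v^{\gamma/2}$ into the weight $\comii v^\gamma$), the second being handled by the symbolic calculus of \S2.1 together with \reff{11052931}, using crucially that $s<1$ so that all the exponents of $\comii v$ produced can be interpolated below $\comii v^{2s+\gamma}$. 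The elliptic part $\biginner{(a(v)(-\tilde\triangle_v)^s+b(v))f,(M_\eps^s)^*\comii v^\gamma M_\eps^s f}_{L^2}$ is controlled using the Corollary of Lemma \ref{lem11053101} (estimate \reff{11051518}) and \reff{11052931}, while its transport part is controlled by Lemma \ref{lem11050601} and produces only the harmless term $\eps\norm{\comii v^\gamma\comii{D_v}^{2s}f}_{L^2}^2$. Combining these with the sharp G\aa rding inequality applied to the displayed pointwise bound --- which upgrades it to the operator inequality $\comii v^{\gamma/(1+2s)}\comii\xi^{2s/(1+2s)}\varphi_\eps^2$ being bounded by $\eps^{2s/(1+2s)}\bigl(a(v)\comii\eta^{2s}\varphi_\eps^2\bigr)$ modulo lower order errors --- I expect to arrive at a bound of the form $\norm{\comii v^{\gamma/(1+2s)}\comii\xi^{2s/(1+2s)}\varphi_\eps^w f}_{L^2}^2\lesssim \eps^{2s/(1+2s)}\bigl(\norm{\comii v^\gamma\comii{D_v}^{2s}f}_{L^2}^2+\norm{\tilde{\mathcal P}f}_{L^2}^2\bigr)+C_\eps\bigl(\norm{\tilde{\mathcal P}f}_{L^2}^2+\norm f_{L^2}^2\bigr)$, where the $C_\eps$-terms are assembled from \reff{11052931}.

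For the complementary part $(1-\varphi_\eps)^w f$ one has the pointwise bound $\comii v^{\gamma/(1+2s)}\comii\xi^{2s/(1+2s)}(1-\varphi_\eps)\lesssim\eps^{-1/(1+2s)}\comii\xi\comii\eta^{-1}(1-\varphi_\eps)$, so it suffices to control $\comii\xi\,\norm{\comii{D_\eta}^{-1}(1-\varphi_\eps)^w f}_{L^2}$. On this region $\tilde{\mathcal P}$ is elliptic of weight $\comii\xi\comii\eta^{-1}$ away from the degeneracy set $\{\tau+v\cdot\xi=0\}$, and on that set the diffusion term $a(v)\comii\eta^{2s}$ takes over since the degeneracy forces $\comii\eta$ to be large; running the same multiplier estimate as in Lemma \ref{110503}, now with $(1-\varphi_\eps)^w\comii{D_\eta}^{-2}(1-\varphi_\eps)^w$-type operators, gives $\comii\xi\norm{\comii{D_\eta}^{-1}(1-\varphi_\eps)^w f}_{L^2}\lesssim\norm{\tilde{\mathcal P}f}_{L^2}+\norm{\comii v^{2s+\gamma}f}_{L^2}+\norm{\comii v^\gamma\comii{D_v}^{2s}f}_{L^2}+\norm f_{L^2}$, with the weighted term disposed of by \reff{11052931}. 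Adding the two pieces, one then absorbs the terms with the factor $\eps^{2s/(1+2s)}$ into the left-hand side, after observing that $\norm{\comii v^\gamma\comii{D_v}^{2s}f}_{L^2}$ is itself majorized (again by the same scheme and \reff{11052931}, or because it is among the quantities bundled in Proposition \ref{prp110511}) by a constant times the left-hand side of \reff{11051601} plus $\norm{\tilde{\mathcal P}f}_{L^2}+\norm f_{L^2}$, so that the bootstrap closes for $\eps$ small. The main obstacle, beyond the lengthy but routine commutator bookkeeping (which relies throughout on $s<1$), is to make rigorous the operator-level passage from the pointwise trade ``$\comii v^{\gamma/(1+2s)}\comii\xi^{2s/(1+2s)}\lesssim\eps^{2s/(1+2s)}a(v)\comii\eta^{2s}$ on $\mathrm{supp}\,\varphi_\eps$'' to an $L^2$ inequality --- which forces one to track the sharp-G\aa rding remainders --- and to handle the degenerate complementary region, where the drift and the diffusion have to be played off against each other rather than used separately.
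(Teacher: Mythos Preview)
Your decomposition via $\varphi_\eps/(1-\varphi_\eps)$ is the one the paper reserves for the \emph{velocity} regularity in Section~3.3, where the $\xi$-estimate \reff{11051601} is already available as input. Using it to prove \reff{11051601} itself leaves a genuine gap on the complementary region $\mathrm{supp}(1-\varphi_\eps)$. There $\tilde{\mathcal P}$ is \emph{not} elliptic of weight $\comii\xi\comii\eta^{-1}$: the real part of its symbol is $a(v)\comii\eta^{2s}+b(v)$, which contains no $\xi$, and the imaginary part $\tau+v\cdot\xi$ vanishes on a hypersurface in $v$ (for each fixed $\tau,\xi$) regardless of $\eta$. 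In particular the degeneracy $\tau+v\cdot\xi=0$ imposes no constraint on $\eta$, so your assertion that ``on the degeneracy set the diffusion takes over since $\comii\eta$ is forced to be large'' is false on $\mathrm{supp}(1-\varphi_\eps)$, where $\comii\eta$ is precisely bounded. The proposed multiplier $(1-\varphi_\eps)^w\comii{D_\eta}^{-2}(1-\varphi_\eps)^w$ produces no $\xi$-gain when commuted with $i(\tau+v\cdot\xi)$. Finally, invoking $\norm{\comii v^\gamma\comii{D_v}^{2s}f}_{L^2}$ is circular at this stage: in the paper that quantity is controlled only \emph{after} \reff{11051601} (it enters through the term $J_2$ in the proof of Lemma~\ref{lem11060316}), and your bootstrap sketch does not close without the missing complementary-region estimate.

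What is missing is the hypoelliptic mechanism $\{\eta,\,v\cdot\xi\}=\xi$, which trades $\eta$-derivatives of a multiplier for $\xi$-gain via the transport term. The paper implements this with the bounded multiplier
\[
   p(v,\eta)=\frac{\comii v^{\gamma/(1+2s)}\,\xi\cdot\eta}{\comii\xi^{\,2-\frac{2s}{1+2s}}}\,\psi,\qquad
   \psi=\chi\!\left(\frac{\comii v^\gamma\comii\eta^{1+2s}}{\comii\xi}\right),
\]
which is localized to the \emph{low}-$\eta$ region (the opposite of $\varphi_\eps$). Since $\{p,\,\tau+v\cdot\xi\}=\xi\cdot\partial_\eta p$ has leading term $\comii v^{\gamma/(1+2s)}\abs\xi^2\comii\xi^{-2+\frac{2s}{1+2s}}\psi$, the identity ${\rm Re}\bigl(i(\tau+v\cdot\xi)f,\,p^wf\bigr)_{L^2}=\frac{1}{2\pi}\bigl(\{p,\,\tau+v\cdot\xi\}^w f,\,f\bigr)_{L^2}$ produces exactly $\comii v^{\gamma/(1+2s)}\comii\xi^{2s/(1+2s)}$ on the left of \reff{11051601}. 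The $(1-\psi)$-piece and the terms carrying $\xi\cdot\partial_\eta\psi$ lie in $S(\comii v^\gamma\comii\eta^{2s},\abs{dv}^2+\abs{d\eta}^2)$ and are absorbed by \reff{11050715}; the pairing of $a(v)(-\tilde\triangle_v)^s+b(v)$ against $p^w f$ is handled by \reff{11051518}. This proves \reff{11051601} using only the already established \reff{11052931}, with no appeal to $\norm{\comii v^\gamma\comii{D_v}^{2s}f}_{L^2}$ and no bootstrap.
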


We would make use of the multiplier method used in \cite{HK2011, Li11} to prove the above result.  Firstly we  need to find a suitable multiplier. In what follows   let $\xi\in\mathbb R^n$ be fixed, and define  a  symbol  $p$   by setting
\begin{eqnarray}\label{11041511}
  p =p_{\xi}(v,\eta)=\frac{\comii{v}^{\gamma/(1+2s)}\xi\cdot\eta}{\comii{\xi}^{2-\frac{2s}{1+2s}}}\psi,
\end{eqnarray}
with $\psi$ given by
\begin{eqnarray}\label{11041512}
  \psi(v,\eta)&=&\chi\left(\frac{\comii{v}^{\gamma}\comii{\eta}^{1+2s}}{\comii{\xi} }\right),
\end{eqnarray}
where  $\chi\in C_0^\infty(\mathbb R;~[0,1])$ such that $\chi=1$ in
$[-1,1]$ and supp $\chi \subset[-2,2]$.

\begin{lemma}\label{lem11051630}
    Let  $p$, $\psi$ be given above. Then  one has $p$, $\psi\in S(1, \abs{dv}^2+\abs{d\eta}^2)$ uniformly with respect to $\xi$.
\end{lemma}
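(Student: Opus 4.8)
The plan is to check the symbol bounds $\abs{\partial_v^\alpha\partial_\eta^\beta\psi}\leq C_{\alpha,\beta}$ and $\abs{\partial_v^\alpha\partial_\eta^\beta p}\leq C_{\alpha,\beta}$ directly, with the constants $C_{\alpha,\beta}$ independent of $\xi$. Everything rests on one elementary observation: setting $g=g_\xi(v,\eta)=\comii v^{\gamma}\comii\eta^{1+2s}/\comii\xi$, the cutoff $\psi$ in \reff{11041512} equals $\chi(g)$ and is supported in $\set{0\leq g\leq 2}$, so $\comii v^{\gamma}\comii\eta^{1+2s}\lesssim\comii\xi$ on $\mathrm{supp}\,\psi$, and hence also on $\mathrm{supp}\,p$; equivalently $\comii v^{\gamma/(1+2s)}\comii\eta\lesssim\comii\xi^{1/(1+2s)}$ there.

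First I would treat $\psi=\chi(g)$. Because $\partial_v^\alpha\comii v^{\gamma}=O(\comii v^{\gamma-\abs\alpha})$ and $\partial_\eta^\beta\comii\eta^{1+2s}=O(\comii\eta^{1+2s-\abs\beta})$ for every multi-index (regardless of the sign of $\gamma$), Leibniz' rule gives $\abs{\partial_v^\alpha\partial_\eta^\beta g}\lesssim\comii v^{\gamma-\abs\alpha}\comii\eta^{1+2s-\abs\beta}\comii\xi^{-1}=\comii v^{-\abs\alpha}\comii\eta^{-\abs\beta}\,g$ for all $\alpha,\beta$. By the Fa\`a di Bruno formula, $\partial_v^\alpha\partial_\eta^\beta\psi$ with $(\alpha,\beta)\neq(0,0)$ is a finite sum of terms $\chi^{(k)}(g)\prod_{i=1}^{k}\partial_v^{\alpha_i}\partial_\eta^{\beta_i}g$ with $k\geq1$, $\sum_i\alpha_i=\alpha$, $\sum_i\beta_i=\beta$ and each $(\alpha_i,\beta_i)\neq(0,0)$; the functions $\chi^{(k)}$ are bounded, each such term is supported where $g\leq2$, and there $\comii v^{-\abs{\alpha_i}}\comii\eta^{-\abs{\beta_i}}g\lesssim\comii v^{-\abs{\alpha_i}}\comii\eta^{-\abs{\beta_i}}$, so the term is $\lesssim\prod_i\comii v^{-\abs{\alpha_i}}\comii\eta^{-\abs{\beta_i}}=\comii v^{-\abs\alpha}\comii\eta^{-\abs\beta}\lesssim1$; for $(\alpha,\beta)=(0,0)$ simply $\abs\psi\leq\norm{\chi}_{L^\infty}=1$. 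This yields $\abs{\partial_v^\alpha\partial_\eta^\beta\psi}\lesssim\comii v^{-\abs\alpha}\comii\eta^{-\abs\beta}\lesssim1$ uniformly in $\xi$, i.e. $\psi\in S(1,\abs{dv}^2+\abs{d\eta}^2)$.

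Next I would deal with $p=q\,\psi$, where $q=q_\xi(v,\eta)=\comii v^{\gamma/(1+2s)}(\xi\cdot\eta)\,\comii\xi^{-(2-\frac{2s}{1+2s})}$. The key numerology is that the powers of $\comii\xi$ are tuned so that $\frac1{1+2s}+1-\bigl(2-\frac{2s}{1+2s}\bigr)=0$. Since in $q$ the operator $\partial_v^{\alpha'}$ hits only $\comii v^{\gamma/(1+2s)}$ and $\partial_\eta^{\beta'}$ only the linear factor $\xi\cdot\eta$, which is annihilated as soon as $\abs{\beta'}\geq2$, one has $\partial_v^{\alpha'}\partial_\eta^{\beta'}q=0$ for $\abs{\beta'}\geq2$ and $\abs{\partial_v^{\alpha'}\partial_\eta^{\beta'}q}\lesssim\comii v^{\gamma/(1+2s)-\abs{\alpha'}}\,\abs\xi\,\comii\eta^{1-\abs{\beta'}}\,\comii\xi^{-(2-\frac{2s}{1+2s})}$ for $\abs{\beta'}\leq1$. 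Combining this through Leibniz with the bound $\abs{\partial_v^{\alpha-\alpha'}\partial_\eta^{\beta-\beta'}\psi}\lesssim\comii v^{-(\abs\alpha-\abs{\alpha'})}\comii\eta^{-(\abs\beta-\abs{\beta'})}$ from the previous step, and inserting on $\mathrm{supp}\,\psi$ the bounds $\comii v^{\gamma/(1+2s)}\comii\eta\lesssim\comii\xi^{1/(1+2s)}$ and $\abs\xi\leq\comii\xi$, the three powers of $\comii\xi$ cancel by the identity above and one is left with $\abs{\partial_v^\alpha\partial_\eta^\beta p}\lesssim\comii v^{-\abs\alpha}\comii\eta^{-\abs\beta}\lesssim1$ uniformly in $\xi$, that is $p\in S(1,\abs{dv}^2+\abs{d\eta}^2)$.

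The only genuine difficulty is bookkeeping rather than anything conceptual: one must carry the localization $\comii v^{\gamma}\comii\eta^{1+2s}\lesssim\comii\xi$ through the Fa\`a di Bruno expansion so that the a~priori growth $\comii v^{\gamma}\comii\eta^{1+2s}$ of the inner argument turns into a bounded factor, and then keep precise track of the $\comii\xi$-exponents so that they cancel exactly in $p$; both become routine once the support observation is made. (Incidentally, the same computation in fact gives the slightly stronger statement that the derivatives of $\psi$ and of $p$ decay like $\comii v^{-\abs\alpha}\comii\eta^{-\abs\beta}$, but only membership in $S(1,\abs{dv}^2+\abs{d\eta}^2)$ is needed in the sequel.)
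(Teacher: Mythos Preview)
Your proof is correct and is precisely the ``straightforward verification'' the paper alludes to (the paper's own proof is a single sentence to that effect). Your argument via Fa\`a di Bruno for $\psi=\chi(g)$, the support localization $\comii v^{\gamma}\comii\eta^{1+2s}\lesssim\comii\xi$, and the exponent bookkeeping $\frac{1}{1+2s}+1-\bigl(2-\frac{2s}{1+2s}\bigr)=0$ for $p$ are exactly what one would write out, and the slightly stronger decay $\comii v^{-\abs\alpha}\comii\eta^{-\abs\beta}$ you obtain en route is indeed what is implicitly used later in the paper.
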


\begin{proof}
  It is just a straightforward verification.
\end{proof}

\begin{lemma}\label{lem042101}
 Let  $\psi$ be given in   \reff{11041512}. Then for all $\abs\alpha+\abs\beta\geq0$ the following inequality
  \begin{eqnarray}\label{11040806}
    \abs{\partial_v^\alpha\partial_\eta^\beta\inner{\xi\cdot\partial_\eta\psi}}\lesssim \comii{v}^{\gamma}\comii\eta^{2s}
  \end{eqnarray}
  holds uniformly with respect to   $\xi$.
\end{lemma}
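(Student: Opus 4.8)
The plan is to verify inequality \reff{11040806} by a direct but careful examination of the cutoff structure of $\psi$, exploiting the constraint that $\comii v^\gamma\comii\eta^{1+2s}\lesssim\comii\xi$ on the support of any $\eta$-derivative of $\chi$. First I would compute $\xi\cdot\partial_\eta\psi$ explicitly: since $\psi=\chi\bigl(\comii v^\gamma\comii\eta^{1+2s}/\comii\xi\bigr)$, the chain rule gives
\[
  \xi\cdot\partial_\eta\psi=\chi'\!\left(\frac{\comii v^\gamma\comii\eta^{1+2s}}{\comii\xi}\right)\frac{(1+2s)\comii v^\gamma\comii\eta^{2s-1}(\eta\cdot\xi)}{\comii\xi}.
\]
The factor $\chi'$ is supported where $\comii v^\gamma\comii\eta^{1+2s}\approx\comii\xi$, in particular where $\comii v^\gamma\comii\eta^{1+2s}\leq 2\comii\xi$, so $\comii v^\gamma\comii\eta^{2s-1}/\comii\xi\lesssim \comii\eta^{2s-1}/\comii\eta^{1+2s}=\comii\eta^{-2}$, and using $\abs{\eta\cdot\xi}\leq\comii\eta\comii\xi$ together with the same support bound $\comii\xi\lesssim\comii v^\gamma\comii\eta^{1+2s}$ one obtains $\abs{\xi\cdot\partial_\eta\psi}\lesssim\comii v^\gamma\comii\eta^{2s}$. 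This handles the case $\abs\alpha+\abs\beta=0$.

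Next I would treat the general derivatives $\partial_v^\alpha\partial_\eta^\beta(\xi\cdot\partial_\eta\psi)$. The key observation is that each $\partial_v$ or $\partial_\eta$ hitting the argument $\comii v^\gamma\comii\eta^{1+2s}/\comii\xi$ of $\chi$ (or of $\chi'$, $\chi''$, etc.) produces either a factor gaining $\comii v^{-1}$ (from $\partial_v\comii v^\gamma$, since $\partial_v\comii v^\gamma=\gamma v\comii v^{\gamma-2}$, giving relative size $\comii v^{-1}$) or a factor gaining $\comii\eta^{-1}$ (from $\partial_\eta\comii\eta^{1+2s}$), while on the support of $\chi^{(k)}$ for $k\geq1$ we still have $\comii v^\gamma\comii\eta^{1+2s}\approx\comii\xi$, so each such differentiated argument stays bounded. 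The prefactor $(1+2s)\comii v^\gamma\comii\eta^{2s-1}(\eta\cdot\xi)/\comii\xi$ likewise loses $\comii v^{-1}$ per $v$-derivative and $\comii\eta^{-1}$ per $\eta$-derivative, while remaining bounded by $C\comii v^\gamma\comii\eta^{2s}$ by the argument of the previous paragraph. Organizing this by the Faà di Bruno / Leibniz expansion, every term in $\partial_v^\alpha\partial_\eta^\beta(\xi\cdot\partial_\eta\psi)$ is a product of: some $\chi^{(k)}$ evaluated at the bounded argument, derivatives of that argument (each bounded, with extra decay), and derivatives of the prefactor; collecting the bounds gives $\abs{\partial_v^\alpha\partial_\eta^\beta(\xi\cdot\partial_\eta\psi)}\lesssim\comii v^{\gamma-\abs\alpha}\comii\eta^{2s-\abs\beta}\leq\comii v^\gamma\comii\eta^{2s}$, which is even stronger than claimed.

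The main obstacle, such as it is, lies in bookkeeping rather than in any genuine analytic difficulty: one must be careful that differentiating $\comii\eta^{1+2s}$ or $\comii\eta^{2s-1}$ in the regime $1+2s>0$ but possibly $2s-1<0$ still yields symbols of the expected order (which it does, since $\comii\eta$ is an elliptic symbol of order $1$ with all the right derivative bounds), and that the factor $\comii v^\gamma$ with $\gamma$ of arbitrary sign still behaves as an admissible weight in $v$ with $\abs{\partial_v^\alpha\comii v^\gamma}\lesssim\comii v^{\gamma-\abs\alpha}$. Since the paper has already established (via \reff{assumption2} and symbolic calculus) that such weight functions lie in the expected symbol classes, and since Lemma \ref{lem11051630} records $\psi\in S(1,\abs{dv}^2+\abs{d\eta}^2)$, there is no obstacle of substance; I would simply write ``It is a straightforward verification using the support property $\comii v^\gamma\comii\eta^{1+2s}\lesssim\comii\xi$ on $\mathrm{supp}\,\chi^{(k)}$, $k\geq0$, and the computation of $\xi\cdot\partial_\eta\psi$ above,'' possibly displaying the explicit formula for $\xi\cdot\partial_\eta\psi$ to make the key cancellation $\comii\eta^{2s-1}\cdot\comii\eta^{-1}\cdot\comii\eta=\comii\eta^{2s-1}$ (combined with $\comii\xi\lesssim\comii v^\gamma\comii\eta^{1+2s}$) transparent to the reader.
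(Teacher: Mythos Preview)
Your proposal is correct and follows exactly the same approach as the paper: the paper simply displays the explicit formula
\[
  \xi\cdot\partial_\eta\psi=\frac{(2s+1)\comii{v}^{\gamma}\comii \eta^{2s-1} \xi\cdot\eta }{\comii{\xi}}\, \chi'\!\left(\frac{\comii{v}^{\gamma}\comii{\eta}^{2s+1}}{\comii{\xi}}\right)
\]
and then says ``by direct computation, \reff{11040806} follows,'' whereas you have spelled out precisely that direct computation using the support constraint $\comii v^\gamma\comii\eta^{1+2s}\approx\comii\xi$ on $\mathrm{supp}\,\chi^{(k)}$ for $k\geq 1$. (One cosmetic point: in your final sentence you write $k\geq 0$, but since $\xi\cdot\partial_\eta\psi$ already carries a $\chi'$, only $\chi^{(k)}$ with $k\geq 1$ appear, which is what your argument actually uses.)
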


\begin{proof}
Note that
\[
  \xi\cdot\partial_\eta\psi=\frac{(2s+1)\comii{v}^{\gamma}\comii \eta^{2s-1} \xi\cdot\eta }{\comii{\xi}} \chi'\left(\frac{\comii{v}^{\gamma}\comii{\eta}^{2s+1}}{\comii{\xi}}\right)
\]
Then by  direct computation, \reff{11040806} follows.
The proof of Lemma \ref{lem042101} is thus complete.
\end{proof}

 The rest of this subsection is occupied by

\begin{proof}[Proof of Lemma \ref{prop11051601}]
  Let $f\in \mathcal S(\mathbb R^n_v)$ and let $p ^{w}$ be the Weyl quantization of the symbol $p $ given in \reff{11041511}.  Then using \reff{11051518} gives
  \[
      \abs{\biginner{a(v)(-\tilde\triangle_v)^{s}f+b~ f,~ p^w f}_{L^2}}\lesssim \abs{\biginner{\tilde{\mathcal P} f,~f}_{L^2}}+\norm{f}_{L^2}^2.
  \]
  This together with  the relation
  \begin{eqnarray*}
     {\rm Re}~\biginner{i \inner{\tau+v\cdot\xi} f,~p^w f}_{L^2} = {\rm Re}~\biginner{\tilde{\mathcal P} f,~p^w f}_{L^2}-{\rm Re}~\biginner{a(v)(-\tilde\triangle_v)^{s}f+b~ f,~ p^w f}_{L^2}
  \end{eqnarray*}
  yields
  \begin{eqnarray}\label{11040807}
    {\rm Re}~\biginner{i \inner{\tau+v\cdot\xi} f,~p^w f}_{L^2}\lesssim \abs{\biginner{\tilde{\mathcal P} f,~ f}_{L^2}}+\abs{\biginner{\tilde{\mathcal P} f,~ p^w  f}_{L^2}}+\norm{f}_{L^2}^2.
  \end{eqnarray}
  Next we will give a lower bound of the term on the left side. Observe that
  \begin{eqnarray}\label{11040808}
    {\rm Re}~\biginner{i \inner{\tau+v\cdot\xi} f,~p^w f}_{L^2}=\frac{1}{2\pi}\biginner{\big\{p,~\tau+v\cdot\xi
    \big\}^{w}f,~f}_{L^2},
  \end{eqnarray}
  where $\set{\cdot,~\cdot}$ is the Poisson bracket defined in \reff{11051505}. Direct calculus  shows
  \begin{eqnarray*}
     \big\{p,~\tau+v\cdot\xi\big\}
    &=&\frac{\comii{v}^{\gamma/(1+2s)}\abs\xi^2}{\comii{\xi}^{2-\frac{2s}{1+2s}}}\psi+\frac{\comii{v}^{\gamma/(1+2s)}\xi\cdot\eta}{\comii{\xi} ^{2-\frac{2s}{1+2s}}}\xi\cdot\partial_\eta\psi\\  &=& \comii{v}^{\gamma/(1+2s)}\comii{\xi}^{2s/(1+2s)}\psi-\frac{\comii{v}^{\gamma/(1+2s)}}{\comii{\xi} ^{2-\frac{2s}{1+2s}}}\psi+\frac{\comii{v}^{\gamma/(1+2s)}\xi\cdot\eta}{\comii{\xi} ^{2-\frac{2s}{1+2s}}}\xi\cdot\partial_\eta\psi\\  &=& \comii{v}^{\gamma/(1+2s)}\comii{\xi}^{2s/(1+2s)}-\comii{v}^{\gamma/(1+2s)}\comii{\xi}^{2s/(1+2s)}(1-\psi)\\
    &&-\frac{\comii{v}^{\gamma/(1+2s)}}{\comii{\xi} ^{2-\frac{2s}{1+2s}}}\psi+\frac{\comii{v}^{\gamma/(1+2s)}\xi\cdot\eta}{\comii{\xi} ^{2-\frac{2s}{1+2s}}}\xi\cdot\partial_\eta\psi.
    \end{eqnarray*}
   The above equalities  along with \reff{11040807} and \reff{11040808} yield
 \begin{eqnarray}\label{11052920}
   \norm{ \comii{v}^{\frac{\gamma}{2+4s}}\comii{\xi}^{\frac{s}{1+2s}} f}_{L^2}^2 \lesssim \sum_{j=1}^3 K_j +  \abs{\biginner{\tilde{\mathcal P} f,~ f}_{L^2}}+\abs{\biginner{\tilde{\mathcal P} f,~ p^w  f}_{L^2}}+\norm{f}_{L^2}^2,
\end{eqnarray}
with
\begin{eqnarray*}
  K_1&=&\inner{\inner{\comii{v}^{\gamma/(1+2s)}\comii{\xi}^{2s/(1+2s)}(1-\psi)}^w f,~~f}_{L^2},\\
  K_2&=&\inner{\biginner{\frac{\comii{v}^{\gamma/(1+2s)}}{\comii{\xi} ^{2-\frac{2s}{1+2s}}}\psi}^w f,~~f}_{L^2},\\
  K_3&=&-\inner{\biginner{\frac{\comii{v}^{\gamma/(1+2s)}\xi\cdot\eta}{\comii{\xi} ^{2-\frac{2s}{1+2s}}}\xi\cdot\partial_\eta\psi}^w f,~~f}_{L^2}.
\end{eqnarray*}
Note that
 \[
     \comii{\xi}^{2s/(1+2s)} \leq  \comii{v}^{2s\gamma/(1+2s)}\comii{\eta}^{2s}
    \]
on the support of $\partial_v^\alpha\partial_\eta^\beta(1-\psi)$ with $\abs\alpha+\abs\beta\geq 0$. Then by virtue of the  conclusion that $\psi\in S(1, \abs{dv}^2+\abs{d\eta}^2)$ uniformly with respect to $\xi$ in Lemma \ref{lem11051630}, we have
\[
   \comii{v}^{\gamma/(1+2s)}\comii{\xi}^{2s/(1+2s)}(1-\psi) \in S\inner{\comii{v}^{\gamma}\comii{\eta}^{2s}, ~\abs{dv}^2+\abs{d\eta}^2}
\]
uniformly with respect to $\xi$ . This implies
\[
   \comii{D_\eta}^{-s}\comii{v}^{-\gamma/2}\inner{\comii{v}^{\gamma/(1+2s)}\comii{\xi}^{2s/(1+2s)}(1-\psi)}^w \comii{v}^{-\gamma/2}\comii{D_\eta}^{-s}
   \in {\rm Op}\inner{S\inner{1, ~~\abs{dv}^2+\abs{d\eta}^2}},
\]
and thus
\begin{eqnarray}\label{11052921}
   K_1\lesssim \norm{\comii{D_\eta}^s\comii v^{\gamma/2}f}_{L^2}^2  \lesssim \abs{\biginner{\tilde{\mathcal P} f,~ f}_{L^2}}+\norm{f}_{L^2}^2,
\end{eqnarray}
where the last inequality follows from \reff{11050715}.
Furthermore since
\[
   \frac{\comii{v}^{\gamma/(1+2s)}}{\comii{\xi} ^{2-\frac{2s}{1+2s}}} \leq \frac{\comii{\xi}^{1/(1+2s)}}{\comii{\xi} ^{2-\frac{2s}{1+2s}}}\leq 1
\]
on the support of $ \psi$, then combining the fact that that $\psi\in S(1, \abs{dv}^2+\abs{d\eta}^2)$ uniformly with respect to $\xi$ we conclude
\[
    \frac{\comii{v}^{\gamma/(1+2s)}}{\comii{\xi} ^{2-\frac{2s}{1+2s}}}\psi \in S\inner{1, ~~\abs{dv}^2+\abs{d\eta}^2},
\]
which implies
\begin{eqnarray}\label{11052922}
   K_2\lesssim \norm{f}_{L^2}^2.
\end{eqnarray}
It remains to treat $K_3$. Direct verification shows
\[
   \abs{\partial_v^\alpha\partial_\eta^\beta \biginner{\frac{\comii{v}^{\gamma/(1+2s)}\xi\cdot\eta}{\comii{\xi} ^{2-\frac{2s}{1+2s}}}}}\leq 1
\]
on the support of $ \psi$.  This along with \reff{11040806} gives
\[
   \frac{\comii{v}^{\gamma/(1+2s)}\xi\cdot\eta}{\comii{\xi} ^{2-\frac{2s}{1+2s}}}\xi\cdot\partial_\eta\psi\in  S\inner{\comii{v}^{\gamma}\comii{\eta}^{2s}, ~~\abs{dv}^2+\abs{d\eta}^2}.
\]
As a result, repeating the arguments used in the treatment of $K_1$ yields
\[
   K_3\lesssim \norm{\comii{D_\eta}^s\comii v^{\gamma/2}f}_{L^2}^2  \lesssim \abs{\biginner{\tilde{\mathcal P} f,~ f}_{L^2}}+\norm{f}_{L^2}^2.
\]
This, together with \reff{11052920},   \reff{11052921} and \reff{11052922}, gives
\begin{eqnarray*}\label{11041105}
   \norm{ \comii{v}^{\frac{\gamma}{2+4s}}\comii{\xi}^{\frac{s}{1+2s}} f}_{L^2}^2 \lesssim  \abs{\biginner{\tilde{\mathcal P} f,~ f}_{L^2}}+\abs{\biginner{\tilde{\mathcal P} f,~ p^w  f}_{L^2}}+\norm{f}_{L^2}^2.
\end{eqnarray*}
Now applying the above inequality to the function $\comii v^{\frac{\gamma}{2+4s}}f$, we get
\begin{eqnarray*}
  &&\norm{\comii{v}^{\frac{\gamma}{1+2s}}\comii{\xi}^{\frac{s}{1+2s}}f}_{L^2}^2\\
   &\lesssim & \abs{\biginner{\tilde{\mathcal P} \comii v^{\frac{\gamma}{2+4s}}f,~ \comii v^{\frac{\gamma}{2+4s}}f}_{L^2}}+\abs{\biginner{\tilde{\mathcal P} \comii v^{\frac{\gamma}{2+4s}}f,~ p^w \comii v^{\frac{\gamma}{2+4s}}f}_{L^2}}+\norm{\comii v^{\frac{\gamma}{2+4s}}f}_{L^2}^2\\
   &\lesssim& \abs{\biginner{\tilde{\mathcal P} f,~\comii v^{\frac{\gamma}{1+2s}} f}_{L^2}}+\abs{\biginner{\tilde{\mathcal P} f,~\comii v^{\frac{\gamma}{2+4s}}p^w\comii v^{\frac{\gamma}{2+4s}}f}_{L^2}}+\norm{\comii v^{\frac{\gamma}{1+2s}}f}_{L^2}\norm{f}_{L^2}\\
   &&+\abs{\biginner{\com{\tilde{\mathcal P},~\comii v^{\frac{\gamma}{2+4s}}} f,~ \comii v^{\frac{\gamma}{2+4s}}f}_{L^2}}+\abs{\biginner{\com{\tilde{\mathcal P},~\comii v^{\frac{\gamma}{2+4s}}} f,~p^w \comii v^{\frac{\gamma}{2+4s}}f}_{L^2}}\\
   &\lesssim&   \norm{\tilde{\mathcal P} f}_{L^2}\norm{\comii{v}^{\frac{\gamma}{1+2s}} f}_{L^2}+\norm{\comii v^{\frac{\gamma}{1+2s}}f}_{L^2}\norm{f}_{L^2}\\
   &&+\abs{\biginner{\com{\tilde{\mathcal P},~\comii v^{\frac{\gamma}{2+4s}}} f,~ \comii v^{\frac{\gamma}{2+4s}}f}_{L^2}}+\abs{\biginner{\com{\tilde{\mathcal P},~\comii v^{\frac{\gamma}{2+4s}}} f,~p^w \comii v^{\frac{\gamma}{2+4s}}f}_{L^2}}.
\end{eqnarray*}
On the other hand,  by   \reff{11053010} and \reff{11060401}  we have,
\[
   \com{\tilde{\mathcal P},~\comii v^{\frac{\gamma}{2+4s}}}= a(v)\com{(-\tilde\triangle_v)^{s},~\comii v^{\frac{\gamma}{2+4s}}}\in {\rm Op}\inner{ S\inner{\comii{v}^{s+\gamma+\frac{\gamma}{2+4s}}\comii{\eta}^{s}, \abs{dv}^2+\abs{d\eta}^2}}.
\]
Then symbolic calculus gives
\begin{eqnarray*}
  &&\abs{\biginner{\com{\tilde{\mathcal P},~\comii v^{\frac{\gamma}{2+4s}}} f,~ \comii v^{\frac{\gamma}{2+4s}}f}_{L^2}}+\abs{\biginner{\com{\tilde{\mathcal P},~\comii v^{\frac{\gamma}{2+4s}}} f,~p^w \comii v^{\frac{\gamma}{2+4s}}f}_{L^2}}\\
  &\lesssim & \norm {\comii{D_\eta}^s \comii v^{s+\gamma}f}_{L^2}\norm{\comii{v}^{\frac{\gamma}{1+2s}}f}_{L^2},
\end{eqnarray*}
since $p\in S\inner{1, \abs{dv}^2+\abs{d\eta}^2}$ uniformly with respect to $\xi$.  Consequently combining the above inequalities, we have
\begin{eqnarray*}
    \norm{\comii{v}^{\frac{\gamma}{1+2s}}\comii{\xi}^{\frac{s}{1+2s}}f}_{L^2}^2 \lesssim  \norm{\comii{v}^{\frac{\gamma}{1+2s}} f}_{L^2} \biginner{ \norm{\tilde{\mathcal P} f}_{L^2}+\norm{ f}_{L^2}+ \norm {\comii{D_\eta}^s \comii v^{s+\gamma}f}_{L^2}}.
\end{eqnarray*}
Note that $\norm{\cdot}_{L^2}$ stands for the norm in $L^2(\mathbb R_v^n)$. Then multiplying both sides the factor $\comii{\xi}^{2s/(1+2s)}$,  we get
\begin{eqnarray*}
   \norm{\comii{v}^{\frac{\gamma}{1+2s}}\comii{\xi}^{\frac{2s}{1+2s}}f}_{L^2}^2
     \lesssim    \norm{\comii{v}^{\frac{\gamma}{1+2s}} \comii{\xi}^{\frac{2s}{1+2s}}f}_{L^2}\biginner{ \norm{\tilde{\mathcal P} f}_{L^2}+\norm{ f}_{L^2}+ \norm {\comii{D_\eta}^s \comii v^{s+\gamma}f}_{L^2}},
\end{eqnarray*}
and thus
\begin{eqnarray*}
     \norm{\comii{v}^{\frac{\gamma}{1+2s}}\comii{\xi}^{\frac{2s}{1+2s}}f}_{L^2}
    \lesssim  \norm{\tilde{\mathcal P} f}_{L^2}   +\norm{ f}_{L^2}+ \norm {\comii{D_\eta}^s \comii v^{s+\gamma}f}_{L^2}\lesssim  \norm{\tilde{\mathcal P} f}_{L^2}   +\norm{ f}_{L^2},
\end{eqnarray*}
where the last inequality follows from \reff{11052931}. This gives the desired estimate \reff{11051601}, completing the proof of Lemma \ref{prop11051601}.
\end{proof}

\subsection{End of the proof of  Proposition \ref{prp110511}}

In view of \reff{11052931} and \reff{11051601}, the proof  of  Proposition \ref{prp110511} will be complete if we could show the following
lemma.

\begin{lemma}\label{lem11060316}
  Let $\tilde{\mathcal P}$ be given in \reff{11051401} with $a$, $b$ satisfying the assumptions \reff{assumption1} and \reff{assumption2}.   Then   for  all $f\in\mathcal S(\mathbb R_v^n)$ we have
  \begin{eqnarray}\label{11053050}
        \norm{\comii{v}^{\frac{\gamma-2s}{1+2s}}\comii{\tau}^{\frac{2s}{1+2s}} f}_{L^2 }+\norm{\comii{v}^{\gamma}\comii{D_v}^{2s}f}_{L^2 } \lesssim      \norm{\tilde{\mathcal P}f}_{L^2 }+\norm{ f}_{L^2 }.
  \end{eqnarray}
\end{lemma}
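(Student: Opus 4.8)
The plan is to establish the two bounds in \reff{11053050} in turn: first the regularity in the velocity variable, $\norm{\comii v^\gamma\comii{D_v}^{2s}f}_{L^2}\lesssim\norm{\tilde{\mathcal P}f}_{L^2}+\norm f_{L^2}$, and then the regularity in the time variable as an essentially algebraic consequence of it together with \reff{11052931} and \reff{11051601}. The genuinely substantial part is the first bound, and it is where Lemma \reff{110503} is used.

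\emph{The $\comii v^\gamma\comii{D_v}^{2s}$ estimate.} I would split according to the cutoff $\varphi_\eps$, writing $\comii v^\gamma\comii{D_v}^{2s}f=\comii v^\gamma\comii{D_v}^{2s}\inner{\varphi_\eps^wf}+\comii v^\gamma\comii{D_v}^{2s}\inner{(1-\varphi_\eps)^wf}$, which is legitimate since $\varphi_\eps^w+(1-\varphi_\eps)^w=\mathrm{Id}$. On $\mathrm{supp}\,(1-\varphi_\eps)$ one has $\comii\xi\gtrsim\eps\comii v^\gamma\comii\eta^{1+2s}$, hence $\comii v^\gamma\comii\eta^{2s}\lesssim\eps^{-2s/(1+2s)}\comii v^{\gamma/(1+2s)}\comii\xi^{2s/(1+2s)}$, so that for each fixed $\eps$ the operator $\comii v^\gamma\comii{D_v}^{2s}(1-\varphi_\eps)^w$ lies in $\mathrm{Op}\inner{S\inner{\comii v^{\gamma/(1+2s)}\comii\xi^{2s/(1+2s)},~\abs{dv}^2+\abs{d\eta}^2}}$, and this term is controlled by \reff{11051601}. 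For the remaining piece, which equals $\comii v^\gamma\comii{D_v}^sM_\eps^sf$ up to operators of order $\comii v^{\gamma-1}\comii\eta^{2s-1}$ (disposed of by interpolation against $\reff{11052931}$ via Young), I would apply the coercivity estimate \reff{11050715} to $g:=\comii{v}^{\gamma/2}M_\eps^sf$, whose middle term is $\norm{\comii{D_\eta}^s\comii v^\gamma M_\eps^sf}_{L^2}^2$. Expanding $\tilde{\mathcal P}\comii v^{\gamma/2}M_\eps^s=\comii v^{\gamma/2}M_\eps^s\tilde{\mathcal P}+[\tilde{\mathcal P},\comii v^{\gamma/2}]M_\eps^s+\comii v^{\gamma/2}[\tilde{\mathcal P},M_\eps^s]$, I note that: the contribution of $\comii v^{\gamma/2}M_\eps^s\tilde{\mathcal P}f$ is $\bigl|\bigl(\tilde{\mathcal P}f,\,(\comii v^{\gamma/2}M_\eps^s)^*\comii v^{\gamma/2}M_\eps^sf\bigr)_{L^2}\bigr|\lesssim\norm{\tilde{\mathcal P}f}_{L^2}\norm{\comii v^\gamma\comii{D_v}^{2s}f}_{L^2}+(\text{l.o.t.})$, absorbable via a small Young constant; the contribution of $\comii v^{\gamma/2}[\tilde{\mathcal P},M_\eps^s]$ is, since $\comii v^{\gamma/2}$ is a real multiplication operator, exactly $\bigl([\tilde{\mathcal P},M_\eps^s]f,\,\comii v^\gamma M_\eps^sf\bigr)_{L^2}$, which by Lemma \reff{110503} is $\lesssim\eps\norm{\comii v^\gamma\comii{D_v}^{2s}f}_{L^2}^2+C_\eps\bigl(\norm{\comii{D_\eta}^s\comii v^{s+\gamma}f}_{L^2}^2+\norm f_{L^2}^2\bigr)$; and the contribution of $[\tilde{\mathcal P},\comii v^{\gamma/2}]M_\eps^s=a(v)\com{(-\tilde\triangle_v)^s,\comii v^{\gamma/2}}M_\eps^s$ is handled by symbolic calculus together with \reff{assumption1}--\reff{assumption2}, the localization $\comii\xi\lesssim\eps\comii v^\gamma\comii\eta^{1+2s}$ on $\mathrm{supp}\,\varphi_\eps$, and interpolation against \reff{11052931} and \reff{11051601}. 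Fixing $\eps$ small enough then absorbs all the terms of critical order $\comii v^\gamma\comii\eta^{2s}$ into the left-hand side, while the remaining lower-order terms are bounded by $\norm{\tilde{\mathcal P}f}_{L^2}+\norm f_{L^2}$ using \reff{11050715}, \reff{11052931}, \reff{11051601}.

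\emph{The $\comii{v}^{(\gamma-2s)/(1+2s)}\comii{\tau}^{2s/(1+2s)}$ estimate.} Since $0<\tfrac{2s}{1+2s}<1$, subadditivity of $x\mapsto x^{2s/(1+2s)}$ and $\abs\tau\le\abs{\tau+v\cdot\xi}+\comii v\comii\xi$ give $\comii v^{\frac{\gamma-2s}{1+2s}}\comii\tau^{\frac{2s}{1+2s}}\lesssim\comii v^{\frac{\gamma-2s}{1+2s}}\comii{\tau+v\cdot\xi}^{\frac{2s}{1+2s}}+\comii v^{\frac{\gamma}{1+2s}}\comii\xi^{\frac{2s}{1+2s}}$, and the last term is controlled by \reff{11051601}. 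For the first term, observe that $i(\tau+v\cdot\xi)$ is the operator of multiplication by the real-linear function $\tau+v\cdot\xi$ of $v$, so rearranging \reff{11051401} gives the pointwise inequality $\abs{\tau+v\cdot\xi}\abs{f(v)}\le\abs{\tilde{\mathcal P}f(v)}+a(v)\abs{(-\tilde\triangle_v)^sf(v)}+b(v)\abs{f(v)}=:G(v)$ (using $a,b\geq0$), whence $\comii{\tau+v\cdot\xi}^{\frac{2s}{1+2s}}\abs f\lesssim\abs f+G^{\frac{2s}{1+2s}}\abs f^{\frac{1}{1+2s}}$. Writing $\comii v^{\frac{\gamma-2s}{1+2s}}=\inner{\comii v^{-2}}^{\frac{2s}{1+2s}}\inner{\comii v^{2s+\gamma}}^{\frac{1}{1+2s}}$ and applying H\"older with exponents $\tfrac{1+2s}{2s}$ and $1+2s$ yields $\norm{\comii v^{\frac{\gamma-2s}{1+2s}}\comii{\tau+v\cdot\xi}^{\frac{2s}{1+2s}}f}_{L^2}\lesssim\norm{\comii v^{\frac{\gamma-2s}{1+2s}}f}_{L^2}+\norm{\comii v^{-2}G}_{L^2}^{\frac{2s}{1+2s}}\norm{\comii v^{2s+\gamma}f}_{L^2}^{\frac{1}{1+2s}}$. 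Here $\comii v^{\frac{\gamma-2s}{1+2s}}\lesssim1+\comii v^{2s+\gamma}$, and $\norm{\comii v^{-2}G}_{L^2}\le\norm{\comii v^{-2}\tilde{\mathcal P}f}_{L^2}+\norm{\comii v^{-2}a(v)(-\tilde\triangle_v)^sf}_{L^2}+\norm{\comii v^{-2}b(v)f}_{L^2}$, where the first term is $\le\norm{\tilde{\mathcal P}f}_{L^2}$, the third is $\lesssim\norm{\comii v^{2s+\gamma-2}f}_{L^2}\le\norm{\comii v^{2s+\gamma}f}_{L^2}$ by \reff{assumption1}, and the second is $\lesssim\norm{\comii v^{2s+\gamma-2}(-\tilde\triangle_v)^sf}_{L^2}\lesssim\norm{\comii v^{\gamma}\comii{D_v}^{2s}f}_{L^2}$ because $2s-2<0$, so that $\comii v^{2s+\gamma-2}(-\tilde\triangle_v)^s\comii{D_v}^{-2s}\comii v^{-\gamma}\in\mathrm{Op}\inner{S\inner{\comii v^{2s-2},~\abs{dv}^2+\abs{d\eta}^2}}\subset\mathrm{Op}\inner{S\inner{1,~\abs{dv}^2+\abs{d\eta}^2}}$ is $L^2$-bounded (the symbol of $(-\tilde\triangle_v)^s$ divided by $\comii\eta^{2s}$ belongs to $S(1)$). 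Invoking \reff{11052931} and the $\comii v^\gamma\comii{D_v}^{2s}$ estimate just proven, Young's inequality then finishes \reff{11053050}.

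The main obstacle is the low-frequency (support of $\varphi_\eps$) piece of the $\comii v^\gamma\comii{D_v}^{2s}$ estimate. All the essential cancellation there is packaged in Lemma \reff{110503}, which is already available, so what remains is careful Weyl symbolic calculus: identifying precisely which error terms are of the critical order $\comii v^\gamma\comii\eta^{2s}$, verifying that each of them carries a factor (either $\eps$, coming from Lemma \reff{110503}, or a free Young parameter) that can be absorbed once $\eps$ has been frozen, and systematically discarding the many subcritical remainders by interpolation against \reff{11050715}, \reff{11052931} and \reff{11051601}. By contrast, the $t$-regularity bound is obtained from the $v$-regularity bound by the elementary splitting and pointwise estimate above, with no further hard analysis.
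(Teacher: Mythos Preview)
Your proposal is correct and follows essentially the same route as the paper. The only cosmetic differences are that the paper splits the symbol $\comii\eta^s=\varphi_\eps\comii\eta^s+(1-\varphi_\eps)\comii\eta^s$ (so that $M_\eps^s$ appears directly) rather than splitting $f$ via $\varphi_\eps^w$, and for the $\tau$-estimate the paper applies Young's inequality at the level of the weights $\comii v^{\frac{\gamma+2s}{1+2s}}\bigl(\comii v^{-\frac{4s}{1+2s}}\comii{\tau+v\cdot\xi}^{\frac{2s}{1+2s}}\bigr)$ rather than your pointwise bound followed by H\"older; both variants lead to the same three controlled pieces $\norm{\comii v^{-2}\tilde{\mathcal P}f}_{L^2}$, $\norm{\comii v^\gamma\comii{D_v}^{2s}f}_{L^2}$, $\norm{\comii v^{2s+\gamma}f}_{L^2}$.
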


\begin{proof}
Let $f\in\mathcal S(\mathbb R_v^n)$. We first treat the second term on the left hand side of \reff{11053050}.   By  \reff{11060401} one has
\begin{eqnarray*}
    \norm{\comii v^{\gamma}\comii{D_\eta}^{2s}f}_{L^2}^2
     &\lesssim & \norm{\comii{D_\eta}^{s} \comii v^{\gamma}\comii{D_\eta}^{s}f}_{L^2}^2 +\norm{\com{\comii{D_\eta}^{s},~\comii v^{\gamma}}\comii{D_\eta}^{s}f}_{L^2}^2\\
       &\lesssim & \norm{\comii{D_\eta}^{s}\comii v^{\gamma} \big(\comii{\eta}^{s}\big)^wf}_{L^2}^2 +\norm{ \comii v^{\gamma} \comii{D_\eta}^{s}f}_{L^2}^2.
\end{eqnarray*}
 Moreover for the last  term in the above inequality we have
\begin{eqnarray*}
      \norm{ \comii v^{\gamma} \comii{D_\eta}^{s}f}_{L^2}^2 &\lesssim  & \norm{ \comii v^{s+\gamma} \comii{D_\eta}^{s}f}_{L^2}^2
      \lesssim   \norm{\comii{D_\eta}^{s} \comii{v}^{s+\gamma} f}_{L^2}^2+\norm{\com{\comii{D_\eta}^{s},~ \comii{v}^{s+\gamma}} f}_{L^2}^2\\
     &\lesssim  & \norm{\comii{D_\eta}^{s} \comii{v}^{s+\gamma} f}_{L^2}^2+\norm{ \comii{v}^{2s+\gamma}f}_{L^2}^2\\
     &\lesssim  & \norm{\tilde{\mathcal P}f}_{L^2 }^2+\norm{ f}_{L^2 }^2,
\end{eqnarray*}
 the last inequality using \reff{11052931}.  As a result the desired upper bound for $\norm{\comii v^{\gamma} \comii{D_\eta}^{2s}f}_{L^2}$ will follow if  we could prove that, with  $\eps>0$ arbitrarily small,
\begin{eqnarray}\label{11053116}
  \norm{\comii{D_\eta}^{s}\comii v^{\gamma} \big(\comii{\eta}^{s}\big)^wf}_{L^2}^2\lesssim \eps\norm{\comii v^{\gamma} \comii{D_\eta}^{2s}f}_{L^2}^2+C_\eps\inner{\norm{\tilde{\mathcal P}f}_{L^2 }^2+\norm{ f}_{L^2 }^2}.
\end{eqnarray}
In order to show the above inequality  we
write
\begin{eqnarray}\label{11060101}
  \norm{\comii{D_\eta}^{s}\comii v^{\gamma} \big(\comii{\eta}^{s}\big)^wf}_{L^2}^2\lesssim J_1+J_2,
\end{eqnarray}
with
\begin{eqnarray*}
  J_1&=&\norm{\comii{D_\eta}^{s}\comii v^{\gamma} \big(\varphi_\eps \comii{\eta}^{s}\big)^wf}_{L^2}^2=\norm{\comii{D_\eta}^{s}\comii v^{\gamma} M_\eps^s f}_{L^2}^2,\\
  J_2&=&\norm{\comii{D_\eta}^{s}\comii v^{\gamma} \big((1-\varphi_\eps)\comii{\eta}^{s}\big)^wf}_{L^2}^2,
\end{eqnarray*}
where $M_\eps^s$ and $\varphi_\eps$ are defined in \reff{11052716} and \reff{11052701}.  Let's first  treat the term $J_2$.  Writing
 \begin{eqnarray*}
   J_2&=&\biginner{\comii{D_\eta}^{2s}\comii v^{\gamma}f,~~\, \big((1-\varphi_\eps)\comii{\eta}^{s}\big)^w\comii v^{\gamma} \big((1-\varphi_\eps)\comii{\eta}^{s}\big)^wf}_{L^2}\\
   &&+\biginner{\com{\comii{D_\eta}^{2s},~~\,\big((1-\varphi_\eps)\comii{\eta}^{s}\big)^w}\comii v^{\gamma}f,~~ \comii v^{\gamma} \big((1-\varphi_\eps)\comii{\eta}^{s}\big)^wf}_{L^2},
 \end{eqnarray*}
 we have by direct symbolic calculus
   \begin{eqnarray*}
   J_2&\leq&\norm{\comii v^{\gamma}\comii{D_\eta}^{2s}f}_{L^2} \norm{\big((1-\varphi_\eps)\comii{\eta}^{s}\big)^w\comii v^{\gamma} \big((1-\varphi_\eps)\comii{\eta}^{s}\big)^wf}_{L^2} \\
   &&+\norm{\comii v^{\gamma}\comii{D_\eta}^{2s}f}_{L^2} \norm{\comii v^{\gamma} \big((1-\varphi_\eps)\comii{\eta}^{s}\big)^wf}_{L^2}.
 \end{eqnarray*}
   Moreover observe that the symbols of the operators
   \[
    \big((1-\varphi_\eps)\comii{\eta}^{s}\big)^w\comii v^{\gamma} \big((1-\varphi_\eps)\comii{\eta}^{s}\big)^w ~~\,{\rm and}\,~~  \comii v^{\gamma} \big((1-\varphi_\eps)\comii{\eta}^{s}\big)^w
   \]
    belong  to
\begin{eqnarray*}
      S\inner{\eps^{-\frac{2s}{1+2s}} \comii{v}^{\frac{\gamma}{1+2s}}\comii{\xi}^{\frac{2s}{1+2s}},~\abs{dv}^2+\abs{d\eta}^2}
\end{eqnarray*}
uniformly with respect to $\eps$ and $\xi$, due to the fact that
\begin{eqnarray*}
   \comii{v}^\gamma\comii{\eta}^{1+2s}\leq \eps^{-1}\comii \xi
\end{eqnarray*}
on the support of $\partial_{v}^\alpha\partial^\beta_\eta(1-\varphi_\eps)$ with $\abs\alpha+\abs\beta\geq0$. Then
 \begin{eqnarray}\label{11053031}
 \begin{split}
   J_2&\lesssim
    \eps^{-\frac{2s}{1+2s}} \norm{\comii v^{\gamma}\comii{D_\eta}^{2s}f}_{L^2} \norm{\comii{v}^{\frac{\gamma}{1+2s}}\comii{\xi}^{\frac{2s}{1+2s}}f}_{L^2}^2\\
    &\lesssim \eps  \norm{\comii v^{\gamma}\comii{D_\eta}^{2s}f}_{L^2}^2+ C_\eps\inner{\norm{\tilde{\mathcal P}f}_{L^2 }^2+\norm{ f}_{L^2 }^2},
 \end{split}
 \end{eqnarray}
 the last inequality using \reff{11051601}.
 Next we  treat $J_1$.  Applying \reff{11050715} to the function $\comii{v}^{\gamma/2}M_\eps^s f$  gives
  \begin{eqnarray*}
    J_1
    &\lesssim& \abs{\biginner{\tilde{\mathcal P} \comii{v}^{\gamma/2}M_\eps^s f,~ \comii{v}^{\gamma/2}M_\eps^s f }_{L^2}}+\norm{\comii{v}^{\gamma/2}M_\eps^s f}_{L^2}^2 \\
    &\lesssim&  J_{1,1}+J_{1,2}+J_{1,3},
  \end{eqnarray*}
  with
  \begin{eqnarray*}
    J_{1,1}&=& \abs{\biginner{\com{\tilde{\mathcal P},~\comii{v}^{\gamma/2} } M_\eps^sf, ~\comii{v}^{\gamma/2}M_\eps^s f }_{L^2}},\\
    J_{1,2}&=& \abs{\biginner{\com{\tilde{\mathcal P},~M_\eps^s} f,~ \comii{v}^{\gamma}M_\eps^s f }_{L^2}},\\
    J_{1,3}&=& \norm{\tilde{\mathcal P} f}_{L^2} \norm{M_\eps^s\comii{v}^{\gamma}M_\eps^s f}_{L^2}+\norm{\comii{v}^{\gamma/2}M_\eps^s f}_{L^2}^2.
  \end{eqnarray*}
  Next we will proceed to handle the above three terms. It's clear that
  \[
     J_{1,3} \leq  \eps\norm{\comii v^{\gamma}\comii{D_\eta}^{2s}f}_{L^2}^2 + C_{\eps} \inner{ \norm{\tilde{\mathcal P}f}_{L^2}^2+\norm{f}_{L^2}^2},
  \]
  since
  \[
      M_\eps^s\comii{v}^{\gamma}M_\eps^s \comii{D_\eta}^{-2s}\comii v^{-\gamma} \in S\inner{1,~\abs{dv}^2+\abs{d\eta}^2}
  \]
  uniformly with respect to $\eps$ and $\xi$, by virtue of the conclusions in Lemma \ref{lemm110527}.
  Using \reff{11051110} in Lemma \ref{110503}  gives
   \begin{eqnarray*}
    J_{1,2} &\lesssim &  \eps\norm{\comii v^{\gamma}\comii{D_\eta}^{2s}f}_{L^2}^2 + C_{\eps}\inner{\norm {\comii{D_\eta}^s \comii v^{s+\gamma}f}_{L^2}^2+\norm{f}_{L^2}^2}\\
    &\lesssim &  \eps\norm{\comii v^{\gamma}\comii{D_\eta}^{2s}f}_{L^2}^2 + C_{\eps}\inner{\norm{\tilde{\mathcal P}f}_{L^2}^2+\norm{f}_{L^2}^2},
  \end{eqnarray*}
  the last inequality following from \reff{11052931}. Finally as for the term $J_{1,1}$,  by \reff{11053010} and \reff{11060401}   we have
  \[
      \com{\tilde{\mathcal P},~\comii{v}^{\gamma/2} }=a(v)\com{(-\tilde\triangle_v)^{s} ,~\comii{v}^{\gamma/2}}\in {\rm Op}\inner{S\inner{\comii v^{s+3\gamma/2}\comii\eta^s,~\abs{dv}^2+\abs{d\eta}^2}},
  \]
  due to $0<s<1$. This implies
  \begin{eqnarray*}
    J_{1,1}&=& \abs{\biginner{\com{\tilde{\mathcal P},~\comii{v}^{\gamma/2} } M_\eps^sf, ~\comii{v}^{\gamma/2}M_\eps^s f }_{L^2}} \\
    &\lesssim & \eps\norm{\comii v^{\gamma}\comii{D_\eta}^{2s}f}_{L^2}^2+C_\eps \norm{\comii{D_\eta}^s\comii{v}^{s+\gamma} f}_{L^2}^2\\
    &\lesssim & \eps\norm{\comii v^{\gamma}\comii{D_\eta}^{2s}f}_{L^2}^2+C_\eps \inner{\norm{\tilde{\mathcal P} f}_{L^2}^2+\norm{ f}_{L^2}^2},
  \end{eqnarray*}
  the last inequality following from \reff{11052931}.
  This along with the estimates on the terms $J_{1,2}$ and $J_{1,3}$  gives
  \begin{eqnarray*}
    J_{1}\lesssim J_{1,1}+J_{1,2}+J_{1,3}  \lesssim  \eps\norm{\comii v^{\gamma}\comii{D_\eta}^{2s}f}_{L^2}^2+C_\eps \inner{\norm{\tilde{\mathcal P} f}_{L^2}^2+\norm{ f}_{L^2}^2}.
  \end{eqnarray*}
  Then the desired estimate \reff{11053116} follows from the combination of  \reff{11060101}, \reff{11053031} and  the above inequality, giving  the upper bound for the  second term on the left hand side of \reff{11053050}; that is
\begin{eqnarray}\label{11060105}
  \norm{\comii v^{\gamma} \comii{D_\eta}^{2s}f}_{L^2}\lesssim \norm{\tilde{\mathcal P}f}_{L^2 } +\norm{ f}_{L^2 } .
\end{eqnarray}
  Now it remains to treat the first term. By computation, we have
 \begin{eqnarray*}
   \comii v^{\frac{\gamma-2s}{1+2s}}\comii\tau^{\frac{2s}{1+2s}} &\lesssim&  \comii v^{\frac{\gamma-2s}{1+2s}}\comii{\tau+v\cdot\xi}^{\frac{2s}{1+2s}} +\comii v^{\frac{\gamma-2s}{1+2s}}\comii{v\cdot\xi}^{\frac{2s}{1+2s}}\\
   &\lesssim & \comii v^{\frac{\gamma+2s}{1+2s}}\biginner{\comii v^{\frac{-4s}{1+2s}}\comii{\tau+v\cdot\xi}^{\frac{2s}{1+2s}}} +\comii v^{\frac{\gamma-2s}{1+2s}}\comii{v}^{\frac{2s}{1+2s}}\comii{\xi}^{\frac{2s}{1+2s}}\\
   &\lesssim & \comii v^{2s+\gamma}+ \comii v^{{-2}}\inner{\abs{\tau+v\cdot\xi}+1}  +\comii v^{\frac{\gamma}{1+2s}}\comii{\xi}^{\frac{2s}{1+2s}},
 \end{eqnarray*}
 where the last inequality follows from  the Young's inequality
 \[
     \comii v^{\frac{\gamma+2s}{1+2s}}\biginner{\comii v^{\frac{-4s}{1+2s}}\comii{\tau+v\cdot\xi}^{\frac{2s}{1+2s}}}  \leq \frac{\inner{\comii v^{\frac{\gamma+2s}{1+2s}}}^{1+2s}}{1+2s}+\frac{2s}{1+2s}\biginner{\comii v^{\frac{-4s}{1+2s}}\comii{\tau+v\cdot\xi}^{\frac{2s}{1+2s}}}^{(1+2s)/(2s)}.
 \]
 As a result, using the relation $i(\tau+v\cdot\xi) f=\tilde{\mathcal P}f-a(v)(-\tilde\triangle _v)^sf-b(v)f$, we compute
 \begin{eqnarray*}
   &&\norm{\comii v^{\frac{\gamma-2s}{1+2s}}\comii\tau^{\frac{2s}{1+2s}}f}_{L^2}\\
   & \lesssim& \norm{\comii v^{{-2}}\inner{\tau+v\cdot\xi} f}_{L^2} +
   \norm{ \comii v^{2s+\gamma} f}_{L^2}+\norm{\comii v^{-2} f}_{L^2}
    +\norm{ \comii v^{\frac{\gamma}{1+2s}}\comii{\xi}^{\frac{2s}{1+2s}} f}_{L^2}\\
   & \lesssim& \norm{\comii v^{{-2}}\tilde{\mathcal P} f}_{L^2} +\norm{\comii v^{{-2}} a(v)(-\tilde\triangle_v)^{s} f}_{L^2}+\norm{\comii v^{{-2}}b(v) f}_{L^2}\\
   & &  +\norm{ \comii v^{2s+\gamma} f}_{L^2}+\norm{\comii v^{-2} f}_{L^2}+\norm{ \comii v^{\frac{\gamma}{1+2s}}\comii{\xi}^{\frac{2s}{1+2s}} f}_{L^2}\\
   & \lesssim& \norm{\tilde{\mathcal P} f}_{L^2}+\norm{ f}_{L^2} +\norm{\comii v^{\gamma} (-\tilde\triangle_v)^{s} f}_{L^2}
    +\norm{ \comii v^{2s+\gamma} f}_{L^2}+\norm{ \comii v^{\frac{\gamma}{1+2s}}\comii{\xi}^{\frac{2s}{1+2s}} f}_{L^2},
\end{eqnarray*}
where the last inequality follows from  \reff{assumption1} and \reff{assumption2} . Then using  \reff{11052931},  \reff{11051601} and \reff{11060105} to control the last three terms,  we get
\begin{eqnarray*}
    \norm{\comii v^{\frac{\gamma-2s}{1+2s}}\comii\tau^{\frac{2s}{1+2s}}f}_{L^2}
     \lesssim  \norm{\tilde{\mathcal P} f}_{L^2} + \norm{ f}_{L^2},
\end{eqnarray*}
completing the proof of Lemma \ref{lem11060316}.
\end{proof}

\subsection{Proof of  Theorem  \ref{th1}}

Now we are ready to prove  Theorem  \ref{th1}, which can be deduced at once from the following lemma  by taking the partial Fourier  transform with respect to $t,~x$ variables.

\begin{lemma}\label{lem110603}
  Given $m\in \mathbb R$,  there exist a constant $C_m$ depending only on $m$,  such that for all $\tau\in\mathbb R$ and all $\xi\in\mathbb R^n$, and all $f\in\mathcal S\inner{\mathbb R_v^n}$ we have
  \begin{eqnarray*}
  \begin{split}
         &\norm{\Lambda^m\comii{v}^{\frac{\gamma-2s}{1+2s}}\comii{\tau}^{\frac{2s}{1+2s}} f}_{L^2 }+\norm{\Lambda^m \comii{v}^{\frac{\gamma}{1+2s}}\comii{\xi}^{\frac{2s}{1+2s}} f}_{L^2}+\norm{\Lambda^m \comii{v}^{\gamma}\comii{D_v}^{2s}f}_{L^2 }+\norm{\Lambda^m \comii{v}^{2s+\gamma}f}_{L^2 } \\
         &\lesssim ~  C_m\inner{ \norm{\Lambda^m \tilde{\mathcal P}f}_{L^2 }+\norm{ \Lambda^m f}_{L^2 }},
\end{split}
\end{eqnarray*}
where  $\norm{\cdot}_{L^2}$ stands for $\norm{\cdot}_{L^2(\mathbb R_v^n)}$, and
$
   \Lambda^m= \inner{1+\abs{\tau}^2+\abs\xi^2+\abs{D_\eta}^2}^{m\over2}.
$
\end{lemma}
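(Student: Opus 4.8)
The plan is to reduce the weighted-$H^m$ estimate to the already-proven $L^2$ estimate of Proposition \ref{prp110511} by conjugating $\tilde{\mathcal P}$ with the operator $\Lambda^m$ and controlling the resulting commutator. Since $\Lambda^m$ acts only through $\comii{D_\eta}$ (the $\tau,\xi$ parts are scalar multipliers and commute with everything), the symbol $\comii{\tau,\xi,\eta}^m = (1+\abs\tau^2+\abs\xi^2+\abs\eta^2)^{m/2}$ lies in $S\inner{\comii\eta^m,\abs{dv}^2+\abs{d\eta}^2}$ uniformly in $(\tau,\xi)$ — here it is convenient to note $\comii{\tau,\xi,\eta}\approx \comii{\tau}+\comii\xi+\comii\eta$ so that one extra factor of $\comii{\tau,\xi}$ can always be absorbed into the $\tilde{\mathcal P}f$-norm via the skew-adjoint part $i(\tau+v\cdot\xi)$. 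First I would apply Proposition \ref{prp110511} to the function $g=\Lambda^m f$, which gives the left-hand side of the claimed inequality (with $\Lambda^m$ pulled outside, up to harmless commutators between $\Lambda^m$ and the polynomial weights $\comii v^\ell$, which by \reff{11060401} are lower order) bounded by $\norm{\tilde{\mathcal P}\Lambda^m f}_{L^2}+\norm{\Lambda^m f}_{L^2}$.

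The core of the argument is then the commutator estimate
\[
   \norm{\com{\tilde{\mathcal P},~\Lambda^m} f}_{L^2}\lesssim \norm{\Lambda^m\tilde{\mathcal P} f}_{L^2}+\norm{\Lambda^m f}_{L^2},
\]
after which $\norm{\tilde{\mathcal P}\Lambda^m f}_{L^2}\leq \norm{\Lambda^m\tilde{\mathcal P}f}_{L^2}+\norm{\com{\tilde{\mathcal P},\Lambda^m}f}_{L^2}$ closes the estimate. Writing $\tilde{\mathcal P}=i(\tau+v\cdot\xi)+a(v)(-\tilde\triangle_v)^s+b(v)$, I would treat the three pieces separately. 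The commutator $\com{i(\tau+v\cdot\xi),\Lambda^m}$ equals $\frac{1}{2\pi}\{\tau+v\cdot\xi,\comii{\tau,\xi,\eta}^m\}^w = -\frac{1}{2\pi}(\xi\cdot\partial_\eta\comii{\tau,\xi,\eta}^m)^w$, whose symbol is $O\inner{\comii\xi\comii\eta^{m-1}}\leq O\inner{\comii{\tau,\xi}\comii\eta^{m-1}}$; since $\comii{\tau,\xi}\comii\eta^{m-1}\lesssim \comii{\tau,\xi,\eta}^{m}+\comii\xi\comii{\tau,\xi,\eta}^{m-1}$... more directly, I would pair $\com{i(\tau+v\cdot\xi),\Lambda^m}f$ against a test function and use that $\comii\xi\comii\eta^{-1}\Lambda^m\comii\eta$ has symbol controlled by $\Lambda^{m}+ \comii\xi\Lambda^{m-1}$, the extra $\comii\xi$ again being absorbed by $\com{i(\tau+v\cdot\xi),\cdot}$-type manipulations as in the proof of Lemma \ref{prop11051601}. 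For the diffusion terms, \reff{11060201} and \reff{11060202} give $\com{\comii{D_\eta}^m, a},\com{\comii{D_\eta}^m,b}\in{\rm Op}\inner{S\inner{\comii v^{2s+\gamma-1}\comii\eta^{m-1}}}$, so $\com{a(-\tilde\triangle_v)^s+b,~\Lambda^m}$ maps into ${\rm Op}\inner{S\inner{\comii v^{2s+\gamma-1}\comii\eta^{m+2s-1}}}$ (the $a(-\tilde\triangle_v)^s$ term also producing $a\,\com{(-\tilde\triangle_v)^s,\Lambda^m}$, which vanishes since both are $\comii{D_\eta}$-multipliers, plus $\com{a,\Lambda^m}(-\tilde\triangle_v)^s$). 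Factoring $\comii v^{2s+\gamma-1}\comii\eta^{m+2s-1}=\comii v^{-1}\comii\eta^{-1}\cdot\comii v^{2s+\gamma}\comii\eta^{m+2s}$ and using $\comii v^{2s+\gamma}\comii\eta^{m+2s}\lesssim \comii\eta^m(\comii v^{2s+\gamma}+\comii v^\gamma\comii\eta^{2s})$, one bounds $\norm{\com{a(-\tilde\triangle_v)^s+b,\Lambda^m}f}_{L^2}$ by $\eps$ times the good terms $\norm{\Lambda^m\comii v^{2s+\gamma}f}_{L^2}+\norm{\Lambda^m\comii v^{\gamma}\comii{D_\eta}^{2s}f}_{L^2}$ plus $C_\eps\norm{\Lambda^m f}_{L^2}$, which is exactly what the left-hand side already controls; absorbing for small $\eps$ finishes the proof.

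The main obstacle I anticipate is the transport commutator $\com{i(\tau+v\cdot\xi),\Lambda^m}$: because $\Lambda^m$ contains the factor $\comii\tau$ and $\comii\xi$ as genuine large parameters, one cannot afford to lose a full power of $\comii\xi$, and the naive bound $\abs{\xi\cdot\partial_\eta\comii{\tau,\xi,\eta}^m}\lesssim\comii\xi\comii\eta^{m-1}$ is not by itself of order $\Lambda^m$. The remedy — which I expect to be the technical heart of the proof — is to keep the transport structure: one pairs the commutator against $\Lambda^m f$ (or the relevant weighted test function), integrates by parts so that the stray $i(\tau+v\cdot\xi)$ is reassembled as $\tilde{\mathcal P}f$ minus the diffusion terms, and then the diffusion terms are again absorbed into the good left-hand-side quantities, exactly mirroring the bootstrap already carried out for the $\tau$-weight at the end of the proof of Lemma \ref{lem11060316}. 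Once this transport term is handled, the remaining steps are routine symbolic calculus using \reff{11053010}, \reff{11060401}, \reff{11060201}, \reff{11060202}, together with interpolation in $\comii v$ (valid since $s<1$), and the conclusion of Theorem \ref{th1} follows by taking the inverse partial Fourier transform in $(t,x)$, noting that $\norm{\Lambda^m\cdot}_{L^2(\mathbb R^{2n+1})}\approx\norm{\cdot}_{H^m(\mathbb R^{2n+1})}$ and that $\comii\tau,\comii\xi,\comii{D_\eta}$ become $\comii{D_t},\comii{D_x},\comii{D_v}$ respectively, while $\abs{D_t}^{2s/(1+2s)}\lesssim\comii\tau^{2s/(1+2s)}$ absorbs the constant-order discrepancy between $\abs\cdot$ and $\comii\cdot$.
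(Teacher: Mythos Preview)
Your overall strategy coincides with the paper's: apply Proposition~\ref{prp110511} to $\Lambda^m f$, bound $\norm{\tilde{\mathcal P}\Lambda^m f}_{L^2}$ by $\norm{\Lambda^m\tilde{\mathcal P}f}_{L^2}+\norm{\com{\tilde{\mathcal P},\Lambda^m}f}_{L^2}$, estimate the commutator piece by piece, and absorb the small-$\eps$ contributions into the left-hand side. The handling of the diffusion commutators $\com{a,\Lambda^m}(-\tilde\triangle_v)^s$ and $\com{b,\Lambda^m}$ via \reff{11060201}--\reff{11060202}, interpolation in $\comii v$ (using $s<1$), and $\eps$-absorption is exactly what the paper does.

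Where your proposal diverges is the transport commutator, and here you have manufactured a difficulty that does not exist. The point you are missing is that $\abs\xi\leq\lambda$ \emph{by the very definition} of $\lambda=(1+\abs\tau^2+\abs\xi^2+\abs\eta^2)^{1/2}$. Hence the commutator symbol
\[
   \xi\cdot\partial_\eta\lambda^m \;=\; m\,\lambda^{m-2}\,\xi\cdot\eta
\]
already satisfies $\abs{m\lambda^{m-2}\xi\cdot\eta}\leq m\abs\xi\abs\eta\,\lambda^{m-2}\leq m\lambda^m$, and all of its $\eta$-derivatives are likewise $O(\lambda^m)$ uniformly in $(\tau,\xi)$. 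Thus $\com{\Lambda^m,\, iv\cdot\xi}\in{\rm Op}\inner{S\inner{\lambda^m,\abs{dv}^2+\abs{d\eta}^2}}$ and one gets $\norm{\com{\Lambda^m,\, iv\cdot\xi}f}_{L^2}\lesssim\norm{\Lambda^m f}_{L^2}$ in one line. No integration by parts, no reassembly of $i(\tau+v\cdot\xi)$ into $\tilde{\mathcal P}$, and no bootstrap is required; the paper records this bound without comment. Your ``naive bound'' $\abs{\xi\cdot\partial_\eta\lambda^m}\lesssim\comii\xi\comii\eta^{m-1}$ discards precisely the structure that makes the term harmless.

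A related slip: you assert that $\comii{\tau,\xi,\eta}^m\in S\inner{\comii\eta^m,\abs{dv}^2+\abs{d\eta}^2}$ uniformly in $(\tau,\xi)$. This is false (take $\eta=0$, $m>0$). The correct statement, which the paper uses, is $\Lambda^m\in{\rm Op}\inner{S\inner{\lambda^m,\,\abs{dv}^2+\abs{d\eta}^2/\lambda^2}}$: the weight is $\lambda^m$, not $\comii\eta^m$, and the metric is the finer one. With this choice the commutators $\com{\Lambda^m,\comii v^\ell}$, $\com{\Lambda^m,a}$, $\com{\Lambda^m,b}$ gain a full factor of $\lambda^{-1}$ (not merely $\comii\eta^{-1}$), cf.\ \reff{11060205}--\reff{11060206}, and this is what makes the subsequent interpolation and $\eps$-absorption uniform in the parameters.
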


\begin{proof}
For any $\tau\in\mathbb R$ and any $\xi\in\mathbb R^n$, we denote
\[\lambda(\eta)=\lambda_{\tau,\xi}(\eta)=\inner{1+\abs{\tau}^2+\abs\xi^2+\abs{\eta}^2}^{1\over2}.\]
Then by direct verification we see  $ \Lambda^m\in {\rm Op}\inner{S\inner{\lambda^m, \abs{dv}^2+\frac{\abs{d\eta}^2}{\lambda^2}}}$ uniformly with respect to $\tau$ and $\xi$. Then symbolic calculus  (Theorem 2.3.8 and Corollary 2.3.10 of \cite{MR2599384}) shows that
\begin{eqnarray}\label{11060205}
  \forall ~\ell\in\mathbb R,\quad \com{\Lambda^m,~\comii v^\ell}\in {\rm Op}\inner{S\inner{\comii v^{\ell-1}\lambda^{m-1}, \abs{dv}^2+\abs{d\eta}^2}}
\end{eqnarray}
and that
\begin{eqnarray}\label{11060206}
   \com{\Lambda^m,~a},~\,\,\com{\Lambda^m,~b}\in {\rm Op}\inner{S\inner{\comii v^{2s+\gamma-1}\lambda^{m-1}, \abs{dv}^2+\abs{d\eta}^2}},
\end{eqnarray}
uniformly with respect to $\tau$ and $\xi$.  As a result,  combining  \reff{11053010}, \reff{11060206} and  the fact that $s<1$,  we have
\[
    \com{\Lambda^m,~a}(-\tilde\triangle_v)^s \comii v^{-(s+\gamma)} \comii{D_\eta}^{-s}\Lambda^{-(m-1+s)}  \in {\rm Op} \inner{S\inner{1, \abs{dv}^2+\abs{d\eta}^2}}.
\]
This along with the relation
\[
    \com{\Lambda^m,~a}(-\tilde\triangle_v)^s =\inner{\com{\Lambda^m,~a}(-\tilde\triangle_v)^s \comii v^{-(s+\gamma)} \comii{D_\eta}^{-s}\Lambda^{-(m-1+s)}}\Lambda^{m-1+s}\comii{D_\eta}^s\comii v^{s+\gamma},
\]
 implies
\begin{eqnarray}\label{11060310}
\begin{split}
  \norm{\com{\Lambda^m,~a}(-\tilde\triangle_v)^s f}_{L^2}&\lesssim  \norm{\Lambda^{m-1+s}\comii{D_\eta}^s \comii v^{s+\gamma }  f}_{L^2}\\&\lesssim
  \eps \norm{\Lambda^{m}\comii{D_\eta}^s\comii v^{s+\gamma } f}_{L^2}+C_\eps \norm{\Lambda^{m}\comii v^{s+\gamma } f}_{L^2},
\end{split}
\end{eqnarray}
the last inequality using the interpolation inequality that, with $\eps$ arbitrarily small,
\[
\norm{\Lambda^{m-1+s}\comii{D_\eta}^s\comii v^{s+\gamma } f}_{L^2}\lesssim \eps \norm{\Lambda^{m}\comii{D_\eta}^s\comii v^{s+\gamma } f}_{L^2}+C_\eps \norm{\Lambda^{m}\comii v^{s+\gamma } f}_{L^2}.
\]
By  virtue of \reff{11053010} and  \reff{11060205}, using the similar arguments as above  we could prove that
\begin{eqnarray*}
    \norm{\comii{D_\eta}^s\com{\Lambda^{m},~\comii v^{s+\gamma }} f}_{L^2}+  \norm{\com{\Lambda^{m},~\comii v^{s+\gamma }} f}_{L^2}.
    \lesssim  \norm{\comii v^{s+\gamma } \Lambda^{m} f}_{L^2}.
\end{eqnarray*}
As a result,
\begin{eqnarray*}
   &&  \norm{\Lambda^{m}\comii{D_\eta}^s\comii v^{s+\gamma } f}_{L^2}+   \norm{\Lambda^{m}\comii v^{s+\gamma } f}_{L^2} \\
   &\lesssim&   \norm{\comii{D_\eta}^s\comii v^{s+\gamma }  \Lambda^{m} f}_{L^2}+   \norm{\comii v^{s+\gamma }\Lambda^{m} f}_{L^2}\\
   &&+  \norm{\comii{D_\eta}^s\com{\Lambda^{m},~\comii v^{s+\gamma }} f}_{L^2}+    \norm{\com{\Lambda^{m},~\comii v^{s+\gamma }} f}_{L^2}\\
   &\lesssim&   \norm{\comii{D_\eta}^s\comii v^{s+\gamma }  \Lambda^{m} f}_{L^2}+   \norm{\comii v^{s+\gamma }\Lambda^{m} f}_{L^2}\\
   &\lesssim&   \norm{\comii{D_\eta}^s\comii v^{s+\gamma }  \Lambda^{m} f}_{L^2}+\eps'\norm{\comii v^{2s+\gamma }\Lambda^{m} f}_{L^2}+  C_{\eps'} \norm{\Lambda^{m} f}_{L^2},
\end{eqnarray*}
where   $\eps'$ are arbitrarily small,  and  the last inequality follows from  the interpolation inequality
\[
  \norm{\comii v^{s+\gamma }\Lambda^{m} f}_{L^2} \lesssim \eps'\norm{\comii v^{2s+\gamma }\Lambda^{m} f}_{L^2}+  C_{\eps'} \norm{\Lambda^{m} f}_{L^2}.
\]
The above inequalities along with \reff{11060310} yield
\begin{eqnarray*}
  \norm{\com{\Lambda^m,~a}(-\tilde\triangle_v)^s f}_{L^2} \lesssim
  \eps \norm{\comii{D_\eta}^s\comii v^{s+\gamma }  \Lambda^{m} f}_{L^2} +\eps\norm{\comii v^{2s+\gamma }\Lambda^{m} f}_{L^2}+C_\eps \norm{\Lambda^{m}f}_{L^2}.
\end{eqnarray*}
Moreover by direct calculus we could verify that
  \[
      \norm{ \comii{D_\eta}^s  \comii v^{s+\gamma}  \Lambda^{m} f}_{L^2}\lesssim  \norm{\comii v^{\gamma } \comii{D_\eta}^{2s} \Lambda^{m} f}_{L^2}+
       \norm{\comii v^{2s+\gamma}  \Lambda^{m} f}_{L^2}.
  \]
Then combining the above two inequalities we have, with $\eps>0$ arbitrarily small,
\begin{eqnarray*}
  \norm{\com{\Lambda^m,~a}(-\tilde\triangle_v)^s f}_{L^2} \lesssim
  \eps \norm{\comii v^{\gamma } \comii{D_\eta}^{2s} \Lambda^{m} f}_{L^2} +\eps\norm{\comii v^{2s+\gamma }\Lambda^{m} f}_{L^2}+C_\eps \norm{\Lambda^{m}f}_{L^2}.
\end{eqnarray*}
Using the quite similar arguments as above,  we could prove as well that
\begin{eqnarray*}
  \norm{\com{\Lambda^m,~\comii{v}^\gamma}\comii{D_\eta}^{2s} f}_{L^2} \lesssim
    \norm{\comii v^{\gamma } \comii{D_\eta}^{2s} \Lambda^{m} f}_{L^2} + \norm{\comii v^{2s+\gamma }\Lambda^{m} f}_{L^2}+  \norm{\Lambda^{m}f}_{L^2}.
\end{eqnarray*}
The   treatment of other commutators can be handled quite similarly. So we only state the conclusions without proof; that is
\begin{eqnarray*}
  \norm{\com{\Lambda^m,~v\cdot\xi}  f}_{L^2}+ \norm{\com{\Lambda^m,~b}  f}_{L^2}+\norm{\com{\Lambda^m,~\comii v^{2s+\gamma}}  f}_{L^2}     \lesssim \eps \norm{\comii v^{2s+\gamma }\Lambda^{m} f}_{L^2}+C_\eps \norm{\Lambda^{m} f}_{L^2},
\end{eqnarray*}
and
\begin{eqnarray*}
  \norm{\com{\Lambda^m,~\comii v^{\frac{\gamma-2s}{2s+1}}}\comii\tau^{\frac{2s}{2s+1}}  f}_{L^2}+\norm{\com{\Lambda^m,~\comii v^{\frac{\gamma}{2s+1}} }\comii\xi^{\frac{2s}{2s+1}}  f}_{L^2} \lesssim   \norm{\comii v^{2s+\gamma }\Lambda^{m} f}_{L^2}+  \norm{\Lambda^{m} f}_{L^2}.
\end{eqnarray*}
The above four inequalities yield that
\begin{eqnarray}\label{11060301}
  \norm{\com{ \tilde{\mathcal P},~\Lambda^m}  f}_{L^2} \lesssim \eps \norm{\comii v^{\gamma } \comii{D_\eta}^{2s} \Lambda^{m} f}_{L^2} +\eps\norm{\comii v^{2s+\gamma }\Lambda^{m} f}_{L^2}+C_\eps \norm{\Lambda^{m} f}_{L^2}
\end{eqnarray}
since $\com{ \tilde{\mathcal P},~\Lambda^m}=\com{v\cdot\xi,~\Lambda^m}+\com{a,~\Lambda^m}(-\tilde\triangle_v)^s+\com{b,~\Lambda^m}$, and that
\begin{eqnarray*}
  \begin{split}
         &\norm{\Lambda^m\comii{v}^{\frac{\gamma-2s}{1+2s}}\comii{\tau}^{\frac{2s}{1+2s}} f}_{L^2 }+\norm{\Lambda^m \comii{v}^{\frac{\gamma}{1+2s}}\comii{\xi}^{\frac{2s}{1+2s}} f}_{L^2}+\norm{\Lambda^m \comii{v}^{\gamma}\comii{D_v}^{2s}f}_{L^2 }+\norm{\Lambda^m \comii{v}^{2s+\gamma}f}_{L^2 } \\
         &\lesssim \norm{\comii{v}^{\frac{\gamma-2s}{1+2s}}\comii{\tau}^{\frac{2s}{1+2s}}\Lambda^m f}_{L^2 }+\norm{ \comii{v}^{\frac{\gamma}{1+2s}}\comii{\xi}^{\frac{2s}{1+2s}}\Lambda^m f}_{L^2}+\norm{ \comii{v}^{\gamma}\comii{D_v}^{2s}\Lambda^m f}_{L^2 }\\&\quad+ \norm{  \comii{v}^{2s+\gamma} \Lambda^m f}_{L^2 } +  \norm{\Lambda^{m} f}_{L^2}.
\end{split}
\end{eqnarray*}
  As a result the conclusion in Lemma \ref{lem110603} will follow if we could show that
\begin{eqnarray}\label{11060315}
  \begin{split}
           \norm{\comii{v}^{\frac{\gamma-2s}{1+2s}}\comii{\tau}^{\frac{2s}{1+2s}}\Lambda^m f}_{L^2 }&+\norm{ \comii{v}^{\frac{\gamma}{1+2s}}\comii{\xi}^{\frac{2s}{1+2s}}\Lambda^m f}_{L^2}+\norm{ \comii{v}^{\gamma}\comii{D_v}^{2s}\Lambda^m f}_{L^2 }\\  &\qquad\qquad+\norm{  \comii{v}^{2s+\gamma} \Lambda^m f}_{L^2 }
           ~\lesssim ~  \norm{ \Lambda^m \tilde{\mathcal P}  f}_{L^2} +\norm{ \Lambda^m  f}_{L^2}.
\end{split}
\end{eqnarray}
To prove the above inequality we use  the estimate in Proposition  \ref{prp110511} to the function $\Lambda^m  f$; this gives that  the terms on the left hand side  is bounded from above by
\[
   \norm{\tilde{\mathcal  P}  \Lambda^m f}_{L^2} +\norm{ \Lambda^m  f}_{L^2}.
\]
Then  from  \reff{11060301}, it follows that
\begin{eqnarray*}
  \begin{split}
          &\norm{\comii{v}^{\frac{\gamma-2s}{1+2s}}\comii{\tau}^{\frac{2s}{1+2s}}\Lambda^m f}_{L^2 }+\norm{ \comii{v}^{\frac{\gamma}{1+2s}}\comii{\xi}^{\frac{2s}{1+2s}}\Lambda^m f}_{L^2}+\norm{ \comii{v}^{\gamma}\comii{D_v}^{2s}\Lambda^m f}_{L^2 } +\norm{  \comii{v}^{2s+\gamma} \Lambda^m f}_{L^2 }\\  &
            \lesssim   \norm{ \Lambda^m  \tilde{\mathcal P}  f}_{L^2} + \eps\norm{\comii v^{\gamma } \comii{D_\eta}^{2s} \Lambda^{m} f}_{L^2} +\eps\norm{\comii v^{2s+\gamma }\Lambda^{m} f}_{L^2}+C_\eps \norm{ \Lambda^m  f}_{L^2}.
\end{split}
\end{eqnarray*}
Letting $\eps$ small enough gives \reff{11060315}. The proof of  Lemma \ref{lem110603} is thus complete.
\end{proof}

\bigskip
\noindent{\bf Acknowledgements}
The  work was done when the author was a
Postdoctoral Fellow at the Laboratoire de Math\'{e}matiques Jean Leray ,  Universit\'{e} de Nantes,  and he wishes to thank Fr\'{e}d\'{e}ric H\'{e}rau  and Xue Ping Wang for hospitality provided.  The author gratefully acknowledges the support from the Project NONAa of France ( No. ANR-08-BLAN-0228-01),  and the NSF of China  under grant 11001207.








\end{document}